\def\C{{\mathbb{C}}}
\def\F{{\mathbb{F}}}
\def\N{{\mathbb{N}}}
\def\0{{\mathbb{O}}}
\def\R{{\mathbb{R}}}
\def\Z{{\mathbb{Z}}}
\def\cA{{\mathcal A}}
\def\cB{{\mathcal B}}
\def\cC{{\mathcal C}}
\def\cE{{\mathcal E}}
\def\cG{{\mathcal G}}
\def\cH{{\mathcal H}}
\def\cR{{\mathcal R}}
\def \supp {\hbox{Supp}}
\newcommand{\norm}[1]{{\left\|{#1}\right\|}}
\newcommand{\abs}[1]{{\left|{#1}\right|}}
\newcommand{\scal}[1]{{\left\langle{#1}\right\rangle}}
\newcommand{\set}[1]{{\left\{{#1}\right\}}}
\newcommand{\actson}{\curvearrowright}
\newcommand{\rd}{{\,\mathrm d}}
\newcommand{\Ra}{{\Rightarrow}}
\def\fig{ \centerline{Fig. \the\count200\cGlobal\advance\count200 by 1}}
\newtheorem{thm}{Theorem}[section]
\newtheorem{lem}[thm]{Lemma}
\newtheorem{prop}[thm]{Proposition}
\newtheorem*{thm*}{Theorem}
\newtheorem*{lem*}{Lemma}
\newtheorem*{cor*}{Corollary}
\theoremstyle{definition}
\newtheorem{defn}[thm]{Definition}
\newtheorem{exs}[thm]{Examples}
\theoremstyle{remark}
\newtheorem{rem}[thm]{Remark}
\newtheorem{rems}[thm]{Remarks}
\title{Amenability, exactness and weak containment property  for groupoids }
\author{Claire Anantharaman-Delaroche}
\address{Institut Denis Poisson,
Universit\'e d'Orl\'eans, Universit\'e de Tours, \newline \indent CNRS, UMR 7013,
\newline
\indent B. P. 6759, 45067 Orl\'eans Cedex 2, France}
\email{claire.anantharaman@univ-orleans.fr}
\dedicatory{Dedicated to the memory of Eberhard Kirchberg}
\subjclass[2010]{46L55,  46L05, 22A22,  43A07}
\keywords{\emph{Exactness, weak containment, amenability, $C^*$-algebras, groupoids}}
\begin{document}

\begin{abstract}
From the mid-1970s, Eberhard Kirchberg undertook a remarkable extensive study of $C^*$-algebras exactness whose applications  spread out to many branches of analysis.
In this review we focus on the case of groupoid $C^*$-algebras for which the notion of exactness needs to be better understood. In particular some versions of exactness play an important role in the study of the weak containment problem (WCP), that is whether the coincidence of the full and reduced groupoid $C^*$-algebras implies the amenability of the groupoid or not.
\end{abstract}
\maketitle

\section*{Introduction}
The study of $C^*$-algebras exactness was initiated by Kirchberg as early as the mid-1970s. In the short note  \cite{Ki77Proc} he announced several results whose proofs were published along with many other major contributions in the 1990s \cite{Ki91, Ki93, Ki94}.

An unexpected link was found at the end of the 1990s between the Novikov higher signatures conjecture for a discrete group $\Gamma$ and the exactness of its reduced $C^*$-algebra $C^*_{r}(\Gamma)$. This follows from the discovery by Higson and Roe \cite{HR} that a finitely generated group $\Gamma$ has the so-called Yu's property A \cite{Yu} if and only if $\Gamma$ admits an amenable action on a compact  space $X$ (then we say that the group is amenable at infinity). Such a group satisfies the above mentioned Novikov conjecture \cite{Yu, HR, H00} and morever $C^*_{r}(\Gamma)$ is exact since it is a sub-$C^*$-algebra of the nuclear crossed product $C(X)\rtimes \Gamma$.  At the same time extensive studies of amenable actions (and amenable groupoids) \cite{AD-R} and of   reduced group $C^*$-algebras  exactness \cite{KW99}  had just appeared. This series of results was finally  crowned by the proof of the fact that if $C_{r}^*(\Gamma)$ is exact then $\Gamma$ admits an amenable action on a compact  space \cite{GK, Oza, AD02}. All this raised a renewed interest about 
various notions of exactness for locally compact groups.

It turned out to be potentially interesting to extend these notions to the case of locally compact groupoids. A first attempt was presented in \cite{AD00}. A detailed presentation was made available in \cite{AD16}. Nowadays, some  versions of groupoid exactness appear to be essential in the study of the weak containment problem (WCP).

The purpose of this paper is to describe some of the history of this subject. It is organized as follows.
 In section 1, after pointing out the fact, due to Kirchberg \cite{Ki93}, that many full group $C^*$-algebras are not exact, we focus on exactness of reduced group $C^*$-algebras, compared with two other notions of exactness, namely KW-exactness introduced by Kirchberg and Wassermann in \cite{KW99}, and amenability at infinity. The three notions are equivalent for discrete groups and we describe what is known in general for locally compact groups. 

In Section 2 we recall some facts about measured and locally compact groupoids and their operator algebras and Section 3 provides a short summary about amenable groupoids. In Section 4 we introduce the notion of amenability at infinity for a locally compact groupoid. Here, compact spaces have to be replaced by locally compact spaces that are fibred on the space of units of the groupoid in such a way that the projection is a proper map. When the groupoid $\cG$ is \'etale, it has a universal fibrewise compactification $\beta_r\cG$, called its Stone-\v Cech fibrewise compactification. Then, amenability at infinity of $\cG$ is equivalent to the fact that the canonical action of $\cG$ on $\beta_r\cG$ is amenable, and can be expressed in terms of positive type kernels, exactly as for groups.

In Section 5 we describe the relations between the various notions of exactness that are defined for \'etale groupoids. They are equivalent when we assume the inner amenability of the groupoid.  Inner amenability is well understood in the group case, in particular all discrete groups are inner amenable in our sense, but this notion remains mysterious for groupoids.  In this section we also introduce a  weak notion of exactness for groupoids, that we call inner exactness. It is automatically fulfilled for transitive groupoids and in particular for all locally compact groups, but it has proven useful in other contexts. 

From 2014 many remarkable results have been obtained in the study of the (WCP) for groupoids, which crucially involve exactness. We review them in Section 6. Finally,
in the last section we recap some open problems.

\section{Exact group $C^*$-algebras}

Unlike the functor $(\cdot)\otimes_{\hbox {max}}  A$ (maximal tensor product with the $C^*$-algebra $A$), the minimal tensor product functor $(\cdot)\otimes A$ is not necessarily exact, that is, given a short exact sequence of $C^*$-algebras $0 \rightarrow I \rightarrow B \rightarrow B/I \rightarrow 0,$
the sequence
$$0 \rightarrow I\otimes A \rightarrow B\otimes A \rightarrow (B/I) \otimes A\rightarrow 0$$
is not always exact in the middle. When the functor $(\cdot)\otimes A$ is exact, one says that the $C^*$-algebra $A$ is {\it exact}.  This notion, so named in the pioneering paper \cite{Ki77Proc}, has been the subject of Kirchberg's major contributions from the end of the 1980s. As early as 1976, Simon Wassermann showed in \cite{Wass76} that the full $C^*$-algebra
$C^*(\F_2)$ of the free group $\F_2$ on two generators is not exact. In fact, when $\Gamma$ is a finitely generated residually finite group, the full group $C^*$-algebra $C^*(\Gamma)$ is exact if and only if the group $\Gamma$ is amenable\footnote{To the author's knowledge, there is so far no example of a non-amenable discrete group $G$ such  that $C^*(G)$ is exact.}(see \cite{Ki77Proc}, and \cite[Proposition 7.1]{Ki93} for a more general result). 

This is in sharp contrast with the behaviour of the reduced group $C^*$-algebras. For instance, let $G$ be a locally compact group having a closed amenable subgroup $P$ with $G/P$ compact and let $H$ be any closed subgroup of $G$. Then the full crossed product $C^*$-algebras $C(G/P)\rtimes H$ and $C_0(H\setminus G)\rtimes P$ are Morita equivalent \cite{Rie76} and $C_0(H\setminus G)\rtimes P$ is nuclear since $P$ is amenable. It follows that $C(G/P)\rtimes H$ is nuclear and that the reduced crossed product $C^*$-algebra $C(G/P)\rtimes_r H$ is  nuclear too. But the reduced group $C^*$-algebra $C^*_{r}(H)$ embeds into $C(G/P)\rtimes_r H$ since $G/P$ is compact and therefore $C^*_{r}(H)$ is an exact $C^*$-algebra. This well-known argument applies for instance to closed subgroups of almost connected groups.

A locally compact group is said to be $C^*$-{\em  exact}\footnote{This differs from the terminology used in \cite{Ki77Proc} where a group was called $C^*$-exact if its full $C^*$-algebra was exact.} if its reduced group $C^*$-algebra is $C^*$-exact. Most familiar groups are known to be exact. The first examples of discrete groups that are not $C^*$-exact are Gromov monsters \cite{Gro}. Osajda has given other examples \cite{Osa}, and he even built residually finite groups that are not $C^*$-exact \cite{Osa18}.

Clearly, an easy way to show that the reduced group $C^*$-algebra $C^*_{r}(G)$ of a locally compact group $G$ is exact is to exhibit a continuous action of $G$ on a compact space $X$ such that the reduced crossed product $C(X)\rtimes_r G$ is nuclear. When this property is fulfilled with $G$ a discrete group, it follows that the $G$-action on $X$ is (topologically) amenable \cite[Theorem 4.5]{AD87}, \cite[Theorem 5.8]{AD02}. This notion plays an important role the study of reduced group $C^*$-algebras exactness. It will be recalled in the subsequent sections in the more general context of groupoids. It is called {\it amenability at infinity} \cite[Definition 5.2.1]{AD-R} (or boundary amenability).

When a locally compact group $G$ has an amenable action on a compact space, it has a property stronger than $C^*$-exactness, that was introduced by Kirchberg and Wassermann \cite{KW99}, and is now often called KW-exactness (see for instance \cite[Definition 5.1.9]{BO}).

\begin{defn}\label{KWexact:group} A locally compact group $G$ is KW-{\it exact} if  the functor $A \mapsto A\rtimes_r G$ is exact, that is, for every short exact sequence of $G$-$C^*$-algebras $0 \rightarrow I  \rightarrow A \rightarrow A/I \rightarrow 0$, the sequence
$$0 \rightarrow I \rtimes_r G \rightarrow A\rtimes_rG \rightarrow(A/I)\rtimes_r G \rightarrow 0$$
is also exact.
\end{defn}

The  theorem below, which presents the currently known relations between the different definitions of exactness for a locally compact group involves in particular the notion of {\em inner amenability}.  Following \cite[page 84]{Pat88}, we say that a locally compact group $G$ is {\it inner amenable} if there exists an inner invariant mean  on $L^\infty(G)$, that is, a state $m$ such that $m(sfs^{-1}) = m(f)$ for every $f\in L^\infty(G)$ and $s\in G$, where $(sfs^{-1})(y) = f(s^{-1}ys)$. This is a quite weak notion (that should deserve in fact the name of weak inner amenablity) since for instance every discrete group is inner amenable in this sense (whereas Effros \cite{Effros} excludes the trivial inner invariant mean in his definition). Note that a locally compact group $G$ is amenable  if and only if $G$ is inner amenable and $C^*_{r}(G)$ is nuclear \cite{LP}. 

The importance of inner amenability when studying the relations between properties of groups and their $C^*$-algebras has also been highlighted by Kirchberg in \cite[\S 7]{Ki93} where inner amenability is called Property (Z).

\begin{thm}\label{equivexact:group} Let $G$ be a locally compact group and consider the following conditions:
\begin{itemize}
\item[(1)] $G$ has an amenable action on a compact space;
\item[(2)] $G$ is KW-exact;
\item[(3)] $G$ is $C^*$-exact.
\end{itemize}
Then (1) $\Leftrightarrow$ (2) $\Ra$ (3) and the three conditions are equivalent when $G$ is an inner amenable group  or when $C^*_{r}(G)$ has a tracial state.
\end{thm}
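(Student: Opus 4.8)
The equivalence (1)$\,\Leftrightarrow\,$(2) and the implication (2)$\,\Rightarrow\,$(3) are valid for an arbitrary locally compact group, and I would dispose of them first; the substance of the last assertion is the converse (3)$\,\Rightarrow\,$(1) under the inner amenability (or tracial state) hypothesis, which is where the difficulty is concentrated. For (1)$\,\Rightarrow\,$(2) the decisive input is the theorem that if $G$ acts amenably on a compact space $X$, then the full and reduced crossed products $B\rtimes G$ and $B\rtimes_r G$ coincide for \emph{every} $G$-$C^*$-algebra $B$ which is a $C_0(X)$-algebra. Starting from a short exact sequence $0\to I\to A\to A/I\to 0$ of $G$-$C^*$-algebras, I would tensor it with $C(X)$ — this preserves exactness, $C(X)$ being nuclear — obtaining an exact sequence of $C_0(X)$-algebras, apply the always-exact full crossed product functor, and use the coincidence just quoted to conclude that $0\to (C(X)\otimes I)\rtimes_r G\to (C(X)\otimes A)\rtimes_r G\to (C(X)\otimes A/I)\rtimes_r G\to 0$ is exact. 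It then remains to descend back to $A=\C 1\otimes A$: the unital inclusion $\C\hookrightarrow C(X)$ yields $G$-equivariant embeddings $B\hookrightarrow C(X)\otimes B$, hence faithful inclusions $B\rtimes_r G\hookrightarrow (C(X)\otimes B)\rtimes_r G$, and a diagram chase computing Fourier coefficients with the canonical conditional expectations $(C(X)\otimes B)\rtimes_r G\to C(X)\otimes B$, together with the identity $(C(X)\otimes I)\cap(\C 1\otimes A)=\C 1\otimes I$, pins down the kernel of $A\rtimes_r G\to (A/I)\rtimes_r G$ as exactly $I\rtimes_r G$.

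For (2)$\,\Rightarrow\,$(1) I would produce the amenable action directly, adapting the discrete-group argument of Kirchberg and Wassermann \cite{KW99} (see also \cite[\S5.1]{BO}). The idea is to apply KW-exactness to a test short exact sequence of $G$-$C^*$-algebras of the form $0\to C_0(G)\to C_b^{\mathrm{ru}}(G)\to C_b^{\mathrm{ru}}(G)/C_0(G)\to 0$, where $C_b^{\mathrm{ru}}(G)$ is the algebra of bounded right-uniformly-continuous functions on $G$ (equal to $\ell^\infty(G)=C(\beta G)$ when $G$ is discrete, its spectrum playing the role of a $G$-compactification adapted to the non-discrete setting). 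Since $C_0(G)\rtimes_r G\cong\mathcal B(L^2(G))$'s ideal $\mathcal K(L^2(G))$ and the full crossed product functor is exact, exactness of the corresponding reduced-crossed-product sequence — which KW-exactness grants — forces, by a slice-map comparison between full and reduced crossed products, the canonical action of $G$ on the spectrum of $C_b^{\mathrm{ru}}(G)$ to be amenable; restricting to a $G$-invariant compact subset (or passing through the positive-type-kernel description of amenability at infinity developed later in the paper) then yields an amenable action on a compact space. The implication (2)$\,\Rightarrow\,$(3) is immediate: equip an arbitrary short exact sequence $0\to I\to A\to A/I\to 0$ of $C^*$-algebras with the trivial $G$-action; since $A\otimes C^*_r(G)\cong A\rtimes_r G$ canonically for trivial actions, KW-exactness yields exactness of $0\to I\otimes C^*_r(G)\to A\otimes C^*_r(G)\to (A/I)\otimes C^*_r(G)\to 0$, that is, $C^*_r(G)$ is exact.

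The delicate implication is (3)$\,\Rightarrow\,$(1) when $G$ is inner amenable or $C^*_r(G)$ has a tracial state; for discrete groups this is exactly the Guentner–Kaminker/Ozawa/Anantharaman-Delaroche theorem \cite{GK,Oza,AD02} recalled in the introduction, and since every discrete group is inner amenable in the present sense and carries the canonical trace $\lambda_s\mapsto\delta_{s,e}$ on $C^*_r(G)$, the statement genuinely extends it. The strategy is to use the extra hypothesis to promote exactness of $C^*_r(G)$ — equivalently, by Kirchberg's characterisation, nuclearity of the embedding $C^*_r(G)\hookrightarrow\mathcal B(L^2(G))$ realised by nets of completely positive contractive factorisations through matrix algebras — to KW-exactness. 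Concretely, the obstruction to exactness of a reduced-crossed-product sequence can be located in a second-dual/weak-$*$ completion, and an inner invariant mean on $L^\infty(G)$ (respectively a tracial state on $C^*_r(G)$) supplies a $G$-equivariant averaging map that, combined with local reflexivity, lets one transport the nuclear embedding of $C^*_r(G)$ to finite-rank approximations witnessing exactness of the crossed-product functor. I expect this averaging step — forcing the mean, or the trace, to interact correctly with the non-equivariant approximating maps — to be the main obstacle, and it is precisely where the hypothesis enters, in parallel with the role of inner amenability in the equivalence ``$G$ amenable $\Leftrightarrow$ $G$ inner amenable and $C^*_r(G)$ nuclear'' \cite{LP}.
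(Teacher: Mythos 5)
Your plan for (1) $\Ra$ (2) $\Ra$ (3) follows the standard route that the paper itself merely cites (\cite[Theorem 7.2]{AD02}), but one tool you invoke does not exist in the stated generality: for a non-discrete locally compact group there is \emph{no} canonical conditional expectation $(C(X)\otimes B)\rtimes_r G\to C(X)\otimes B$, so the ``Fourier coefficient'' diagram chase in your descent step cannot be run as written. The standard repair is to replace it by the slice maps $E_\varphi$ associated with elements $\varphi$ of the Fourier algebra $A(G)$ (or with compactly supported positive definite functions), which do exist on reduced crossed products of locally compact groups and separate points; with that substitution the identification of $\ker\bigl(A\rtimes_r G\to (A/I)\rtimes_r G\bigr)$ with $I\rtimes_r G$ goes through. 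Your sketch of (2) $\Ra$ (1) correctly identifies the right test object ($C_b^{\mathrm{ru}}(G)$ in place of $\ell^\infty(G)$, whose spectrum is already compact, so no further restriction is needed), but the passage ``a slice-map comparison forces the action on the spectrum to be amenable'' is where the entire content of \cite[Theorem 5.6]{BCL} and \cite[Proposition 2.5]{OS20} lives; as written it is an assertion, not an argument.

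The genuine gap is in the final implications, which are the substance of the theorem. For the inner amenable case the actual mechanism (from \cite[Theorem 7.3]{AD02}, combined with \cite{CT}, and reproduced in groupoid form in the proof of Theorem \ref{thm:equiv} of this paper) is \emph{not} to promote $C^*$-exactness to KW-exactness by averaging: one uses Kirchberg's characterisation to get a nuclear embedding $C^*_r(G)\hookrightarrow \cB(L^2(G))$, approximates it by factorable completely positive maps with \emph{compact support}, and pairs these with the properly supported positive definite functions on $G\times G$ furnished by inner amenability to manufacture positive definite kernels supported in tubes converging to $1$ on tubes --- which is precisely condition (1) via Theorem \ref{caractinfgroup}. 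Your ``$G$-equivariant averaging map combined with local reflexivity'' does not produce the tube-supported kernels, and no argument is offered for why it would. For the tracial case the mechanism is different again and your sketch misidentifies it: by \cite{KR} the existence of a tracial state on $C^*_r(G)$ is equivalent to the existence of an open amenable normal subgroup $N$, so that $G/N$ is discrete and one reduces (3) $\Ra$ (2) to the discrete case of \cite[Theorem 5.2]{KW99}; this structural reduction, due to \cite{Man}, is not an averaging argument on approximating maps. So while your outline of the easy implications is serviceable after the conditional-expectation fix, the proof of the theorem's main assertion is missing.
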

 
 That (1) $\Ra$ (2) $\Ra$ (3) is easy (see for instance \cite[Theorem 7.2]{AD02}). When $G$ is a discrete group the equivalence between (2) and (3) is proved  in \cite [Theorem 5.2]{KW99} and the fact that (3) implies (1) is proved in \cite{Oza}. That  (2) implies (1) for any locally compact group is proved in \cite[Theorem 5.6]{BCL}, \cite[Proposition 2.5]{OS20}. The fact that (3) implies (1)   in the case of an inner amenable locally compact group $G$ was treated in \cite[Theorem 7.3]{AD02} where we used a property of $G$ that we called Property (W). Subsequently, it was proved in \cite{CT} that this property (W) is the same as inner amenability.  The  fact that (3) implies (2) when $C^*_{r}(G)$ has a tracial state is proved in \cite{Man}. Let us point out that this latter property is equivalent  to the existence of an open amenable normal subgroup  in $G$ as shown in \cite{KR}.  
 
 Whether  (3) implies (2) holds  for any locally compact group is still open.   Note that if KW-exactness and $C^*$-exactness are equivalent for all unimodular totally
disconnected second countable groups then they are equivalent for all locally compact second
countable groups \cite{CZ}.

\section{Background on groupoids} 
We assume that the reader is familiar with the basic definitions about groupoids. We use the terminology and the notation of \cite{AD-R}. The unit space of a groupoid $\cG$ is denoted by $\cG^{(0)}$ and is often renamed as $X$. We implicitly identify $\cG^{(0)}$ to a subset of $\cG$. The structure of $\cG$ is defined by the range and source maps $r,s: \cG\to\cG^{(0)}$, the inverse map $\gamma \mapsto \gamma^{-1}$ from $\cG$ to $\cG$ and the multiplication map $(\gamma,\gamma') \mapsto \gamma\gamma'$ from 
$\cG^{(2)} = \set{(\gamma,\gamma')\in \cG\times \cG : s(\gamma) = r(\gamma')}$ to $\cG$.  For $x\in \cG^{(0)}$ we set $\cG^x = r^{-1}(x)$,  $\cG_x = s^{-1}(x)$ and $\cG(x) = \cG^x \cap \cG_x$.  Given $E\subset \cG^{(0)}$, we write $\cG(E) = r^{-1}(E)\cap s^{-1}(E)$. 

One important example is given by the left action of a group $G$ on a set $X$. The corresponding {\em semidirect product groupoid} $\cG = X\rtimes G$ is $X\times G$ as a set. Its unit set is $X$, the range and source maps are given respectively by $r(x,g) = x$ and $s(x,g)= g^{-1}x$. The product is given by $(x,g)(g^{-1}x,h) = (x,gh)$, and the inverse by $(x,g)^{-1} = (g^{-1}x, g^{-1})$. Equivalence relations on $X$ are also an interesting family of examples. If $\cR\subset X\times X$ is an equivalence relation, it is viewed as a groupoid with $X$ as set of units, $r(x,y) = x$ and $s(x,y) = y$ as range and source maps respectively. The product is given by $(x,y)(y,z) = (x,z)$ and the inverse by $(x,y)^{-1} = (y,x)$.

\subsection{Measured groupoids} A {\em Borel groupoid} $\cG$ is a groupoid endowed with a Borel structure such that the range, source, inverse and product maps are Borel, where $\cG^{(2)}$ has the Borel structure induced by $\cG\times \cG$ and $\cG^{(0)}$ has the Borel structure induced by $\cG^{(0)}$. A {\em Borel Haar system} $\lambda$ on $\cG$ is a family $(\lambda^x)_{x\in \cG^{(0)}}$ of measures on the fibres $\cG^x$, wich is Borel (in the sense that for every non-negative Borel function $f$ on $\cG$ the function $x\mapsto \lambda(f)(x) =\int f\rd\lambda^x$ is Borel), left invariant  (in the sense that for all $\gamma\in\cG, \quad\gamma\lambda^{s(\gamma)} = \lambda^{r(\gamma)}$), proper (in the sense that there exists a non-negative Borel function $f$ on $\cG$ such that $ \lambda(f)(x) = 1$ for all $x\in \cG^{(0)}$). Given a measure $\mu$ on $\cG^{(0)}$, one can integrate  the measures $\lambda^x$ with respect to $\mu$ to get a measure $\mu\circ\lambda$ on $\cG$. The measure $\mu$ is quasi-invariant with respect to the Haar system if the inverse map preserves the $(\mu\circ\lambda)$-negligible sets. A {\em measured groupoid} is a triple $(\cG,\lambda,\mu)$ satisfying the above properties. All measure spaces are assumed to be standard and the measures are $\sigma$-finite. 

\begin{exs}\label{exs:mesgroupoid} (a) {\it Semidirect product measured groupoids.} Let $G$ be a second countable locally compact group with a left Haar measure $\lambda$, and $X$ a standard Borel space.  A Borel left action of $G$ on $X$ is a left action such that the map $(x,s)\mapsto sx$ from $X\times G$ to $X$ is Borel. Then $\cG=X\rtimes G$ is a Borel groupoid with a canonical Haar system, also denoted by $\lambda$. Indeed, identifying $\cG^x$ with $G$, we take $\lambda^x = \lambda$. Let $\mu$ be a measure on $X$. Then $\mu\circ\lambda = \mu\otimes\lambda$.  Moreover $\mu$ is quasi-invariant with respect to the $G$-action if and only if $(\cG,\lambda,\mu)$ is a measured groupoid. 

(b) {\em Discrete measured equivalence relations.} Let $\cR$ be an equivalence relation on a Borel standard space $X$ which has countable equivalence classes and such that $\cR$ is a Borel subset of $X\times X$.
This groupoid has a canonical Haar system: $\lambda^x$ is the counting measure on the equivalence class of $x$, identified with $\cR^x$. A measure $\mu$ on $X$ is quasi-invariant if for every Borel subset $A\subset X$, the saturation of $A$ with respect to $\cR$ has measure $0$ when $\mu(A) = 0$. Then $(\cR,\mu)$ is a measured groupoid,  called a {\em discrete measured equivalence relation}.
\end{exs}

\subsection{Topological groupoids}
A {\it locally compact groupoid} is a groupoid $\cG$ equipped with a  locally compact\footnote{By convention a locally compact space will be Hausdorff.} topology such  that the structure maps are continuous, where $\cG^{(2)}$ has the topology induced by $\cG\times\cG$ and $\cG^{(0)}$ has the topology induced by $\cG$. A {\em continuous Haar system} is a family $ \lambda=(\lambda^x)_{x\in \cG^{(0)}}$ of measures on $\cG$ such that  $\lambda^x$ has exactly $\cG^x$ as support for every $x\in \cG^{(0)}$, is left invariant and is continuous in the sense that for every $f\in \cC_c(\cG)$ (the space of  continuous complex valued functions with compact support on $\cG$) the function $x\mapsto  \lambda(f)(x)= \int f\rd\lambda^x$ is continuous. Note that the existence of a continuous Haar system implies that the range (and therefore the source) map is open \cite[Chap. I, Proposition 2.4]{Ren_book}.

\begin{exs}\label{exs:groupoids} (a) {\it Semidirect products.} Let us consider   a locally compact group $G$ with Haar measure $\lambda$ acting continuously to the left on a locally compact space $X$. Then $\cG = X\rtimes G$ is locally compact groupoid and the Haar system defined in Example \ref{exs:mesgroupoid}  (a) is continuous.

(b) {\it  Group bundle groupoids.} A group bundle groupoid is a locally compact groupoid such that the range and source maps are equal and open. By \cite[Lemma 1.3]{Ren91}, one can choose, for $x\in \cG^{(0)}$, a left Haar measure $\lambda^x$ on the group $\cG^x =\cG_x$ in such a way that $(\lambda^x)_{x\in X}$ forms a Haar system on $\cG$. An explicit example will be given in Section \ref{sec:HLS}.

(c) {\it \'Etale groupoids.} A locally compact groupoid is called {\it \'etale} when its range (and therefore its source) map is a local homeomorphism from $\cG$ into $\cG^{(0)}$. Then $\cG^x$ and $\cG_x$ are discrete and $\cG^{(0)}$ is open in $\cG$. Moreover the family of counting measures $\lambda^x$ on $\cG^x$ forms a Haar system (see \cite[Chap. I, Proposition  2.8]{Ren_book}). It will be implicitly our choice of Haar system. Groupoids associated with actions of discrete groups are \'etale.
\end{exs}

\subsection{Groupoid operator algebras}  For the representation theory of measured groupoids we refer to \cite[\S 6.1]{AD-R}. The von Neumann algebra $VN(\cG,\lambda,\mu)$ associated to such a groupoid is defined by its left regular representation. For a semidirect product $(X\rtimes G, \mu)$ it is the von Neumann crossed product  $L^\infty(X,\mu)\rtimes G$. For a discrete measured equivalence relation $(\cR,\mu)$, it is the von Neumann algebra defined in \cite{FMII}.

We will now focus on the operator algebras associated with a locally compact groupoid\footnote{Throughout this text a locally compact groupoid will be implicitly  endowed with a Haar system $\lambda$ which, concerning the examples given in Examples \ref{exs:groupoids}, will be the Haar systems described  there.} $\cG$. We set $X = \cG^{(0)}$. The space $\cC_c(\cG)$ of continuous functions with compact support on $\cG$ is an involutive algebra with respect to the following operations for $f,g\in \cC_c(\cG)$:
\begin{align*}
(f*g)(\gamma) &= \int f(\gamma_1)g(\gamma_{1}^{-1}\gamma) d\lambda^{r(\gamma)}(\gamma_1)\\
f^*(\gamma) & =\overline{f(\gamma^{-1})}. 
\end{align*}

We define a norm on $\cC_c(\cG)$ by
$$\norm{f}_I = \max \set{\sup_{x\in X} \int \abs{f(\gamma)}\rd\lambda^x(\gamma), \,\,\sup_{x\in X} \int \abs{f(\gamma^{-1})}\rd\lambda^x(\gamma)}.$$

The {\it full $C^*$-algebra $C^*(\cG)$ of the groupoid} $\cG$ is the enveloping $C^*$-algebra of the Banach $*$-algebra obtained by completion of $\cC_c(\cG)$ with respect to the norm $\norm{\cdot}_I$.

In order to define the reduced $C^*$-algebra of $\cG$ we need the notion of (right) Hilbert   $C^*$-module $\cH$ over a $C^*$-algebra $A$ (or Hilbert $A$-module) for which we refer to \cite{Lance_book}. We shall denote by  $\cB_A(\cH)$ the $C^*$-algebra of $A$-linear adjointable maps from $\cH$ into itself.

Let $\cE$ be the Hilbert $C^*$-module\footnote{When $\cG$ is \'etale, we shall use the notation $\ell^2_{\cC_0(X)}(\cG)$ rather than  $L^2_{\cC_0(X)}(\cG,\lambda)$ } $L^2_{\cC_0(X)}(\cG,\lambda)$ over  ${\mathcal C}_0(X)$ (the algebra of continuous functions on $X$ vanishing to $0$ at infinity)  obtained by completion of $\cC_c(\cG)$ with respect to the $\cC_0(X)$-valued inner product
$$\scal{\xi,\eta}(x) = \int_{\cG^x} \overline{\xi(\gamma)}\eta(\gamma)\rd\lambda^x(\gamma).$$
The $\cC_0(X)$-module structure is given by
$$(\xi f)(\gamma) = \xi(\gamma)f\circ r(\gamma).$$
Let us observe that $L^2_{\cC_0(X)}(\cG,\lambda)$ is the space of continuous sections vanishing at infinity of a continuous field of Hilbert spaces with fibre $L^2(\cG^x,\lambda^x)$ at $x\in X$.

We let $\cC_c(\cG)$ act on $\cE$ by the formula
$$(\Lambda(f)\xi)(\gamma) = \int f(\gamma^{-1}\gamma_1) \xi(\gamma_1) \rd\lambda^{r(\gamma)}(\gamma_1).$$
Then, $\Lambda$ extends to a representation of $C^*(\cG)$ in the Hilbert $\cC_0(X)$-module $\cE$, called the {\it regular representation of} $(\cG,\lambda)$. Its range is denoted by $C^*_{r}(\cG)$ and called the {\it reduced $C^*$-algebra}\footnote{Very often, the Hilbert $\cC_0(X)$-module $L^2_{\cC_0(X)}(\cG,\lambda^{-1})$ is considered in order to define the reduced $C^*$-algebra (see for instance \cite{KS02,KS04}). We pass from this setting to ours (which we think to be more convenient for our purpose) by considering the isomorphism $U: L^2_{\cC_0(X)}(\cG,\lambda^{-1})\to L^2_{\cC_0(X)}(\cG,\lambda)$ such that $(U\xi)(\gamma) = \xi(\gamma^{-1})$.}{\it of the groupoid} $\cG$. Note that $\Lambda(C^*(\cG))$ acts fibrewise on the corresponding continuous field of Hilbert spaces with fibres $L^2(\cG^x,\lambda^x)$ by the formula
$$(\Lambda_x(f)\xi)(\gamma) = \int_{\cG^x} f(\gamma^{-1}\gamma_1) \xi(\gamma_1) \rd\lambda^x(\gamma_1)$$
for $f\in \cC_c(\cG)$ and $\xi\in L^2(\cG^x,\lambda^x)$. Moreover, we have
$\norm{\Lambda(f)} = \sup_{x\in X} \norm{\Lambda_x(f)}$.

For a semidirect product groupoid $\cG = X\rtimes G$ as in Example \ref{exs:groupoids} (a)  we get the usual crossed products $C^*(\cG) = C_0(X)\rtimes G$ and $C^*_{r}(\cG) = C_0(X)\rtimes_r G$. 

\section{Amenable groupoids}

\subsection{Amenability of measured groupoids} The existence of actions of non-amenable groups exhibiting behaviours reminiscent of amenability had already  been observed in the 1970s by several authors, among them Vershik \cite{Ver} for the boundary action of $PSL(2,\Z)$. The original definition of an amenable action in the measured setting is due to Zimmer \cite[Definition 1.4]{Zi3}. It was expressed in terms of an involved  fixed point property. Later \cite{Zi1} it was reformulated in terms of invariant means: an action of a discrete group $\Gamma$ on a measured space $(X,\mu)$, with $\mu$ being quasi-invariant, is amenable if there exists a norm one projection  $m: L^\infty(X\rtimes \Gamma, \mu\circ\lambda)\to L^\infty(X,\mu)$ such that $s.(m(f)) = m(s.f)$ for all $f\in L^\infty(X\rtimes \Gamma, \mu\circ\lambda)$ and $s\in \Gamma$ where $(s.f)(x,t) = f(s^{-1}x,s^{-1}t)$ and $(s.m(f))(x) = m(f)(s^{-1}x)$. In \cite{AEG}, this characterization was extended to the case of any second countable locally compact group. It also holds in the case of discrete measured equivalence relations. Clearly, the right framework that unifies this notion of amenability is that of measured groupoids.

\begin{defn}\label{amen:measgroupoid}\cite[Definition 3.2.8]{AD-R} A measured groupoid $(\cG,\lambda,\mu)$ is said to be {\it amenable} if there exists a norm one projection $m: L^\infty(\cG,\mu\circ\lambda) \to L^\infty(\cG^{(0)},\mu)$ such that  $m(\psi*f) = \psi* m(f)$  for every $f\in  L^\infty(\cG,\mu\circ\lambda)$ and every Borel function $\psi$ on $\cG$ such that $\sup_{x\in \cG^{(0)}}\lambda^x(\abs{\psi} )<\infty$.
\end{defn}

Recall that $(\psi*f)(\gamma) = \int \psi(\eta)f(\eta^{-1}\gamma) \rd \lambda^{r(\gamma)}(\eta)$ for $f\in  L^\infty(\cG,\mu\circ\lambda)$ and that we have $(\psi*f)(x) = \int \psi(\eta)f(\eta^{-1}x) \rd \lambda^{x}(\eta)$ for $f\in L^\infty(X,\mu)$.

The first definition of amenability for a measured groupoid is due to Renault \cite[Chap. II, \S 3]{Ren_book}. It was expressed  in different terms:  as a generalisation of the classical Day condition or equivalently  as generalisations of the Reiter condition or of the Godement condition for groups.

\begin{thm}\label{thm:caractAmenMeas}\cite[Propostion 3.2.14]{AD-R} Let $(\cG,\lambda,\mu)$ be a measured groupoid. We endow $\cG$ with the measure $\mu\circ\lambda$. The following conditions are equivalent:
\begin{itemize}
\item[(i)] $(\cG,\lambda,\mu)$ is amenable;
\item[(ii)] $[$Weak Day condition$]$ There exists a sequence $(g_n)$ of non-negative Borel functions on $\cG$ such that $\lambda(g_n) =1$ and  $\lim_n f*g_n - (\lambda(f)\circ r )g_n = 0$ in the weak topology of $L^1(\cG)$ for all  $f\in L^1(\cG)$;
 \item[(iii)] $[$Weak Reiter condition$]$ There exists a sequence $(g_n)$ of non-negative Borel functions on $\cG$ such that $\lambda(g_n) =1$ and $\lim_n \int\abs{g_n(\gamma^{-1}\gamma_1) - g_n(\gamma_1)}\rd \lambda^{r(\gamma)}(\gamma_1) = 0$ in the weak*-topology of $L^\infty(\cG)$;
 \item[(iv)] $[$Weak Godement condition$]$  There exists a sequence $(\xi_n)$ of Borel functions on $\cG$ such that $\lambda(\abs{\xi_n}^2) = 1$ for all $n$ and $\lim_n \int\overline{\xi_n(\gamma_1)}\xi_n(\gamma^{-1}\gamma_1) \rd\lambda^{r(\gamma)}(\gamma_1) = 1$ in the weak*-topology of $L^\infty(\cG)$.
\end{itemize}
\end{thm}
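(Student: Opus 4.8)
The plan is to prove the implications $(iii)\Leftrightarrow(iv)$, $(iii)\Ra(ii)$, $(iv)\Ra(i)$, and finally $(i)\Ra(iii)$ and $(ii)\Ra(iii)$; once all of these are in hand the diagram closes, since $(i)\Ra(iii)\Leftrightarrow(iv)\Ra(i)$ and $(iii)\Ra(ii)\Ra(iii)$. The first three are essentially computational, while the last two --- where one must manufacture an honest approximating sequence out of weak or abstract data --- carry the real content. Throughout I would use freely that left invariance of $\lambda$ gives $\int h(\gamma^{-1}\gamma_1)\rd\lambda^{r(\gamma)}(\gamma_1)=\int h(\gamma_1)\rd\lambda^{s(\gamma)}(\gamma_1)$, and I would reduce to $\mu(\cG^{(0)})=1$ (harmless, since the four conditions depend only on the measure class of $\mu$): then every Borel $g$ with $\lambda(g)=1$ lies in $L^1(\cG,\mu\circ\lambda)$, which moreover is separable.

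For $(iii)\Leftrightarrow(iv)$ one passes between $g_n$ and $\xi_n$ via $\xi_n=g_n^{1/2}$, $g_n=\abs{\xi_n}^2$; left invariance yields $1-\mathrm{Re}\int\overline{\xi_n(\gamma_1)}\xi_n(\gamma^{-1}\gamma_1)\rd\lambda^{r(\gamma)}(\gamma_1)=\tfrac12\int\abs{\xi_n(\gamma_1)-\xi_n(\gamma^{-1}\gamma_1)}^2\rd\lambda^{r(\gamma)}(\gamma_1)$, and the pointwise inequalities $\abs{a^{1/2}-b^{1/2}}^2\le\abs{a-b}\le\abs{a^{1/2}-b^{1/2}}(a^{1/2}+b^{1/2})$ for $a,b\ge0$, Cauchy--Schwarz, and the elementary fact that $0\le h_n\le C$ with $h_n\to0$ in $\sigma(L^\infty,L^1)$ forces $h_n^{1/2}\to0$ in $\sigma(L^\infty,L^1)$ (estimate $\int h_n^{1/2}\phi$ by Cauchy--Schwarz for $\phi\ge0$) finish both directions. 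For $(iii)\Ra(ii)$ one writes $(f*g_n-(\lambda(f)\circ r)g_n)(\gamma)=\int f(\gamma_1)(g_n(\gamma_1^{-1}\gamma)-g_n(\gamma))\rd\lambda^{r(\gamma)}(\gamma_1)$ and, by Fubini, $\norm{f*g_n-(\lambda(f)\circ r)g_n}_{L^1(\mu\circ\lambda)}\le\int_\cG\abs f\,R_n\,\rd(\mu\circ\lambda)$ with $R_n$ the Reiter function of $(iii)$; since $\abs f\in L^1$ and $R_n\to0$ weak$^*$ this tends to $0$, giving even norm convergence.

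For $(iv)\Ra(i)$ I would set $g_n=\abs{\xi_n}^2$, define $m_n(f)(x)=\int_{\cG^x}f\,g_n\,\rd\lambda^x$ (positive contractions restricting to the identity on $L^\infty(\cG^{(0)})$, hence norm-one projections), and let $m$ be a cluster point of $(m_n)$ for the point-weak$^*$ topology on the unit ball of $\cB\bigl(L^\infty(\cG),L^\infty(\cG^{(0)})\bigr)$, which is compact by Banach--Alaoglu; $m$ is again a norm-one projection onto $L^\infty(\cG^{(0)})$, and only $m(\psi*f)=\psi*m(f)$ needs checking. One verifies by Fubini and quasi-invariance of $\mu$ that $\psi\ast$ has a bounded pre-adjoint on $L^1(\cG^{(0)})$ (built from $\psi$ and the modular function of $(\cG,\lambda,\mu)$), so it suffices to show $\scal{m_n(\psi*f)-\psi*m_n(f),F}\to0$ for every $F\in L^1(\cG^{(0)})$. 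A change of variables gives $m_n(\psi*f)(x)-(\psi*m_n(f))(x)=\int_{\cG^x}\psi(\eta)\bigl(\int_{\cG^{s(\eta)}}f(\zeta)(g_n(\eta\zeta)-g_n(\zeta))\rd\lambda^{s(\eta)}(\zeta)\bigr)\rd\lambda^x(\eta)$; writing $g_n(\eta\zeta)-g_n(\zeta)=\abs{\xi_n(\eta\zeta)}^2-\abs{\xi_n(\zeta)}^2$ and applying Cauchy--Schwarz twice bounds the inner integral by $2\norm f_\infty D_n(\eta)^{1/2}$, where $D_n(\eta)=\int_{\cG^{s(\eta)}}\abs{\xi_n(\eta\zeta)-\xi_n(\zeta)}^2\rd\lambda^{s(\eta)}(\zeta)=2-2\,\mathrm{Re}\int_{\cG^{r(\eta)}}\overline{\xi_n(\gamma_1)}\xi_n(\eta^{-1}\gamma_1)\rd\lambda^{r(\eta)}(\gamma_1)$. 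By $(iv)$, $D_n\to0$ weak$^*$, hence $D_n^{1/2}\to0$ weak$^*$; since $\abs\psi\,(\abs F\circ r)\in L^1(\cG,\mu\circ\lambda)$ (its integral is $\le\norm F_1\sup_x\lambda^x(\abs\psi)$), we obtain $\abs{\scal{m_n(\psi*f)-\psi*m_n(f),F}}\le2\norm f_\infty\scal{D_n^{1/2},\abs\psi\,(\abs F\circ r)}\to0$, as required.

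Finally, $(i)\Ra(iii)$ and $(ii)\Ra(iii)$, which is the crux, and both proceed through the Namioka--Day convexity device. Let $\cD$ be the convex set of non-negative Borel $g$ with $\lambda(g)=1$. For a finite family $\phi_1,\dots,\phi_k$ of non-negative $L^1$ functions with $\sup_x\lambda^x(\phi_j)<\infty$ (a dense class), put $\Omega_j=\set{(\eta,\gamma_1):r(\eta)=r(\gamma_1)}$ with $\rd\nu_j=\phi_j(\eta)\rd\lambda^{r(\eta)}(\gamma_1)\rd(\mu\circ\lambda)(\eta)$, and let $\Xi_j:\cD\to L^1(\Omega_j,\nu_j)$, $\Xi_j(g)(\eta,\gamma_1)=g(\eta^{-1}\gamma_1)-g(\gamma_1)$; this is affine with $\norm{\Xi_j(g)}_{L^1(\nu_j)}=\scal{R_g,\phi_j}$. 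Since $\Xi(\cD)$ is convex, its weak and norm closures in $\bigoplus_jL^1(\Omega_j,\nu_j)$ coincide, so it suffices to show $0\in\overline{\Xi(\cD)}^{w}$ for every such family and then extract a sequence by a diagonal argument over a countable dense family of $\phi$'s. Were $0\notin\overline{\Xi(\cD)}^{w}$, Hahn--Banach would provide $(h_j)\in\bigoplus_jL^\infty(\Omega_j,\nu_j)$ and $\delta>0$ with $\sum_j\scal{h_j,\Xi_j(g)}\ge\delta$ for all $g\in\cD$; unwinding this --- by Fubini, and using the modular function of $(\cG,\lambda,\mu)$ on the term carrying $g(\eta^{-1}\gamma_1)$ --- rewrites the left-hand side as $\scal{\Psi,g}$ for a single $\Psi\in L^\infty(\cG)$ of ``coboundary type''. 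One then contradicts this: in case $(ii)$, the weak Day hypothesis (with a further convexity reduction to handle that $\Psi$ is an arbitrary, not rank-one, kernel) forces $\inf_{g\in\cD}\scal{\Psi,g}\le0$; in case $(i)$, one shows first --- via Hahn--Banach and the fact that a norm-one projection $m'$ onto $L^\infty(\cG^{(0)})$ is an $L^\infty(\cG^{(0)})$-bimodule map, hence local over $\cG^{(0)}$, so that $m'(h)(x)$ lies between the fibrewise essential infimum and supremum of $h$ on $\cG^x$ --- that $\set{m_g:g\in\cD}$ is point-weak$^*$ dense among all norm-one projections $L^\infty(\cG)\to L^\infty(\cG^{(0)})$; consequently the amenability projection $m$ is a point-weak$^*$ limit of maps $m_{g_\alpha}$, so $\scal{\Psi,g_\alpha}=\scal{m_{g_\alpha}(\Psi),\un}\to\scal{m(\Psi),\un}$, while the module property forces $m(\Psi)=0$ (the groupoid analogue of ``an invariant mean annihilates coboundaries''); either way $\scal{\Psi,g}\ge\delta$ for all $g\in\cD$ is impossible. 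The hard point is precisely this: the absolute value in the Reiter quantity rules out any direct affine argument, so one is pushed into the convex-set/Hahn--Banach machinery, and the transpositions $\eta\leftrightarrow\eta^{-1}$ dictated by quasi-invariance (the modular function) make both the identification of $\Psi$ and the verification $m(\Psi)=0$ genuinely delicate.
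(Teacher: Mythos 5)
The paper itself contains no proof of this statement; it is quoted from \cite[Proposition 3.2.14]{AD-R}, so your attempt can only be measured against the argument in that reference. Your treatment of the soft implications is correct and is essentially the standard one: the square-root passage between (iii) and (iv), the Fubini estimate $\norm{f*g_n-(\lambda(f)\circ r)g_n}_{1}\leq\scal{R_n,\abs{f}}$ giving (iii) $\Ra$ (ii) in norm, and the construction in (iv) $\Ra$ (i) of the mean as a point-weak* cluster point of $m_n(f)(x)=\int_{\cG^x}f\abs{\xi_n}^2\rd\lambda^x$, with the Cauchy--Schwarz bound against $D_n^{1/2}$ and the pre-adjoint of $\psi*(\cdot)$ built from the modular function, are all sound.

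The gap is in the closing implications (i) $\Ra$ (iii) and (ii) $\Ra$ (iii), at exactly the point you flag but do not resolve. After Hahn--Banach separation the obstruction is a general element $(h_j)$ of $\bigoplus_jL^\infty(\Omega_j,\nu_j)$, that is a genuinely two-variable kernel $H(\eta,\gamma_1)=\sum_jh_j(\eta,\gamma_1)\phi_j(\eta)$, whereas both the weak Day condition and the invariance $m(\psi*f)=\psi*m(f)$ only control the pairing of $g\mapsto g(\eta^{-1}\gamma_1)-g(\gamma_1)$ against \emph{rank-one} kernels $\psi(\eta)f(\gamma_1)$. Your ``further convexity reduction'' cannot be a routine approximation: the only bound on $T_H(g)$ that is uniform over $g\in\cD$ is of $L^\infty$-type in $H$, and finite sums of product functions are not norm-dense in $L^\infty$ of a fibred product, so one cannot pass from rank-one to general kernels this way; correspondingly, the asserted identity $m(\Psi)=0$ is an instance of the invariance of $m$ only when the kernel is rank-one, and for a general $H$ the function $\Psi$ is not a coboundary that $m$ is known to annihilate. (A secondary symptom of the same problem: the piece of $\Psi$ carrying $g(\eta^{-1}\gamma_1)$ unwinds, via the modular function $\Delta$, into an integral of $\phi_j\Delta$ over the fibres of the \emph{source} map, which need not be bounded under your hypothesis $\sup_x\lambda^x(\phi_j)<\infty$, so $\Psi$ need not even lie in $L^\infty(\cG)$.) The missing idea, which is the technical heart of the proof in \cite{AD-R}, is to upgrade $m$ to an $L^\infty(\cG)$-linear norm-one projection from $L^\infty(\cG*_r\cG)$ onto $L^\infty(\cG)$ obtained by applying $m$ fibrewise in the second variable (equivalently, to pass through Zimmer-type means on fibred products); it is this extended mean that kills the general kernels produced by the separation argument and lets the Namioka--Day device close.
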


\subsection{Amenability of locally compact groupoids} The (topological) amenability\footnote{From now on, amenability will implicitly mean topological amenability.} of a locally compact groupoid $\cG$ has been introduced by Renault in \cite{Ren_book}.  In \cite{AD-R} it is defined as follows.

\begin{defn} \cite[Definition 2.2.1]{AD-R} We say that a locally compact groupoid $\cG$ is {\em amenable} if there exists a net (or a sequence when $\cG$ is $\sigma$-compact) $(m_i)$, where $m_i =  (m_i^{x})_{x\in \cG^{(0)}}$ is a family of probability measures $m_i^{x}$ on $\cG^x$, continuous in the sense that $x\mapsto m_i^{x}(f)$ is continuous for every $f\in \cC_c(\cG)$, and such that $\lim_i\norm{\gamma m_i^{s(\gamma)} - m_i^{r(\gamma)}}_1 = 0$ uniformly on every compact subset of $\cG$.
\end{defn}

This notion has  many equivalent definitions:

\begin{thm}\label{thm:caractAmen}\cite[Proposition 2.2.13]{AD-R} Let $\cG$ be a $\sigma$-compact locally compact groupoid. The following conditions are equivalent:
\begin{itemize}
\item[(i)] $\cG$ is amenable;
\item[(ii)] $[$Reiter condition$]$ There exists a sequence $(g_n)$ in $\cC_c(\cG)^+$ such that $\lim_n\lambda(g_n) = 1$ uniformly on every compact subset of $\cG^{(0)}$ and $\lim_n \int\abs{g_n(\gamma^{-1}\gamma_1) - g_n(\gamma_1)}\rd \lambda^{r(\gamma)}(\gamma_1) = 0$ uniformly on every compact subset of $\cG$;
\item[(iii)] There exists a sequence $(h_n)$ of continuous positive definite functions with compact support on $\cG$ whose restrictions to $\cG^{(0)}$ are bounded by $1$ and such that $\lim_n h_n = 1$ uniformly on every compact subset  of $\cG$;
\item[(iv)] $[$Godement condition$]$ There exists a sequence $(\xi_n)$  in $\cC_c(\cG)$ such that $\lambda(\abs{\xi_n}^2) \leq 1$ for all $n$ and $\lim_n \int\overline{\xi_n(\gamma_1)}\xi_n(\gamma^{-1}\gamma_1) \rd\lambda^{r(\gamma)}(\gamma_1) = 1$ uniformly on every compact subset of $\cG$.
 \end{itemize}
 \end{thm}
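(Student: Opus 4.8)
The plan is to prove the equivalence of the four conditions in Theorem~\ref{thm:caractAmen} by establishing a cycle of implications, following the strategy that worked in the measured setting (Theorem~\ref{thm:caractAmenMeas}) but being careful to keep everything \emph{continuous} and to replace weak/weak$^*$ convergence by uniform convergence on compacta. First I would dispose of the easy half of the Reiter--Godement interplay: given a Reiter sequence $(g_n)$ as in (ii), set $\xi_n = \sqrt{g_n}$; then $\lambda(|\xi_n|^2) = \lambda(g_n) \to 1$ uniformly on compacta (so after a harmless normalization we may assume $\le 1$), and the Godement-type integral in (iv) is controlled by the Reiter integral in (ii) via the elementary inequality $|\sqrt{a}-\sqrt{b}|^2 \le |a-b|$ together with Cauchy--Schwarz, giving (ii) $\Ra$ (iv). Conversely, starting from a Godement sequence $(\xi_n)$ as in (iv), the functions $h_n(\gamma) = \int \overline{\xi_n(\gamma_1)}\,\xi_n(\gamma\gamma_1)\,\rd\lambda^{r(\gamma)}(\gamma_1)$ (a convolution square $\xi_n^* * \xi_n$ up to the usual inversion convention) are continuous, compactly supported, positive definite on $\cG$, with $h_n|_{\cG^{(0)}} = \lambda(|\xi_n|^2) \le 1$, and the hypothesis says exactly $h_n \to 1$ uniformly on compacta; this gives (iv) $\Ra$ (iii), and one checks positive-definiteness from the GNS-type computation $\sum_{i,j}\bar c_i c_j h_n(\gamma_i^{-1}\gamma_j) = \|\sum_i c_i \, L_{\gamma_i}\xi_n\|^2 \ge 0$ fibrewise.

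Next I would run (iii) $\Ra$ (i). Given a positive definite $h_n \in \cC_c(\cG)$ with $h_n|_{\cG^{(0)}} \le 1$ and $h_n \to 1$ uniformly on compacta, the standard groupoid analogue of the GNS construction produces, for each $n$, a continuous field of Hilbert spaces and a continuous section $\eta_n$ with $\scal{L_\gamma \eta_n^{s(\gamma)}, \eta_n^{r(\gamma)}} = h_n(\gamma)$; the near-invariance of $h_n$ forces $\|\gamma \eta_n^{s(\gamma)} - \eta_n^{r(\gamma)}\| \to 0$ uniformly on compacta, and then $m_n^x = |\eta_n^x|^2 \lambda^x / \|\eta_n^x\|^2$ (or more directly one works with $\xi_n$ recovered from $h_n$) gives the required net of probability measures after the usual diagonal/normalization argument to handle the fact that $\lambda(|\eta_n|^2)$ is only \emph{close} to $1$ rather than equal to it. Finally, (i) $\Ra$ (ii) is the reverse direction: from an approximate-invariant continuous system of probability measures $(m_i^x)$ one writes $m_i^x = g_i \,\lambda^x$ where the Radon--Nikodym density need not be continuous or compactly supported, so here one must \emph{smooth}: approximate $g_i$ in $L^1$ of the fibres by elements of $\cC_c(\cG)^+$, using a partition of unity on $\cG$ subordinate to the (open, by the remark after the definition of a continuous Haar system) range map and the $\sigma$-compactness hypothesis to keep the construction continuous and the supports compact, while preserving $\lambda(g_n) \to 1$ and the Reiter estimate uniformly on compacta.

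The main obstacle I expect is precisely this last smoothing/continuity bookkeeping in (i) $\Ra$ (ii) and, symmetrically, the passage in (iii) $\Ra$ (i) and (iv) $\Ra$ (iii): in the measured case one is allowed merely Borel functions and weak$^*$ convergence, so convex combinations and a Hahn--Banach / Day-type argument suffice, whereas in the topological case one must produce genuine elements of $\cC_c(\cG)$ and upgrade to uniform convergence on compact sets. This requires (a) that $r$ be open, so that $\cC_c(\cG)$-functions integrate to $\cC_c(\cG^{(0)})$-functions and one can build continuous sections of the relevant fields, and (b) a careful use of $\sigma$-compactness to replace nets by sequences and to run a diagonal argument exhausting $\cG$ by compacta. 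I would also need the fibrewise GNS construction for positive definite functions on groupoids to be available in continuous form; this is standard (it is implicit in \cite{AD-R}) but its verification is the technical heart. Everything else—the inequalities relating the Day, Reiter and Godement integrals—is a routine Cauchy--Schwarz and change-of-variables computation that I would not spell out in detail, exactly as in the proof of Theorem~\ref{thm:caractAmenMeas}.
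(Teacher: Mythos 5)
The paper itself gives no proof of this theorem: it is quoted verbatim from \cite[Proposition 2.2.13]{AD-R}, so there is no in-text argument to compare against. Your cycle (i)$\Leftrightarrow$(ii)$\Rightarrow$(iv)$\Rightarrow$(iii)$\Rightarrow$(i) is the standard one and matches the strategy of the cited reference. The easy legs are fine as you describe them: (ii)$\Rightarrow$(iv) via $\xi_n=\sqrt{g_n}$, $|\sqrt a-\sqrt b|^2\le|a-b|$ and Cauchy--Schwarz, and (iv)$\Rightarrow$(iii) via the convolution square, whose positive-definiteness follows from the substitution $\gamma_1=\gamma_i^{-1}\eta$ and left invariance of $\lambda$ --- modulo the inversion slip in your formula for $h_n$ (with $\gamma_1\in\cG^{r(\gamma)}$ the product $\gamma\gamma_1$ is not defined; it must be $\gamma^{-1}\gamma_1$), and the observation that the normalization forcing $\lambda(|\xi_n|^2)\le 1$ has to be done fibrewise, multiplying $\xi_n$ by $\bigl(\max(1,\lambda(g_n))\bigr)^{-1/2}\circ r$ rather than by a global constant.

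The two legs you flag as the technical heart are, however, more than bookkeeping, and the concrete formulas you propose for them do not work as written. In (i)$\Rightarrow$(ii) you write $m_i^x=g_i\,\lambda^x$ and plan to approximate the density $g_i$ in $L^1$ of the fibres; but the definition of amenability only supplies probability measures $m_i^x$ on $\cG^x$, weak*-continuous in $x$, with no absolute continuity with respect to $\lambda^x$ (they could be point masses), so there is no density to approximate. The standard repair is first to convolve, replacing $m_i$ by $\gamma\mapsto\int f(\eta^{-1}\gamma)\,\rd m_i^{r(\gamma)}(\eta)$ for a suitable $f\in\cC_c(\cG)^+$, which produces continuous $\lambda$-densities while preserving approximate invariance, and only then to truncate using $\sigma$-compactness. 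In (iii)$\Rightarrow$(i) the formula $m_n^x=|\eta_n^x|^2\lambda^x/\norm{\eta_n^x}^2$ does not typecheck: $\eta_n^x$ is a vector in the abstract fibre $H_x$ of the GNS field attached to $h_n$, not a function on $\cG^x$, so $|\eta_n^x|^2$ is not a density against $\lambda^x$. One must either realize that field concretely inside the field $\bigl(L^2(\cG^x,\lambda^x)\bigr)_x$ --- possible because $h_n$ has compact support, but that is precisely the nontrivial step --- or, as your parenthetical hints, extract an honest $\xi_n\in\cC_c(\cG)$ with $h_n$ close to a convolution square and close the cycle through (iv). Either way this step needs an actual construction; the identity $\norm{L_\gamma\eta_n^{s(\gamma)}-\eta_n^{r(\gamma)}}^2=h_n(s(\gamma))+h_n(r(\gamma))-2\,\mathrm{Re}\,h_n(\gamma)$ alone does not produce the required probability measures on the fibres $\cG^x$.
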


 Recall that a function $h$ on $\cG$ is {\em positive definite} or of {\em positive type}  if for every $x\in \cG^{(0)}$, $n\in \N$ and $\gamma_1,\cdots, \gamma_n \in \cG^x$, the $n \times n$ matrix $[h(\gamma_i^{-1}\gamma_j)]$ is non-negative. For instance, given $\xi$  on $\cG$ such that $\lambda(\abs{\xi}^2) $ is bounded on $\cG^{(0)}$, the function $\gamma \mapsto \int\overline{\xi(\gamma_1)}\xi(\gamma^{-1}\gamma_1) \rd\lambda^{r(\gamma)}(\gamma_1)$ is positive definite.
 
 \begin{rems}\label{rem:Godement} (a)  In \cite{AD-R} it is assumed that $\cG$ is second countable but the proof of the above theorem holds as well when $\cG$ is $\sigma$-compact. This observation will be useful later when working with the groupoid $\beta_r\cG \rtimes \cG$.
 
(b) In the above characterizations, the boundedness conditions for the sequences $(h_n)$ and $(\xi_n)$ are not necessary (see \cite[Propositions  2.2.13]{AD-R}).
   \end{rems}

 \begin{defn}\cite[Chap. II, Definition 3.6]{Ren_book},\cite[Definition 3.3.1]{AD-R} One says that  a  second countable locally compact groupoid with Haar system $(\cG,\lambda)$  is {\em measurewise amenable} if for every quasi-invariant measure $\mu$ on $\cG^{(0)}$ the measured groupoid $(\cG,\lambda,\mu)$ is amenable.
 \end{defn}
  
    Topological amenability is closely related to measurewise amenability.  It is not hard to see for instance that the Reiter condition of Theorem \ref{thm:caractAmen} implies the weak Reiter condition of Theorem \ref{thm:caractAmenMeas} for every quasi-invariant measure $\mu$. Therefore topological amenability implies measurewise amenability. It is a long-standing open question whether the converse is true. This has been proved for \'etale groupoids \cite[Corollary 3.3.8]{AD-R} and recently for locally compact second countable semidirect product groupoids \cite[Corollary 3.29]{BEW20}.

   \begin{rem} Let us consider the case of a  locally compact  semidirect product groupoid\footnote{For these groupoids the $\sigma$-compactness assumption is not needed \cite[Proposition 2.5]{AD02}.} $\cG= X\rtimes G$. Then, topological amenability is for instance spelled out as the existence of a net $(m_i)$ of weak*-continuous maps $m_i: x\mapsto m_i^{x}$ from $X$ into the space of probability measures on $G$, such that $\lim_i \norm{gm_i^{x} - m_i^{gx}}_1 = 0$ uniformly on every compact subset of $X\times G$. In this case we also say that {\em the $G$-action on $X$ is amenable}.
   
We set $A = \cC_0(X)$ and for every $f\in \cC_c(X\times G)$ we set $\tilde f(s)(x) = f(x,s)$. Then $\tilde f$ is in the space $\cC_c(G,A)$ of continuous functions  with compact support from $G$ into $A$. It is also an element of the Hilbert $A$-module $L^2(G,A)$ given as the completion of $\cC_c(G,A)$ with respect to the $A$-valued inner product $\scal{\xi,\eta} = \int_G \xi(s)^*\eta(s) \rd\lambda(s)$. Finally, for $\xi\in \cC_c(G,A)$, we set $(\tilde\alpha_t\xi)(s)(x) = \xi(t^{-1}s)(t^{-1}x)$ and we denote by the same symbol the continuous extension of $\tilde\alpha_t$ to $L^2(G,A)$. If $h_i(\gamma)=  \int \overline{\xi_i(\gamma_1)}\xi_i(\gamma^{-1}\gamma_1) \rd\lambda^{r(\gamma)}(\gamma_1)$ with $\xi_i\in \cC_c(X\times G)$,  we have
  $$\tilde h_i(t)(x) = \int_G \overline{\xi_i(x,s)} \xi_i(t^{-1}x,t^{-1}s)\rd\lambda(s) = \scal{\tilde\xi_i,\tilde\alpha_t(\tilde\xi_i)}(x).$$
 It follows that the Godement condition characterizing the amenability of $X\rtimes G$ may be interpreted as the existence of a bounded net $(\eta_i)$ in $L^2(G,A)$ such that $\scal{\eta_i, \tilde\alpha_t(\eta_i)} \to 1$ uniformly on compact subsets of $G$ in the strict topology of $A$.
 
 The first tentative to define  an amenable action of a group on a non-commutative $C^*$-algebra $A$ was presented in \cite{AD87}. The solution was not  satisfactory since it was limited to discrete groups and involved the bidual of $A$.  Since the end of the 2010s, a new interest in the subject has led to major advances \cite{BC, BEW, BEW20, OS20} and resulted in very nice equivalent definitions of amenability. One of the definitions is the following extension of the commutative case described above, called the approximation property (AP), first introduced in \cite{Exe, ENg} in the setting of Fell bundles over locally compact groups. An action $\alpha: G\actson A$ of a locally compact group on a $C^*$-algebra $A$ has the {\em approximation property} (AP) if there exists a bounded net (or sequence in separable cases) $(\eta_i)$ in $\cC_c(G,A) \subset L^2(G,A)$  such that $\scal{\eta_i, a \tilde\alpha_t(\eta_i)} \to a$ in norm, uniformly on compact subsets of $G$, for every $a\in A$. Here one sets again $\tilde\alpha_t(\eta)(s) = \alpha_t(\eta(t^{-1}s))$ for $\eta\in \cC_c(G,A)$, $s,t\in G$. For interesting properties of amenable actions of locally compact groups on $C^*$-algebras we refer to \cite{BEW20, OS20}.
\end{rem}

 \section{Amenable at infinity groupoids}
 As already said, the property for a locally compact group $G$ to be KW-exact is equivalent to the existence of an amenable $G$-action on a compact space. In order to try to extend this fact to a locally compact groupoid $\cG$ we need some preparation.
 
 \subsection{First definitions} Let $X$ be a locally compact space. A {\it fibre space} over $X$ is a pair $(Y,p)$ where $Y$ is a locally compact space  and $p$ is  a continuous surjective map from  $Y$ on $X$. For $x\in X$ we denote by $Y^x$ the {\it fibre} $p^{-1}(x)$.  We say that $(Y,p)$ is {\em fibrewise compact} if the map $p$ is proper in the sense that $p^{-1}(K)$ is compact for every subset $K$ of $X$. Note that this property is stronger than requiring each fibre to be compact.

Let $ (Y_i,p_i)$, $i=1,2$, be two fibre spaces over $X$. We denote by $Y_{1}   
 \,_{p_1}\!\!*_{p_2} Y_2$ (or $Y_1*Y_2$ when there is no ambiguity) the {\it fibred product}\index{fibred product} 
 $\set{(y_1,y_2)\in Y_1\times Y_2: p_1(y_1) = p_2(y_2)}$
  equipped  with the topology induced by the product topology.  We say that a continuous map $\varphi: Y_1\to Y_2$  is a {\it morphism of fibre spaces} if $p_2\circ \varphi = p_1$. 
  
 \begin{defn}\label{def:Gspace} Let $\cG$ be a locally compact groupoid. A {\it left} $\cG$-{\it space} \index{$\cG$-space} is a fibre space $(Y,p)$ over $X = \cG^{(0)}$, equipped with a continuous map $(\gamma, y) \mapsto \gamma y$ from $\cG\,_s\!*_pY$ into $Y$, satisfying the following conditions:
 \begin{itemize}
 \item $p(\gamma y) = r(\gamma)$ for $(\gamma, y) \in \cG\,_s\!*_pY$, and $p(y)y = y$ for $y\in Y$;
 \item if $(\gamma_1, y) \in \cG\,_s\!*_pY$ and $(\gamma_2,\gamma_1)\in \cG^{(2)}$, then $(\gamma_2\gamma_1)y = \gamma_2(\gamma_1 y)$.
 \end{itemize}
 \end{defn}

Given such a $\cG$-space $(Y,p)$, we associate a groupoid $Y\rtimes \cG$, called the semidirect product groupoid of $Y$ by $\cG$. It is defined as in the case of group actions except that as a topological space it is the fibred product $Y\!_p\!*_r \cG$ over $X = \cG^{(0)}$. Although $p$ is not assumed to be an open map, the range map $(y,\gamma) \mapsto y$ from  $Y\rtimes \cG$ onto $Y$ is open since the range map $r:\gamma \mapsto r(\gamma)$ is open. Moreover, if $\cG$ has a Haar system $(\lambda^x)_{x\in X}$, then  $Y\rtimes \cG$ has the canonical Haar system $y\mapsto \delta_y\times \lambda^{p(y)}$ (identified with $\lambda^{p(y)}$ on $\cG^{(p(y)}$) (see \cite[Proposition 1.4]{AD16}). Note that $Y\rtimes \cG$ is an \'etale groupoid when $\cG$ is \'etale.  

We say that the  {\em $\cG$-space $(Y,p)$ is amenable} if the semidirect product groupoid $Y\rtimes \cG$ is amenable. Note that if $\cG$ is an amenable groupoid, every $\cG$-space is amenable \cite[Corollary 2.2.10]{AD-R}.

There is a subtlety about the definition of amenability at infinity which leads us to introduce two notions. We do not know whether they are equivalent in general.

\begin{defn}\label{def:ameninf} Let $\cG$ be a locally compact groupoid and let $X= \cG^{(0)}$. We say that 
\begin{itemize}
\item[(i)] $\cG$ is {\em strongly amenable at infinity}  if there exists an amenable fibrewise compact $\cG$-space $(Y,p)$ with a continuous section $\sigma : X\to Y$ of $p$;
\item[(ii)]  $\cG$ is {\em amenable at infinity}  if there exists an amenable fibrewise compact $\cG$-space;
\end{itemize}
\end{defn}

\begin{exs}\label{rem:invGE} (a) Every locally compact amenable groupoid $\cG$ is strongly amenable at infinity since the left action of $\cG$ on its unit space is amenable.

(b)  It is easily seen that  the semidirect product groupoid $\cG = X\rtimes G$ relative to an action of  a KW-exact (hence amenable  at infinity) locally compact group $G$  on a locally compact space $X$ is  strongly amenable at infinity \cite[Proposition 4.3]{AD16}. This is also true for partial actions\footnote{For the definition see \cite[\S I.2, \S I.5]{Exel_Book}.} of exact discrete groups \cite[Proposition 4.23]{AD16}.
\end{exs}

It is useful to have a criterion  of amenablity at infinity which does not involve $Y$ but only $\cG$. Before proceeding further we need to introduce some notation and definitions. We set $\cG*_r \cG = \set{(\gamma,\gamma_1) \in \cG\times \cG : r(\gamma) = r(\gamma_1)}$. A subset of $\cG*_r \cG$ will be called a {\it tube} if its image by the map
$(\gamma,\gamma_1) \mapsto \gamma^{-1}\gamma_1$ is relatively compact in ${\mathcal G}$. We denote by
$\cC_t(\cG*_r\cG)$ the space of continuous bounded functions on $\cG*_r\cG$ with support in a tube.

We say that a function $k: \cG *_r \cG \to \C$ is a {\it positive definite kernel} if for every $x\in X$, $n \in \N$ and $\gamma_1,\dots,\gamma_n \in \cG^x$, the matrix $[k(\gamma_i,\gamma_j)]$ is non-negative, that is
$$\sum_{i,j=1}^n \overline{\alpha_i}\alpha_j k(\gamma_i,\gamma_j) \geq 0$$
for $\alpha_1,\dots,\alpha_n \in \C$.

In the case of groups (for which amenability at infinity coincides with strong amenability at infinity) let us recall the following result:

\begin{thm}\label{caractinfgroup} A (second countable) locally compact group $G$ is amenable at infinity if and only if there exists a net $(k_i)$ of continuous positive definite kernels $k_i: G\times G \to \C$ with support in tubes such that  $\lim_i k_i = 1$ uniformly  on tubes. 
\end{thm}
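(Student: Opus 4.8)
The plan is to pass, in both directions, through the semidirect product groupoid $\cG=Y\rtimes G$ attached to a compact $G$-space $Y$, and to use the characterizations of amenability in Theorem~\ref{thm:caractAmen}. Two preliminary observations organize everything. First, for a group the unit space $\cG^{(0)}$ is a single point, so a fibre space over it is just a space, ``fibrewise compact'' means compact, a continuous section exists as soon as the space is nonempty, and hence amenability at infinity and strong amenability at infinity both amount to: \emph{there is a compact space $Y$ carrying a continuous $G$-action with $Y\rtimes G$ amenable}. Second, if $Y$ is compact then $Y\rtimes G=Y\times G$ is $\sigma$-compact (as $G$ is second countable), so Theorem~\ref{thm:caractAmen} together with Remark~\ref{rem:Godement} applies to it; I shall freely use the equivalence of amenability with the existence of a net (a sequence will do, by $\sigma$-compactness) of compactly supported continuous positive type functions on $Y\rtimes G$ tending to $1$ uniformly on compacta, \emph{without} the normalization on $\cG^{(0)}$. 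The translation between such functions on $Y\rtimes G$ and positive definite kernels on $G\times G$ is effected by a base point: writing an element of $Y\rtimes G$ over $y$ as $(y,g)$, one has $(y,g_j)^{-1}(y,g_l)=(g_j^{-1}y,\,g_j^{-1}g_l)$.

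\emph{The easy direction: amenability at infinity implies the kernel condition.} Given an amenable compact $G$-space $Y$ and a base point $y_0\in Y$, I would pick, by Theorem~\ref{thm:caractAmen}(iii), a sequence $(h_n)$ of continuous positive type functions with compact support on $Y\rtimes G$ with $h_n\to1$ uniformly on compact subsets of $Y\rtimes G$, and set
\[
k_n(g,h):=h_n\bigl(g^{-1}y_0,\,g^{-1}h\bigr),\qquad g,h\in G .
\]
By the displayed product formula, $[k_n(g_j,g_l)]_{j,l}=[h_n(g_j^{-1}y_0,\,g_j^{-1}g_l)]_{j,l}\ge 0$, so each $k_n$ is a continuous (and bounded) positive definite kernel on $G\times G$. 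If $k_n(g,h)\neq0$ then $(g^{-1}y_0,g^{-1}h)\in\supp(h_n)$, whose image under the second-coordinate projection is a compact $K_n\subseteq G$; thus $g^{-1}h\in K_n$, and $k_n$ has support in a tube. Finally, if $T$ is a tube and $\overline{K}$ denotes the (compact) closure of $\set{g^{-1}h:(g,h)\in T}$, then for $(g,h)\in T$ the point $(g^{-1}y_0,g^{-1}h)$ lies in the compact set $Y\times\overline{K}\subseteq Y\rtimes G$; hence $\sup_{(g,h)\in T}\abs{k_n(g,h)-1}\le\sup_{Y\times\overline{K}}\abs{h_n-1}\to0$. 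So $(k_n)$ is as required.

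\emph{The substantial direction: the kernel condition implies amenability at infinity.} Here the plan is to manufacture, out of the net $(k_i)$, a $G$-equivariant compactification $Y$ of $G$ together with a net of compactly supported continuous positive type functions on $Y\rtimes G$ converging to $1$ uniformly on compacta, and then invoke Theorem~\ref{thm:caractAmen}. The steps: (a) \emph{Regularize.} Replace $k_i$ by $k_i^{\varphi}(g,h):=\int_G\varphi(a)\,k_i(ga,ha)\,\rd\lambda(a)$ with $\varphi\in\cC_c(G)^+$ and $\int\varphi\,\rd\lambda=1$; since for each $a$ the points $g_ja$ lie in one fibre, positive definiteness is preserved, and support in a tube and convergence to $1$ uniformly on tubes persist. (Taking $h=g$ and the diagonal tube shows $k_i(g,g)\to1$ uniformly, so after discarding small indices each $k_i$, hence each $k_i^{\varphi}$, may be assumed bounded.) (b) \emph{Build $Y$.} Let $B$ be a $G$-invariant, unital, commutative $C^*$-subalgebra of the bounded uniformly continuous functions on $G$ containing suitable ``matrix coefficient'' functions associated with the $k_i^{\varphi}$ --- e.g.\ $v\mapsto k_i^{\varphi}(v^{-1},v^{-1}g)$ and their translates; such a $B$ carries a point-norm continuous $G$-action, so $Y:=\widehat{B}$ is a compact $G$-space in which $G$ has dense image. (c) \emph{Assemble the positive type functions.} The identity
\[
h_i^{\varphi}\bigl(g_j^{-1}v,\ g_j^{-1}g_l\bigr)=k_i^{\varphi}\bigl(v^{-1}g_j,\ v^{-1}g_l\bigr),\qquad h_i^{\varphi}(v,g):=k_i^{\varphi}(v^{-1},v^{-1}g),
\]
shows that $h_i^{\varphi}$ is positive definite along the dense orbit $G\subseteq Y$; by construction it extends, for each fixed $g$, to an element of $C(Y)$, so it defines $\tilde h_i^{\varphi}\in\cC(Y\rtimes G)$, still positive definite by density, with support compact in the $G$-variable (because $h_i^{\varphi}(v,g)\neq0$ confines $g$ to a fixed compact set, a tube-bound of $k_i$ conjugated by $\supp(\varphi)$); thus $\tilde h_i^{\varphi}\in\cC_c(Y\rtimes G)$. (d) \emph{Convergence.} Any compact subset of $Y\rtimes G=Y\times G$ is contained in some $Y\times C$ with $C\subseteq G$ compact, and the pairs $(v^{-1},v^{-1}g)$ with $g\in C$ lie in the fixed tube $\set{(x,y):x^{-1}y\in C}$; hence $\sup_{Y\times C}\abs{\tilde h_i^{\varphi}-1}=\sup_{v\in G,\,g\in C}\abs{k_i^{\varphi}(v^{-1},v^{-1}g)-1}\to0$. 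By Theorem~\ref{thm:caractAmen} (equivalence of (i) and (iii), valid for $\sigma$-compact groupoids and without the normalization, Remark~\ref{rem:Godement}), $Y\rtimes G$ is amenable; since $Y$ is compact and any of its points serves as a section, $G$ is strongly, hence ordinarily, amenable at infinity.

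\emph{Where the difficulty lies.} The delicate point is step (b)--(c) of the converse: one must arrange that the matrix-coefficient functions $v\mapsto k_i^{\varphi}(v^{-1},v^{-1}g)$ are genuinely \emph{uniformly} continuous on $G$ --- on the side matching the semidirect product, so that the translation action used to form $Y\rtimes G$ is point-norm continuous --- and jointly continuous on $Y\times G$. The obstruction is a one-sidedness mismatch: the regularizations preserving positive definiteness insert the convolution variable on one prescribed side, whereas the uniform continuity compatible with $Y\rtimes G$ sits on the other side, and reconciling them in the non-discrete case requires extra care (with the modular function of $G$, a preliminary reduction, or a two-sided averaging). By contrast, when $G$ is discrete every bounded function on $G$ is continuous, no regularization is needed, $Y$ can simply be taken to be $\beta G$, and the construction collapses to the classical property-A picture. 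The remaining verifications --- that ``uniform on tubes'' survives regularization and becomes ``uniform on compacta of $Y\rtimes G$'', and that positive-definiteness passes from the dense orbit to all of $Y$ --- are routine and are indicated above.
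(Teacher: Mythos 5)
The paper does not prove Theorem \ref{caractinfgroup} itself; it delegates the discrete case to \cite{Oza} and the general locally compact case to \cite[Theorem 2.3, Corollary 2.9]{DL}, noting only that the key ingredient is a universal compact $G$-space (a variant of $\beta G$). Your strategy is the same as the one behind those references. Your ``easy'' direction is complete and correct: transporting a compactly supported positive type function $h_n$ on $Y\rtimes G$ to the kernel $k_n(g,h)=h_n(g^{-1}y_0,g^{-1}h)$ is exactly the right move, and your verifications of positive definiteness, tube support and uniform convergence on tubes are sound.

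The converse, however, has a genuine gap, and it is precisely the one you flag yourself in your closing paragraph. For non-discrete $G$ the whole content of this direction is to show that the functions $v\mapsto k_i(v^{-1},v^{-1}g)$ (or suitable modifications) generate a unital commutative $G$-invariant $C^*$-algebra of functions that are uniformly continuous \emph{on the side compatible with the translation action}, so that the spectrum $Y$ is a genuine compact $G$-space and the $h_i$ extend to jointly continuous positive type functions on $Y\rtimes G$. Your right-convolution regularization $k_i^{\varphi}(g,h)=\int\varphi(a)\,k_i(ga,ha)\,\rd\lambda(a)$ does preserve positive definiteness, tube support and uniform convergence on tubes, but it does not deliver the required uniform continuity: writing out $k_i^{\varphi}(v^{-1}s^{-1},v^{-1}s^{-1}g)$ and substituting $a=sb$ turns it into an average of $k_i\bigl(v^{-1}b,\,v^{-1}(s^{-1}gs)b\bigr)$ against a translate of $\varphi$, so comparing it with $k_i^{\varphi}(v^{-1},v^{-1}g)$ requires equicontinuity of $k_i$ in its second argument along the conjugates $s^{-1}gs$, which is not available from the hypotheses. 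This is not a routine verification to be ``indicated'': it is exactly the point where \cite[Proposition 3.5]{AD02} had to impose extra regularity on the kernels and where \cite{DL} had to work to remove that hypothesis. As written, your hard direction is an outline of the correct strategy with its central technical step replaced by the acknowledgement that it ``requires extra care''; to become a proof it needs either the Deprez--Li argument or an explicit smoothing procedure shown to produce kernels with the correct one-sided uniform continuity while keeping the tube estimates.
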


When $G$ is any discrete group  this is proved in \cite{Oza} and when $G$ is a locally compact second countable group this is proved in \cite[Theorem 2.3, Corollary 2.9]{DL} which improves \cite[Proposition 3.5]{AD02}. One important ingredient in the proof of the above theorem is the use of a universal compact $G$-space, namely the Stone-\v Cech compactification $\beta G$ of $G$ if $G$ is discrete and an appropriate variant of it in general.

\subsection{Fibrewise compactifications of $\cG$-spaces} In order to extend Theorem \ref{caractinfgroup}  to the case of groupoids we first need some informations about fibrewise compactifications of fibre spaces.

\begin{defn}\label{def:fibComp}
 A {\it fibrewise compactification} of a fibre  space $(Y,p)$ over a locally compact space $X$ is a triple $(Z, \varphi, q)$
where $Z$ is a locally compact space, $q: Z \rightarrow X$ is a continuous
{\it proper} map and $\varphi : Y \rightarrow Z$ is a homeomorphism onto an open
dense subset of $Z$ such that $p = q \circ \varphi$. 
\end{defn}

 We denote by $\cC_0(Y,p)$  the $C^*$-algebra of continuous bounded functions $g$ on $Y$ such that for every $\varepsilon >0$ there exists a compact subset $K$ of $X$ satisfying $\abs{g(y)} \leq \varepsilon$ if $y \notin p^{-1}(K)$.   We denote by $\beta_pY$ the Gelfand spectrum of $\cC_0(Y,p)$. The inclusion  $f\mapsto f\circ p$  from $\cC_0(X)$ into $\cC_0(Y,p)$ defines a surjection $p_\beta$ from $\beta_pY$ onto $X$. It is easily checked that $(\beta_pY,p_\beta)$ is fibrewise compact. We call it the  
{\em Stone-\v Cech fibrewise compactification of} $(Y,p)$. Note that when $X$ is compact, then $\cC_0(Y,p)$ is the $C^*$-algebra of continuous bounded functions on $Y$ and $\beta_pY$ is the usual Stone-\v Cech  compactification $\beta Y$ of $Y$. 

We observe that even if $p: Y\to X$ is open, its extension $p_\beta : \beta_pY\to X$ is not always open. Consider for instance $Y =( [0,1]\times \set{0}) \sqcup (]1/2, 1]\times \set{1}) \subset \R^2$ and let $p$ be the first projection on $X= [0,1]$. Then $\beta_p Y = \beta Y =( [0,1]\times \set{0}) \sqcup (\beta]1/2, 1]\times \set{1}$). The fibres of $\beta_p Y$ are the same as those of $Y$ except $(\beta_p Y)^{1/2} = (\set{1/2}\times \set{0})\sqcup \big((\beta]1/2,1]\setminus ]1/2,1])\times\set{1}\big)$.  Then $\beta_pY \setminus ([0,1]\times \set{0})$ is open and its image by $p_\beta$ is $[1/2,1]$.

The next proposition shows that $(\beta_p Y, p_\beta)$ is the solution of a universal problem.

\begin{prop}\label{prop:universal}\cite[Proposition A.4]{AD16} Let $(Y,p)$ and $(Y_1,p_1)$ be two fibre spaces over $X$, where $(Y_1,p_1)$ is fibrewise compact. Let $\varphi_1: (Y,p) \to (Y_1,p_1)$ be a morphism. There exists 
a unique continuous map  $\Phi_1 : \beta_p Y \to Y_1$ which extends $\varphi_1$. Moreover, $\Phi_1$ is proper and is a morphism of fibre spaces, that is,  $p_\beta = p_1\circ \Phi_1$.
\end{prop}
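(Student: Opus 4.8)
The statement is a universal property for the Stone-\v Cech fibrewise compactification, so the natural approach is the classical Gelfand-duality argument, adapted to the fibred setting. Recall that $\beta_p Y$ is the Gelfand spectrum of the commutative $C^*$-algebra $\cC_0(Y,p)$, that $Y$ sits inside $\beta_p Y$ as an open dense subset via the evaluation embedding, and that $p_\beta$ is dual to the inclusion $\cC_0(X) \hookrightarrow \cC_0(Y,p)$, $f \mapsto f\circ p$. The first thing I would do is observe that $\varphi_1 : Y \to Y_1$ induces, by pullback, a $*$-homomorphism $\varphi_1^* : \cC_b(Y_1) \to \cC_b(Y)$, $g \mapsto g\circ\varphi_1$, and I would check that $\varphi_1^*$ carries $\cC_0(Y_1,p_1)$ into $\cC_0(Y,p)$: this is exactly where fibrewise compactness of $(Y_1,p_1)$ enters. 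Indeed, if $g \in \cC_0(Y_1,p_1)$ and $\varepsilon>0$, there is a compact $K\subset X$ with $\abs{g}\le\varepsilon$ off $p_1^{-1}(K)$; since $p_1\circ\varphi_1 = p$, we get $\abs{g\circ\varphi_1}\le\varepsilon$ off $p^{-1}(K)$, so $g\circ\varphi_1 \in \cC_0(Y,p)$. Moreover $\varphi_1^*$ is unital-compatible with the two copies of $\cC_0(X)$ in the obvious sense: $(f\circ p_1)\circ\varphi_1 = f\circ p$.

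The second step is to dualize. A $*$-homomorphism $\cC_0(Y_1,p_1) \to \cC_0(Y,p)$ that is nondegenerate (which holds here because $\varphi_1^*$ is the restriction of the pullback on bounded functions and the image contains an approximate unit coming from bump functions on $X$ pulled back) yields a continuous map $\Phi_1 : \beta_p Y \to \beta_{p_1} Y_1$ on spectra; but since $(Y_1,p_1)$ is already fibrewise compact, its Stone-\v Cech fibrewise compactification is itself, i.e. $\beta_{p_1}Y_1 = Y_1$ (this is the remark in the excerpt that when the base is compact the construction gives the ordinary $\beta Y$, applied fibrewise — or simply the observation that $\cC_0(Y_1,p_1)$ already has spectrum $Y_1$ when $p_1$ is proper). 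So $\Phi_1 : \beta_p Y \to Y_1$. Unwinding the Gelfand correspondence on the dense subset $Y\subset\beta_p Y$ shows $\Phi_1$ restricts to $\varphi_1$ on $Y$: a point $y\in Y$ corresponds to evaluation $\mathrm{ev}_y$, and $\Phi_1(\mathrm{ev}_y)$ is the character $g\mapsto \mathrm{ev}_y(g\circ\varphi_1) = g(\varphi_1(y))$, i.e. $\mathrm{ev}_{\varphi_1(y)}$. Uniqueness of $\Phi_1$ is then immediate from density of $Y$ in $\beta_p Y$ together with Hausdorffness of $Y_1$.

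The third step is to verify the two additional assertions. That $p_\beta = p_1\circ\Phi_1$: both sides are continuous maps $\beta_p Y\to X$ agreeing on the dense set $Y$ (there both equal $p$), hence equal; alternatively, dually, the composite $\cC_0(X)\xrightarrow{f\mapsto f\circ p_1}\cC_0(Y_1,p_1)\xrightarrow{\varphi_1^*}\cC_0(Y,p)$ equals $f\mapsto f\circ p$, which is precisely the map defining $p_\beta$. That $\Phi_1$ is proper: given a compact $L\subset Y_1$, I want $\Phi_1^{-1}(L)$ compact. Since $p_1$ is proper, $K := p_1(L)$ is compact in $X$ (continuous image of a compact set), so $L\subset p_1^{-1}(K)$, whence $\Phi_1^{-1}(L)\subset \Phi_1^{-1}(p_1^{-1}(K)) = p_\beta^{-1}(K)$, which is compact because $(\beta_p Y,p_\beta)$ is fibrewise compact; as $\Phi_1^{-1}(L)$ is a closed subset of a compact set, it is compact. (One could also just invoke that a continuous map between fibrewise compact spaces over $X$ that commutes with the projections is automatically proper, for the same reason.)

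The main obstacle — really the only non-formal point — is the very first verification, that $\varphi_1^*$ maps $\cC_0(Y_1,p_1)$ into $\cC_0(Y,p)$ and that the construction descends to the quotient-free setting, i.e. that one is genuinely getting a map of spectra and not merely of some larger algebras; this is where fibrewise compactness of the target is indispensable and is precisely the hypothesis that makes the universal property work (without it one would only land in $\beta_{p_1}Y_1$, not in $Y_1$). Everything else is routine Gelfand duality plus the density of $Y$ in $\beta_p Y$, so I would keep those parts brief.
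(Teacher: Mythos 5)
Your argument is correct and complete: the key verifications (that $\varphi_1^*\colon g\mapsto g\circ\varphi_1$ lands in $\cC_0(Y,p)$, that it is nondegenerate via pullbacks of bump functions on $X$, and that $\cC_0(Y_1,p_1)=\cC_0(Y_1)$ with spectrum $Y_1$ precisely because $p_1$ is proper) are exactly the points where the hypotheses are used, and the rest is routine Gelfand duality plus density of $Y$ in $\beta_pY$. The paper itself gives no proof here --- it only cites \cite[Proposition A.4]{AD16} --- but your Gelfand-duality route is the standard one used there, so there is nothing to flag.
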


We assume now that $(Y,p)$ is a $\cG$-space. A {\it $\cG$-equivariant fibrewise compactification} of the $\cG$-space $(Y,p)$ is a fibrewise compactification $(Z,\varphi, q)$ of $(Y,p)$ such that $(Z,q)$ is a $\cG$-space satisfying $\varphi(\gamma y) = \gamma\varphi(y)$ for every $(\gamma, y)\in \cG\,_s\!*_pY$.

We need to extend the $\cG$-action on $(Y,p)$  to a continuous $\cG$-action on $(\beta_p Y,p_\beta)$. Even in the case of a non-discrete group action $G\actson Y$  this is not possible in general: we have to replace $\beta Y$ by the spectrum of the $C^*$-algebra of bounded left-uniformly continuous functions on $G$ \cite{AD02}. 
In the groupoid case it is more complicated, and {\bf we will limit ourselves to the case of \'etale groupoids}.

\begin{prop}\label{prop:max_min} \cite[Proposition 2.5]{AD16} Let $(Y,p)$ be a $\cG$-space, where $\cG$ is an \'etale groupoid. The structure of $\cG$-space of $(Y,p)$ extends in a unique way to the  Stone-\v Cech fibrewise compactification $(\beta_p Y, p_\beta)$ and makes  it a $\cG$-equivariant fibrewise compactification.
\end{prop}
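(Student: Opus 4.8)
The plan is to establish uniqueness by a density argument, then construct the extended action by covering $\cG$ with open bisections, extending the associated local homeomorphisms of $\beta_pY$, and gluing them. For uniqueness, first I would check that $\cG\,_s\!*_pY$ is dense in $\cG\,_s\!*_{p_\beta}\beta_pY$: given $(\gamma,\xi)$ in the latter and a basic open neighbourhood $(V\times W)\cap(\cG\,_s\!*_{p_\beta}\beta_pY)$ with $\gamma\in V$ open in $\cG$ and $\xi\in W$ open in $\beta_pY$, one shrinks $W$ so that $p_\beta(W)\subseteq s(V)$ (possible because $s$ is open, so $s(V)$ is an open neighbourhood of $s(\gamma)=p_\beta(\xi)$), picks $y\in W\cap Y$ (possible since $Y$ is dense in $\beta_pY$), and lets $\gamma'$ be some element of $V$ with $s(\gamma')=p(y)$; then $(\gamma',y)$ lies in the neighbourhood. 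Since $\beta_pY$ is Hausdorff, a continuous extension of the action to $\cG\,_s\!*_{p_\beta}\beta_pY$, if it exists, is unique, and it then automatically makes $\varphi\colon Y\hookrightarrow\beta_pY$ equivariant.

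For the local extensions, I would use that $\cG$, being \'etale, is covered by open bisections $U$, i.e. open sets on which both $r$ and $s$ restrict to homeomorphisms onto open subsets of $X=\cG^{(0)}$; write $\tau_U=r\circ(s|_U)^{-1}\colon s(U)\to r(U)$ for the associated partial homeomorphism of $X$. The $\cG$-action on $Y$ then restricts to a homeomorphism $\alpha_U\colon p^{-1}(s(U))\to p^{-1}(r(U))$, $\alpha_U(y)=\gamma_y y$ with $\gamma_y=(s|_U)^{-1}(p(y))$, which covers $\tau_U$ (so $p\circ\alpha_U=\tau_U\circ p$) and has inverse $\alpha_{U^{-1}}$. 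Next I would show that for every open $V\subseteq X$ the set $p^{-1}(V)$ is open and dense in $p_\beta^{-1}(V)$, and that under the identification $\beta_pY=\widehat{\cC_0(Y,p)}$ the open subset $p_\beta^{-1}(V)$ is the spectrum of the ideal $\cC_0(V)\cdot\cC_0(Y,p)$, which restriction identifies with $\cC_0(p^{-1}(V),p)$; hence $p_\beta^{-1}(V)$ is the Stone-\v Cech fibrewise compactification of $(p^{-1}(V),p)$ over $V$. Transporting the target over $s(U)$ by means of $\tau_U$ and applying Proposition \ref{prop:universal}, the morphism $\alpha_U$ extends uniquely to a proper morphism $\bar\alpha_U\colon p_\beta^{-1}(s(U))\to p_\beta^{-1}(r(U))$, and doing the same for $\alpha_{U^{-1}}$ shows $\bar\alpha_U$ is a homeomorphism extending $\alpha_U$ and covering $\tau_U$.

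The gluing step is next. If $U_1,U_2$ are open bisections both containing $\gamma$, set $W=s(U_1\cap U_2)$, an open neighbourhood of $s(\gamma)$; injectivity of $s$ on each $U_i$ gives $s^{-1}(W)\cap U_i=U_1\cap U_2$, so $\alpha_{U_1}$ and $\alpha_{U_2}$ agree on $p^{-1}(W)$, whence $\bar\alpha_{U_1}$ and $\bar\alpha_{U_2}$ agree on $p_\beta^{-1}(W)$ by density and Hausdorffness. Thus $\Theta(\gamma,\xi):=\bar\alpha_U(\xi)$, for any open bisection $U$ through $\gamma$, is a well-defined map $\cG\,_s\!*_{p_\beta}\beta_pY\to\beta_pY$; it is continuous because the open sets $\Omega_U=\{(\gamma,\xi):\gamma\in U,\ s(\gamma)=p_\beta(\xi)\}$ cover the domain and $\Theta$ restricted to $\Omega_U$ is $\bar\alpha_U$ composed with the continuous projection $(\gamma,\xi)\mapsto\xi$. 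By construction $\Theta$ extends the $\cG$-action on $Y$, so $\varphi$ is equivariant, and the $\cG$-space axioms are checked bisection by bisection: $p_\beta(\Theta(\gamma,\xi))=\tau_U(p_\beta(\xi))=r(\gamma)$; for a unit $x$, taking $U$ an open subset of $\cG^{(0)}$ gives $\bar\alpha_U=\mathrm{id}$, so $\Theta(x,\xi)=\xi$; and for composable $\gamma_2,\gamma_1$, choosing bisections $U_i\ni\gamma_i$, the product $U_2U_1$ is an open bisection with $\alpha_{U_2U_1}=\alpha_{U_2}\circ\alpha_{U_1}$ on their common domain, so the same identity passes to the extensions by density, yielding $\Theta(\gamma_2\gamma_1,\xi)=\Theta(\gamma_2,\Theta(\gamma_1,\xi))$. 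Since $(\beta_pY,\varphi,p_\beta)$ is already known to be a fibrewise compactification, this makes it a $\cG$-equivariant one.

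I expect the main obstacle to be the gluing: verifying that the local extensions $\bar\alpha_U$ are mutually compatible. This is where \'etaleness enters essentially — two bisections through the same arrow coincide, as partial maps, on a neighbourhood of its source, so the action moves the whole fibre $p^{-1}(s(\gamma))$ as a block whose behaviour over a neighbourhood of $s(\gamma)$ is governed by the \'etale structure of $\cG$ near $\gamma$; this rigidity is precisely what fails for non-discrete groups and forces one there to enlarge $\cC_0(Y,p)$ to the algebra of (fibrewise) left-uniformly continuous functions. A secondary technical point is the identification, in the local step, of $p_\beta^{-1}(V)$ with the fibrewise Stone-\v Cech compactification of $(p^{-1}(V),p)$, i.e. the expected behaviour of the relevant $C_0$-ideals under restriction to an open subset of the base.
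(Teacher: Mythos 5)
Your argument is correct and follows essentially the same route as the proof in \cite[Proposition 2.5]{AD16}, to which the paper defers: identify $p_\beta^{-1}(V)$ with the Stone--\v Cech fibrewise compactification of $p^{-1}(V)$ over an open $V\subseteq X$, extend the partial homeomorphisms attached to open bisections via the universal property of Proposition \ref{prop:universal}, glue using agreement on overlaps, and obtain uniqueness from the density of $\cG\,_s\!*_pY$ in $\cG\,_s\!*_{p_\beta}\beta_pY$. Nothing essential is missing.
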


\begin{prop}\label{prop:universal1}\cite[Proposition 2.6]{AD16}  Let $\cG$ be an \'etale groupoid and $(Y,p)$, $(Y_1,p_1)$ be two $\cG$-spaces. We assume that $(Y_1,p_1)$ is fibrewise compact.  Let $\varphi_1: (Y,p) \to (Y_1,p_1)$ be a $\cG$-equivariant morphism. The  unique continuous map  $\Phi_1 : \beta_p Y \to Y_1$ which extends $\varphi_1$ is $\cG$-equivariant. 
\end{prop}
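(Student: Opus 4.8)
The plan is to combine the uniqueness from Proposition~\ref{prop:universal} with the $\cG$-equivariance of the $\cG$-action on $(\beta_pY, p_\beta)$ furnished by Proposition~\ref{prop:max_min}. The key observation is that equivariance of a continuous map between $\cG$-spaces is a \emph{closed} condition, so it suffices to verify it on the dense subset $\varphi(Y)\subset\beta_pY$, where it reduces to the assumed equivariance of $\varphi_1$.

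First I would invoke Proposition~\ref{prop:universal} to obtain the unique continuous proper morphism $\Phi_1:\beta_pY\to Y_1$ extending $\varphi_1$, so that $p_1\circ\Phi_1 = p_\beta$. Next I would set up the equivariance check: fix $(\gamma,z)\in\cG\,_s\!*_{p_\beta}(\beta_pY)$ and compare $\Phi_1(\gamma z)$ with $\gamma\,\Phi_1(z)$. Both sides are well-defined since $p_1(\Phi_1(z)) = p_\beta(z) = s(\gamma)$, so $(\gamma,\Phi_1(z))\in\cG\,_s\!*_{p_1}Y_1$. When $z=\varphi(y)$ for some $y\in Y$ with $s(\gamma)=p(y)$, we compute $\Phi_1(\gamma\varphi(y)) = \Phi_1(\varphi(\gamma y)) = \varphi_1(\gamma y) = \gamma\,\varphi_1(y) = \gamma\,\Phi_1(\varphi(y))$, using in turn the $\cG$-equivariance of $\varphi$ (Proposition~\ref{prop:max_min}), the fact that $\Phi_1$ extends $\varphi_1$, and the assumed equivariance of $\varphi_1$. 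So the two maps $(\gamma,z)\mapsto\Phi_1(\gamma z)$ and $(\gamma,z)\mapsto\gamma\,\Phi_1(z)$ agree on $\varphi(Y)\,_s\!*\,\cG$ (more precisely on the relevant fibred subset).

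To upgrade this from the dense subset to all of $\beta_pY$, I would argue that these two maps are continuous on the fibred product $\cG\,_s\!*_{p_\beta}(\beta_pY)$ — the first because the $\cG$-action on $\beta_pY$ and $\Phi_1$ are continuous, the second because $\Phi_1$ and the $\cG$-action on $Y_1$ are continuous — and that $Y_1$ is Hausdorff, so their coincidence set is closed. It then remains to note that $\varphi(Y)\,_s\!*_{p_\beta}\cG$ is dense in $\cG\,_s\!*_{p_\beta}(\beta_pY)$; this follows because $\varphi(Y)$ is dense in $\beta_pY$ and, for an \'etale groupoid, the fibred product over the open range map behaves well — locally the fibred product is a product with a discrete fibre, so density is inherited fibrewise. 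Hence the coincidence set, being closed and dense, is everything, and $\Phi_1$ is $\cG$-equivariant.

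The main obstacle is the density argument for the fibred product $\cG\,_s\!*_{p_\beta}(\beta_pY)$: one must be careful that density of $\varphi(Y)$ in $\beta_pY$ actually passes to the fibred product along $s$ and $p_\beta$. Here the \'etale hypothesis is doing real work — since $s$ is a local homeomorphism, $\cG$ is locally homeomorphic (via $s$) to open subsets of $X$, and the fibred product is locally homeomorphic to an open subset of $\beta_pY$, whence density transfers directly. (Without étaleness, $p_\beta$ need not be open, and one cannot even define the action, as the text notes.) Once this topological point is settled, everything else is a routine continuity-and-density argument, and uniqueness of $\Phi_1$ is already guaranteed by Proposition~\ref{prop:universal}, so no separate uniqueness statement is needed.
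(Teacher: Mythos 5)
Your argument is correct and is essentially the same as the one behind the cited result \cite[Proposition 2.6]{AD16} (the paper itself only cites it without reproducing the proof): one checks equivariance of $\Phi_1$ on the dense fibred subset $\cG\,_s\!*_p Y$ of $\cG\,_s\!*_{p_\beta}(\beta_pY)$ and concludes by continuity and Hausdorffness of $Y_1$, the openness of $s$ for an \'etale groupoid being exactly what makes the density of that fibred subset go through. No gaps.
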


\subsection {Amenability at infinity for \'etale groupoids} We view the fibrewise space $r:\cG \to \cG^{(0)}$ in an obvious way as a left $\cG$-space. Its $\cG$-equivariant fibrewise compactification $(\beta_r \cG, r_\beta)$ will play an important  role in the sequel because of the following observation.

 \begin{prop}\label{prop:SC} An \'etale groupoid $\cG$ is strongly amenable at infinity if and only if the Stone-\v Cech fibrewise compactification $(\beta_r \cG, r_\beta)$ is an amenable $\cG$-space.
\end{prop}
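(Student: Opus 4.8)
The plan is to prove the two implications separately, using the universal property of the Stone-\v Cech fibrewise compactification (Proposition~\ref{prop:universal1}) to transfer amenability between $\cG$-spaces.

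First I would handle the easy direction: if $(\beta_r\cG, r_\beta)$ is an amenable $\cG$-space, then $\cG$ is strongly amenable at infinity. Indeed, $(\beta_r\cG, r_\beta)$ is fibrewise compact by construction (the extension $r_\beta$ of the open map $r$ is proper, as noted after Definition~\ref{def:fibComp}), so it only remains to exhibit a continuous section $\sigma: X \to \beta_r\cG$ of $r_\beta$. For this, observe that the unit inclusion $X = \cG^{(0)} \hookrightarrow \cG$ is a continuous section of $r: \cG \to X$, and composing with the dense embedding $\varphi: \cG \hookrightarrow \beta_r\cG$ (which satisfies $r_\beta \circ \varphi = r$) gives the desired section $\sigma = \varphi|_X$. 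Hence $(\beta_r\cG, r_\beta)$ is an amenable fibrewise compact $\cG$-space with a continuous section, which is exactly condition (i) of Definition~\ref{def:ameninf}.

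For the converse, suppose $\cG$ is strongly amenable at infinity, witnessed by an amenable fibrewise compact $\cG$-space $(Y, p)$ together with a continuous section $\sigma: X \to Y$ of $p$. The idea is to produce a $\cG$-equivariant morphism $\varphi_1: (\cG, r) \to (Y, p)$ of $\cG$-spaces and then invoke the universal property. Define $\varphi_1(\gamma) = \gamma\,\sigma(s(\gamma))$; this makes sense since $p(\sigma(s(\gamma))) = s(\gamma)$, it is continuous because $\sigma$ and the action map are, it satisfies $p(\varphi_1(\gamma)) = r(\gamma)$, and the cocycle/associativity identity of Definition~\ref{def:Gspace} gives $\varphi_1(\gamma'\gamma) = \gamma'\gamma\,\sigma(s(\gamma)) = \gamma'(\gamma\,\sigma(s(\gamma))) = \gamma'\varphi_1(\gamma)$, i.e.\ $\varphi_1$ is $\cG$-equivariant (here the left $\cG$-action on $(\cG, r)$ is by left multiplication). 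By Proposition~\ref{prop:universal1} (applied with $Y_1 = Y$, which is fibrewise compact) the extension $\Phi_1: \beta_r\cG \to Y$ of $\varphi_1$ is a $\cG$-equivariant proper morphism of fibre spaces. This induces a continuous groupoid homomorphism $\Phi_1 \rtimes \mathrm{id}: \beta_r\cG \rtimes \cG \to Y \rtimes \cG$, equivariant over the identity on $\cG$, and in fact the fibres $\cG^x$ of both semidirect products agree. Since $Y \rtimes \cG$ is amenable by hypothesis, one concludes that $\beta_r\cG \rtimes \cG$ is amenable: the Reiter or Godement functions from Theorem~\ref{thm:caractAmen} for $Y \rtimes \cG$ pull back along $\Phi_1 \rtimes \mathrm{id}$ to the corresponding functions for $\beta_r\cG \rtimes \cG$, using that $\Phi_1$ is proper so that compactly supported data pull back to compactly supported data and tubes pull back to tubes. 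Therefore $(\beta_r\cG, r_\beta)$ is an amenable $\cG$-space.

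The main obstacle is the pull-back argument in the last step: one must check carefully that amenability of a semidirect product $Y\rtimes\cG$ transfers along a $\cG$-equivariant proper morphism $\Phi_1\colon(\beta_r\cG,r_\beta)\to(Y,p)$ covering the identity on $\cG$. The point is that such a $\Phi_1$ induces, fibrewise, a $\cG^x$-equivariant proper map from $(\beta_r\cG)^x$ to $Y^x$, and composing the positive-type kernels (or Reiter functions) on $Y\rtimes\cG$ with $\Phi_1$ yields admissible approximating data on $\beta_r\cG\rtimes\cG$; properness of $\Phi_1$ is precisely what guarantees that support and uniform-convergence conditions on compact sets are preserved. (Alternatively, one can cite the general fact that a $\cG$-space mapping $\cG$-equivariantly to an amenable $\cG$-space is itself amenable, which is of the same nature as \cite[Corollary 2.2.10]{AD-R}.) Modulo this transfer lemma, the proposition follows.
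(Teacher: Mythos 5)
Your proposal is correct and follows essentially the same route as the paper: the unit inclusion $\cG^{(0)}\subset\cG\subset\beta_r\cG$ gives the section in the easy direction, and in the converse one forms $\varphi(\gamma)=\gamma\,\sigma(s(\gamma))$, extends it via Proposition~\ref{prop:universal1}, and transfers amenability back along the resulting equivariant map (the paper simply cites \cite[Proposition 2.2.9 (i)]{AD-R} for this last step, applied to the closed invariant image $\Phi(\beta_r\cG)\subset Y$). The only cosmetic difference is that you sketch the transfer by pulling back Reiter/Godement data and invoke properness, whereas with the probability-measure formulation of amenability the pull-back $z\mapsto m_i^{\Phi(z)}$ works directly and properness is not needed.
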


\begin{proof} In one direction, we note that the inclusions $\cG^{(0)}\subset \cG\subset \beta_r \cG$ provide a continuous section for $r_\beta$ and therefore the amenability of the $\cG$-space $\beta_r\cG$ implies the strong amenability at infinity of $\cG$. Conversely, assume that $(Y,p,\sigma)$ satisfies the conditions of Definition \ref{def:ameninf}. We define a continuous $\cG$-equivariant morphism $\varphi : (\cG,r)\to (Y,p)$ by
$$\varphi(\gamma) = \gamma \sigma\circ s(\gamma).$$
Then, by Proposition \ref{prop:universal1}, $\varphi$ extends in a unique way to a continuous $\cG$-equivariant morphism $\Phi$ from $(\beta_r\cG, r_\beta)$ into $(Y,p)$.
Note that $\Phi(\beta_r\cG)$ is a closed $\cG$-invariant subset of $Y$. Now, it follows from \cite[Proposition 2.2.9 (i)]{AD-R} that $\cG\actson \beta_r\cG$ is amenable, since $\cG\actson \Phi(\beta_r\cG)$ is amenable.
\end{proof}

The space $\beta_r\cG$ has the serious drawback that it is not second countable in most of the cases but however it is $\sigma$-compact when $\cG$ is second countable. On the other hand, it has the advantage of being intrinsic. Moreover it is possible to build a {\it second countable} amenable fibrewise compact $\cG$-space out of any amenable fibrewise compact $\cG$-space, when $\cG$ is second countable  and \'etale \cite[Lemma 4.9]{AD16}.

\begin{thm}\label{prop:amen_inf} Let $\cG$ be a second countable \'etale groupoid. The following conditions are equi\-valent:
\begin{itemize}
\item[(i)] $\cG$ is strongly amenable at infinity;
\item[(ii)]  there exists a sequence $(k_n)$ of  bounded positive definite  continuous
kernels on $\cG *_r \cG$ supported in tubes such that
\begin{itemize}
\item[(a)] for every $n$, the restriction of $k_n$ to the diagonal of $\cG *_r \cG$ is uniformly bounded by $1$;
\item[(b)]  $\lim_n k_n = 1$ uniformly on tubes.
\end{itemize}
\end{itemize} 
\end{thm}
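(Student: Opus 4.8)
The plan is to reduce everything to the amenability of the single \'etale groupoid $\cG_\beta:=\beta_r\cG\rtimes\cG$, and then to set up a dictionary between positive-definite functions on $\cG_\beta$ and positive-definite kernels on $\cG*_r\cG$. By Proposition \ref{prop:SC}, condition (i) is equivalent to the amenability of $\cG_\beta$. Since $\cG$ is second countable, $\beta_r\cG$ is $\sigma$-compact, hence so is $\cG_\beta=\beta_r\cG\,_{r_\beta}\!*_r\cG$, and $\cG_\beta$ is \'etale with its Haar system of counting measures; so by Theorem \ref{thm:caractAmen}(iii), valid in the $\sigma$-compact case by Remark \ref{rem:Godement}(a), $\cG_\beta$ is amenable if and only if there is a sequence $(h_n)$ of continuous compactly supported positive-definite functions on $\cG_\beta$ with $h_n|_{\beta_r\cG}\le1$ and $h_n\to1$ uniformly on compact subsets of $\cG_\beta$.

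I would then establish the dictionary as follows. Regard $\cG*_r\cG$ as the dense open subset $\set{(\zeta,\delta)\in\cG_\beta:\zeta\in\cG\subset\beta_r\cG}$ of $\cG_\beta$. Given $h$ as above, set $k(\gamma,\gamma_1):=h(\gamma^{-1},\gamma^{-1}\gamma_1)$ for $(\gamma,\gamma_1)\in\cG*_r\cG$; conversely, given a bounded continuous positive-definite kernel $k\in\cC_t(\cG*_r\cG)$ supported in a tube, let $h$ be the continuous extension to $\cG_\beta$ of the function $(\zeta,\delta)\mapsto k(\zeta^{-1},\zeta^{-1}\delta)$ defined on $\cG*_r\cG$. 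The remaining verifications are routine. \emph{Positive-definiteness}: testing the defining inequality of the kernel $k$ at a unit $x\in\cG^{(0)}$ and using $\gamma_i^{-1}\cdot x=\gamma_i^{-1}$ turns it into the positive-definiteness inequality of $h$ at the unit $x$; for the converse, positive-definiteness of $h$ at a general unit $\omega\in\beta_r\cG$ follows, by continuity of $h$, density of $\cG$ in $\beta_r\cG$ and the fact that $r$ is a local homeomorphism (used to perturb the $\gamma_i$), from the case $\omega=\eta\in\cG$, where $h(\gamma_i^{-1}\eta,\gamma_i^{-1}\gamma_j)=k(\eta^{-1}\gamma_i,\eta^{-1}\gamma_j)$ with all $\eta^{-1}\gamma_i$ in the single fibre $\cG^{s(\eta)}$. \emph{Support}: a tube, i.e. a set on which $(\gamma,\gamma_1)\mapsto\gamma^{-1}\gamma_1$ ranges over a relatively compact subset of $\cG$, corresponds to a set on which $\delta$ ranges over a compact $K\subset\cG$, and since $r_\beta$ is proper the set $\set{(\omega,\delta)\in\cG_\beta:\delta\in K}$ is compact; so ``$k$ supported in a tube'' matches ``$h$ compactly supported'', and since moreover every compact subset of $\cG_\beta$ is contained in such a set, the tubes of $\cG*_r\cG$ and the compact subsets of $\cG_\beta$ correspond. \emph{Normalization and convergence}: $k(\gamma,\gamma)=h(\gamma^{-1},s(\gamma))$ is the value of $h$ at the unit $\gamma^{-1}$, so condition (ii)(a) matches $h|_{\beta_r\cG}\le1$, and since the extension to $\cG_\beta$ preserves suprema, $k_n\to1$ uniformly on tubes matches $h_n\to1$ uniformly on compact subsets of $\cG_\beta$.

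The one genuinely analytic step, and the one I expect to be the main obstacle, is the continuous extension used in the direction $k\mapsto h$: the function $(\zeta,\delta)\mapsto k(\zeta^{-1},\zeta^{-1}\delta)$, bounded and continuous on $\cG*_r\cG$ and supported over a compact subset of $\cG$ in the variable $\delta$, must extend to a continuous function on $\cG_\beta=\beta_r\cG\,_{r_\beta}\!*_r\cG$. Here the \'etale hypothesis is indispensable: each fibre $\cG^x$ is discrete, so fibrewise over the projection $(\omega,\delta)\mapsto\delta$ the extension of this function to the fibre $(\beta_r\cG)^{r(\delta)}$ is automatic, and joint continuity then follows from the description of $\beta_r\cG$ as the fibrewise Stone--\v Cech compactification of $(\cG,r)$ together with the universal properties recorded in Propositions \ref{prop:universal} and \ref{prop:universal1}; concretely one invokes the \'etale-groupoid machinery of \cite[Sections 2--4]{AD16}. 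By contrast, the reduction in the first paragraph and the bookkeeping in the second are formal, and it is precisely the continuity of $h$ on all of $\cG_\beta$ that breaks down for non-\'etale groupoids, in the same way that $\beta G$ must be replaced by the spectrum of the bounded left-uniformly continuous functions in the non-discrete group case (cf. the discussion preceding Proposition \ref{prop:max_min} and \cite{DL}).
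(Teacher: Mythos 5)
Your proposal is correct and follows essentially the same route as the paper: reduction via Proposition \ref{prop:SC} to the amenability of $\beta_r\cG\rtimes\cG$, the characterization of Theorem \ref{thm:caractAmen}(iii) in the $\sigma$-compact setting, and the same change of variables $k_n(\gamma,\gamma_1)=h_n(\gamma^{-1},\gamma^{-1}\gamma_1)$, with the continuous-extension step in the converse deferred to the machinery of \cite{AD16} exactly as the paper does. You have in fact supplied more of the bookkeeping (positive-definiteness at general units of $\beta_r\cG$, the tube/compact correspondence) than the paper's own sketch, and you correctly isolate the extension of $(\zeta,\delta)\mapsto k(\zeta^{-1},\zeta^{-1}\delta)$ to $\beta_r\cG\rtimes\cG$ as the genuinely nontrivial point where the \'etale hypothesis enters.
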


\begin{proof} By Theorem \ref{thm:caractAmen}, the groupoid $\beta_r\cG\rtimes \cG$ is amenable if and only if there exists a net $(h_n)$ of continuous positive definite functions in $\cC_c(\beta_r\cG\rtimes \cG)$, whose restriction to the set of units are bounded by 1, such that 
 $\lim_i h_i = 1$ uniformly on every compact subset of $\beta_r\cG\rtimes \cG$. 
 
 For $(\gamma_1,\gamma_2)\in \cG*_r\cG$ we set $k_n(\gamma_1,\gamma_2) = h_n(\gamma_1^{-1}, \gamma_1^{-1}\gamma_2)$. Then we check that $k_n$ is a  positive definite kernel bounded by $1$ on the diagonal, supported in a tube, and that $\lim_i k_n = 1$ uniformly on tubes.
 
 The converse is proved similarly (see \cite[Theorem 4.13, Theorem 4.15]{AD16} for details).  
\end{proof}

As observed in \cite[Remark 3.4, Remark 4.16]{AD16} it suffices in (ii) (a) above to require that each $k_n$ is bounded.

\section{About exactness for groupoids}

\subsection{Equivalence of several definitions of exactness for  \'etale groupoids} 

\begin{defn}
Let $\cG$ be a locally compact groupoid. We say that $\cG$ is  KW{\it-exact} if for  every $\cG$-equivariant exact sequence
$0\to I \to A \to B \to 0$
of $\cG$-$C^*$-algebras, the corresponding sequence
$$0 \to C^*_{r}(\cG,I) \to C^*_{r}(\cG,A) \to C^*_{r}(\cG,B)\to 0$$
of reduced crossed products is exact.
We say that $\cG$  is {\it $C^*$-exact} if $C^*_{r}(\cG)$ is an exact $C^*$-algebra. 
\end{defn}

For the definition of actions of locally compact groupoids on $C^*$-algebras and the construction of the corresponding crossed products we refer for instance to \cite{KS04} or \cite[\S 6.2]{AD16}.  As in the case of groups we easily see that amenability at infinity  implies KW-exactness which in turn  implies $C^*$-exactness (see for instance \cite[Theorem 7.2]{AD02} for groups and \cite[\S 7]{AD16} for groupoids). The main problem is to see if $C^*$-exactness of an \'etale groupoid implies its amenability at infinity as it is the case for discrete groups.

In this section we will adapt to the \'etale groupoid case our proof of the fact that an inner amenable locally compact $C^*$-exact group is amenable at infinity \cite[Theorem 7.3]{AD02}.  We need first to define inner amenability for groupoids.

\begin{defn} Let $\cG$ be a locally compact groupoid. Following \cite[Definition 2.1]{Roe}, we say that a closed subset $A$ of $\cG \times \cG$ is {\it proper}  if for every compact subset $K$ of $\cG$, the sets $(K\times \cG) \cap A$ and $(\cG \times K) \cap A$ are compact. We say that a function $f: \cG\times \cG \to \C$ is {\it properly supported} if its support is  proper.
\end{defn}

Given a groupoid $\cG$, let us observe that the product $\cG\times \cG$ has an obvious structure of groupoid, with $X\times X$ as set of units, where $X= \cG^{(0)}$. Observe that a map $f:\cG\times \cG\to \C$ 
is positive definite if and only if, given an integer $n$, $(x,y)\in X\times X$ and $\gamma_1,\dots, \gamma_n\in \cG^x$, $\eta_1,\dots,\eta_n\in \cG^y$, the matrix $[f(\gamma_i^{-1}\gamma_j, \eta_i^{-1}\eta_j)]_{i,j}$ is non-negative.

\begin{defn}\label{def:wia} We say that a locally compact groupoid $\cG$ is {\it  inner amenable}\index{inner amenable l. c. groupoid}  if for every compact subset $K$ of $\cG$ and for every $\varepsilon >0$
there exists a continuous  positive definite function $f$ on the product groupoid $\cG\times \cG$, properly supported, such that $f(x,y)\leq 1$ for all $x,y\in \cG^{(0)}$ and such that $|f(\gamma,\gamma) - 1| < \varepsilon$ for all $\gamma \in K$.
\end{defn}

This terminology is justified by the fact that for a  locally compact group the above property is equivalent to the notion of inner amenability introduced in Section 1. That this property for groups implies inner amenability is proved in \cite{CT}; the reverse is almost immediate \cite[Proposition 4.6]{AD02}. 

Every amenable locally compact groupoid $\cG$ is  inner amenable since the groupoid $\cG\times \cG$ is amenable and therefore Theorem \ref{thm:caractAmen} applies to this groupoid. Every closed subgroupoid of an inner amenable groupoid is inner amenable \cite[Corollary 5.6]{AD16}.
Every semidirect product groupoid $X\rtimes G$  is inner amenable as soon as $G$ is an inner amenable locally compact group  \cite[Corollary 5.9]{AD16}. We do not know whether every \'etale groupoid is inner amenable.

\begin{thm}\label{thm:equiv} Let $\cG$ be a second countable  inner amenable \'etale groupoid. Then the following conditions are equivalent:
\begin{itemize}
\item[(1)] $\cG$ is strongly amenable at infinity.
\item[(2)] $\cG$ is  amenable at infinity.
\item[(3)] $\beta_r \cG\rtimes \cG$ is nuclear.
\item[(4)] $\beta_r \cG\rtimes \cG$ is exact.
\item[(5)] $\cG$ is KW-exact.
\item[(6)] $C^*_{r}(\cG)$ is exact.

\end{itemize}
\end{thm}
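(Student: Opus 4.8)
The plan is to prove the circle of implications $(1)\Rightarrow(2)\Rightarrow(4)\Rightarrow(6)\Rightarrow(1)$ together with $(1)\Leftrightarrow(3)$ and $(3)\Rightarrow(5)\Rightarrow(6)$, so that all six conditions collapse. Several of these arrows are already available: $(1)\Rightarrow(2)$ is trivial from Definition~\ref{def:ameninf}; $(5)\Rightarrow(6)$ is the general fact that KW-exactness implies $C^*$-exactness (apply the reduced-crossed-product exactness to the trivial action, noting $C^*_r(\cG)=C^*_r(\cG,\cC_0(X))$); and $(3)\Rightarrow(4)$ is immediate since nuclear $C^*$-algebras are exact. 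The arrow $(3)\Rightarrow(5)$ follows the classical pattern: if $\beta_r\cG\rtimes\cG$ is nuclear, then for a $\cG$-equivariant exact sequence $0\to I\to A\to B\to 0$ one embeds the crossed products by $\cG$ into crossed products by $\beta_r\cG\rtimes\cG$ (using the section $X\hookrightarrow\beta_r\cG$ to get a $\cG$-equivariant unital inclusion $\cC_0(X)\hookrightarrow\cC(\beta_r\cG)$, hence $\cC_0(\beta_r\cG,r_\beta)$-algebra structures and iterated crossed products $A\rtimes_r\cG = (A\otimes_{\cC_0(X)}\cC_0(\beta_r\cG,r_\beta))\rtimes_r(\beta_r\cG\rtimes\cG)$), and nuclearity of the outer groupoid forces exactness of the inner-crossed-product functor; this is the groupoid analogue of \cite[Theorem 7.2]{AD02} and is recorded in \cite[\S 7]{AD16}.

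For $(1)\Leftrightarrow(3)$: by Proposition~\ref{prop:SC}, strong amenability at infinity says exactly that $\cG\actson\beta_r\cG$ is amenable, i.e. that the groupoid $\beta_r\cG\rtimes\cG$ is amenable; and an amenable $\sigma$-compact locally compact groupoid with Haar system has nuclear full $C^*$-algebra, with full and reduced coinciding — the relevant results apply here because $\beta_r\cG$ is $\sigma$-compact when $\cG$ is second countable (Remark~\ref{rem:Godement}(a)). Conversely, nuclearity of $\beta_r\cG\rtimes\cG$ together with the presence of enough structure (the unit space $\beta_r\cG$ carries the $\cG$-action with a continuous section) feeds into the equivalence ``amenable $\Leftrightarrow$ inner amenable $+$ nuclear reduced $C^*$-algebra''; but in fact the cleanest route is $(3)\Rightarrow(6)$ trivially (a sub-$C^*$-algebra, namely $C^*_r(\cG)\subset C^*_r(\beta_r\cG\rtimes\cG)$, of a nuclear algebra — wait, more carefully: $C^*_r(\cG)$ sits inside $\cC(\beta_r\cG)\rtimes_r\cG$ because $\beta_r\cG$ is fibrewise compact with a section, and that crossed product is a quotient/corner of $C^*_r(\beta_r\cG\rtimes\cG)$, hence exact), and then close the loop via $(6)\Rightarrow(1)$.

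The real content, and the expected main obstacle, is $(6)\Rightarrow(1)$: from exactness of $C^*_r(\cG)$ one must manufacture, using inner amenability, a sequence of bounded continuous positive definite kernels on $\cG*_r\cG$ supported in tubes and tending to $1$ uniformly on tubes, which by Theorem~\ref{prop:amen_inf} gives strong amenability at infinity. This is where I would transpose the argument of \cite[Theorem 7.3]{AD02}. The strategy: exactness of $C^*_r(\cG)$ is reformulated (via the local characterization of exactness, or via nuclear embeddability / a suitable slice-map property) as an approximation property for the regular representation on $\ell^2_{\cC_0(X)}(\cG)$ — roughly, the identity of $C^*_r(\cG)$ is approximated in an appropriate point-norm sense by finite-rank-type completely positive maps that are ``properly supported'' on $\cG\times\cG$. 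One then symmetrizes/averages these using the inner-amenability data: the continuous properly-supported positive definite functions $f$ on $\cG\times\cG$ with $f(\gamma,\gamma)\approx 1$ from Definition~\ref{def:wia} are precisely what is needed to convert a properly supported completely positive approximant into a genuine positive definite kernel on $\cG*_r\cG$ that is supported in a tube (the proper support of $f$ on $\cG\times\cG$ translates, under $(\gamma_1,\gamma_2)\mapsto(\gamma_1^{-1},\gamma_1^{-1}\gamma_2)$, into tube support), controlled by $1$ on the diagonal, and close to $1$ on a prescribed tube. Verifying continuity, positive-definiteness of the resulting kernel, the tube-support condition, and the uniform convergence on tubes — while keeping track of the étale structure (so that Haar systems are counting measures and $\cG^x$ is discrete, which is what makes the kernel manipulations legitimate) and the second-countability reductions to sequences — is the delicate part. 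I expect the hardest single point to be extracting from abstract $C^*$-exactness an approximation that is simultaneously completely positive, properly supported on $\cG\times\cG$, and compatible with the module structure over $\cC_0(X)$, since this is exactly the step where the group proof of \cite{AD02} uses features special to groups that must be re-engineered for étale groupoids; the inner-amenability hypothesis is then exactly the ingredient that repairs the lack of a unit/invariant vector.
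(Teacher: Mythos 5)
Your strategy for the hard implication $(6)\Rightarrow(1)$ is essentially the paper's: reformulate exactness of $C^*_{r}(\cG)$ as nuclearity of the regular representation $\Lambda: C^*_{r}(\cG)\to\cB_{\cC_0(X)}(\ell^2_{\cC_0(X)}(\cG))$ (a Hilbert-module version of Kirchberg's nuclear-embeddability criterion, proved via the Kasparov absorption and Kasparov--Voiculescu theorems), approximate $\Lambda$ by factorable completely positive contractions with \emph{compact} support, and feed in the properly supported positive definite function $f$ on $\cG\times\cG$ coming from inner amenability. You correctly identify the change of variables that turns proper support into tube support and correctly locate the delicate point. The one enabling ingredient you gesture at but do not isolate is Renault's lemma that $\gamma\mapsto f_\gamma$ is a \emph{continuous positive definite map from $\cG$ into} $C^*_{r}(\cG)$; it is this, combined with complete positivity of the approximant $\Phi$ and cutoff vectors $\xi_\varphi$ supported on bisections, that makes $k(\gamma,\gamma_1)=\scal{\xi_\varphi,\Phi(f_{\gamma^{-1}\gamma_1})\xi_{\varphi_1}}(r(\gamma))$ a continuous positive definite kernel supported in a tube and close to $1$ where required. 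So your sketch of $(6)\Rightarrow(1)$ would fill in to the paper's argument.

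There is, however, a genuine logical gap elsewhere: condition (2) is never closed back into the cycle. The implication $(2)\Rightarrow(4)$ announced in your plan is not argued anywhere in the body (you only treat $(3)\Rightarrow(4)$), and it is not trivial, because a fibrewise compact amenable $\cG$-space $(Y,p)$ witnessing (2) need not admit a section, so you cannot produce a $\cG$-equivariant map $\beta_r\cG\to Y$ via Proposition~\ref{prop:universal1} the way you do for (1). As written you establish only $(1)\Rightarrow(2)$, so (2) is implied by the other five conditions but not shown to imply any of them. The paper closes this by the arrow $(2)\Rightarrow(5)$: given a $\cG$-equivariant exact sequence, one tensors over $\cC_0(X)$ with $\cC_0(Y)$ to get $(Y\rtimes\cG)$-algebras, uses that the reduced crossed product functor by the amenable groupoid $Y\rtimes\cG$ is exact, and uses properness of $p$ to embed the $\cG$-crossed products into the $(Y\rtimes\cG)$-crossed products. (Alternatively one can get $(2)\Rightarrow(4)$ directly from the diagonal $\cG$-space $\beta_r\cG*Y$, which is amenable because it is a $(Y\rtimes\cG)$-space and whose crossed product contains $C^*_{r}(\beta_r\cG\rtimes\cG)$ since the projection $\beta_r\cG*Y\to\beta_r\cG$ is proper.) One of these arguments must be supplied for the theorem to be proved.
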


The following implications are immediate or already known:
$$\xymatrix{(1) \ar@{=>}[r] \ar@{=>}[d] &  (2)\ar@{=>}[r] &(5)\ar@{=>}[d]\\
(3) \ar@{=>}[r] & (4) \ar@{=>}[r] & (6)} $$
The implication (1) $\Ra$ (3) is proved in \cite[Corollary 6.2.14]{AD-R} for second countable locally compact groupoids, but this result extends to the groupoid $\beta_r \cG\rtimes \cG$ when $\cG$ is second countable locally compact and \'etale (see \cite[Proposition 7.2]{AD16}).

It remains  to show that (6) implies (1). We give below an idea of the proof which is detailed in \cite[\S 8]{AD16}. 

$\blacktriangleright$ The first step is to extend Kirchberg's characterization of exact $C^*$-algebras as being nuclearly embeddable into some 
$\cB(H)$ as follows.

\begin{lem}\cite[Lemma 8.1]{AD16}\label{lem:Kirch} Let $A$, $B$ be two separable $C^*$-algebras, where $B$ is nuclear. Let $\cE$ be a countably generated Hilbert $C^*$-module over $B$.  Let $\iota : A \to \cB_B(\cE)$ be an embedding of $C^*$-algebras. Then $A$ is exact if and only if $\iota$ is nuclear.
\end{lem}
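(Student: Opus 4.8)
The plan is to reduce this to Kirchberg's original theorem characterizing exact $C^*$-algebras. Recall that Kirchberg proved that a separable $C^*$-algebra $A$ is exact if and only if, for \emph{some} (equivalently, every) faithful representation $A\subset\cB(H)$ on a separable Hilbert space, the inclusion $A\hookrightarrow\cB(H)$ is a nuclear map. So the content of the lemma is that one may replace $\cB(H)$ by $\cB_B(\cE)$ for an arbitrary nuclear separable $B$ and countably generated Hilbert $B$-module $\cE$, and still detect exactness by nuclearity of the embedding.

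First I would dispose of the easy direction. If $\iota:A\to\cB_B(\cE)$ is nuclear, then since $B$ is nuclear, $\cB_B(\cE)$ behaves well under tensoring: I would use Kasparov's stabilization theorem to embed $\cE$ as a complemented submodule of the standard module $\cH_B=\ell^2\otimes B$, so that $\cB_B(\cE)$ sits (corner-wise, via a projection) inside $\cB_B(\cH_B)$. Composing $\iota$ with this corner embedding keeps the map nuclear. Now $\cB_B(\cH_B)$ is, up to the usual identifications, the multiplier algebra $M(\cK\otimes B)$, and since $B$ is nuclear, $\cK\otimes B$ is nuclear; a $C^*$-algebra that admits a nuclear embedding into the multiplier algebra of a nuclear $C^*$-algebra is exact (this is essentially Kirchberg's theorem again, using that exactness passes to $C^*$-subalgebras and that nuclearity of the target makes the relevant tensor-product sequences exact). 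Hence $A$ is exact.

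For the converse — the direction one actually wants in the application — suppose $A$ is exact. By Kirchberg's theorem, for any faithful nondegenerate representation $A\subset\cB(H_0)$ with $H_0$ separable, the inclusion is nuclear: there are nets (sequences, by separability) of c.c.p.\ maps $\phi_n:A\to M_{k_n}(\C)$ and $\psi_n:M_{k_n}(\C)\to\cB(H_0)$ with $\psi_n\circ\phi_n\to\mathrm{id}$ point-norm. The task is to transport this to $\cB_B(\cE)$. Since $B$ is separable and nuclear, it has a faithful state, hence a faithful GNS representation $B\subset\cB(H_B)$ with $H_B$ separable; then $\cE$, being countably generated, yields a separable Hilbert space $\cE\otimes_B H_B =: H$, and there is a natural $*$-homomorphism $\cB_B(\cE)\to\cB(H)$. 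The key point is that this map is \emph{faithful} on $\iota(A)$: one chooses the state on $B$ (or rather a family of states / a faithful representation) carefully — using that $\cE$ is full or by passing to $\cE\oplus\cH_B$ — so that no nonzero element of $\iota(A)\subset\cB_B(\cE)$ acts as zero on $H$. With $\iota(A)\subset\cB(H)$ faithful and $A$ exact, Kirchberg's theorem gives that $\iota(A)\hookrightarrow\cB(H)$ is nuclear. Finally I would need to lift this nuclearity back up from $\cB(H)$ to $\cB_B(\cE)$: here one uses that $B$ is nuclear to see that c.c.p.\ maps into $\cB(H)=\cB_B(\cE)\otimes$-something can be dilated/approximated by maps factoring through $\cB_B(\cE)$, or more directly, that the inclusion $\cB_B(\cE)\hookrightarrow\cB(H)$ together with a conditional-expectation-type slice map (available because $B$ is nuclear, so $B\subset\cB(H_B)$ has a completely positive "trace-like" splitting after amplification) lets one pull the finite-dimensional approximations of $\iota(A)\hookrightarrow\cB(H)$ back to approximations of $\iota:A\to\cB_B(\cE)$.

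The main obstacle I anticipate is precisely this last transport step: producing, from nuclearity of the composite $A\to\cB_B(\cE)\to\cB(H)$, the nuclearity of the first leg $A\to\cB_B(\cE)$. This does not follow formally (nuclearity of a composite does not in general factor), and the argument must genuinely exploit the nuclearity of $B$ — concretely, that the embedding $\cB_B(\cE)\hookrightarrow\cB(\cE\otimes_B H_B)$ admits a "slicing" by normal c.c.p.\ maps indexed by vectors in $H_B$, which is where $B\subset\cB(H_B)$ having a nuclear (hence injective-tensorially well-behaved) structure is used to reassemble a net of c.c.p.\ maps $A\to M_{k_n}\to\cB_B(\cE)$ converging to $\iota$. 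I would expect the cleanest route to be to cite the Hilbert-module version of Kirchberg's theorem if it is available in the literature (e.g.\ via the identification of exactness with the property that $A\otimes_{\min}(\cdot)$ preserves exact sequences, applied to $\cB_B(\cE)$-coefficient sequences), and otherwise to carry out the GNS-and-slice argument sketched above.
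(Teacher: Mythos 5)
The forward direction of your argument is fine, and in fact simpler than you make it: composing the nuclear map $\iota$ with any faithful representation of $\cB_B(\cE)$ on a Hilbert space already yields a nuclear embedding $A\hookrightarrow\cB(H)$, and Kirchberg's nuclear-embeddability theorem applies; neither the stabilization theorem nor the nuclearity of $B$ is needed for that implication. The genuine gap is in the converse, and it sits exactly where you flag it: passing from nuclearity of the composite $A\to\cB_B(\cE)\to\cB(\cE\otimes_B H_B)$ back to nuclearity of the first leg $A\to\cB_B(\cE)$. This cannot be repaired by a ``slice map'' or ``conditional-expectation-type splitting'': there is in general no completely positive projection of $\cB(\cE\otimes_B H_B)$ onto the image of $\cB_B(\cE)$, and, more fundamentally, if $A\subset C\subset\cB(H)$ with $A\hookrightarrow\cB(H)$ nuclear it simply does not follow that $A\hookrightarrow C$ is nuclear --- take $C=A=C^*_r(\F_2)$, which is exact (hence nuclearly embeddable in $\cB(H)$) while $\mathrm{id}_A$ is not nuclear. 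Whatever makes the lemma true must therefore be a genuine absorption property of $\cB_B(\cE)$, not a formal transport of finite-dimensional approximations along the localization. Your closing suggestion to ``cite the Hilbert-module version of Kirchberg's theorem'' is circular, since that is precisely the statement to be proved.

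The route indicated in the paper supplies exactly the missing absorption ingredient, via the two theorems of Kasparov it names. First, the Kasparov stabilization (absorption) theorem gives $\cE\oplus\cH_B\cong\cH_B$ with $\cH_B=\ell^2\otimes B$, so $\cB_B(\cE)$ sits as a corner of $\cB_B(\cH_B)=M(\cK\otimes B)$ and one is reduced to embeddings into $M(\cK\otimes B)$, with $\cK\otimes B$ separable and nuclear; this is where countable generation of $\cE$ enters. Second, the Kasparov--Voiculescu theorem (the generalized Voiculescu theorem for Hilbert modules, \cite[Theorem 6]{Kasp}) lets one compare the given embedding, up to adding an absorbing summand and approximate unitary equivalence, with a representation of the form $\pi\otimes 1$ where $\pi:A\to\cB(\ell^2)$ is a faithful representation; since $A$ is exact, $\pi$ may be taken nuclear by Kirchberg's theorem, and $\pi\otimes 1$ is then visibly nuclear as a map into $M(\cK\otimes B)$, the nuclearity of $B$ being consumed here. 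Nuclearity is stable under these modifications, which yields nuclearity of $\iota$. To salvage your outline you would need to replace the ``GNS-and-slice'' step by an appeal to these two results; as written, the key step of your proof is asserted rather than proved, and the mechanism you propose for it does not work.
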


The two main ingredients of the proof of this lemma are the Kasparov absorption theorem and the Kasparov-Voiculescu theorem \cite[Theorem 2, Theorem 6]{Kasp} that allow us to  reduce the situation to the case of Hilbert spaces.

$\blacktriangleright$ The second step is the following approximation theorem. Recall that  a completely positive contraction $\Phi:A\to B$  between two $C^*$-algebras is {\it factorable} if there exists an integer $n$ and completely positive contractions $\psi: A \to M_{n}(\C)$, $\varphi :  M_{n}(\C) \to B$ such that $\Phi = \varphi \circ \psi$. A map $\Psi :C^*_{r}(\cG) \to B$ is said to have a {\it compact support} if there exists a compact subset $K$ of $\cG$ such that $\Psi(f) = 0$ for every $f \in \cC_c(\cG)$ with $(\hbox{Supp}\, f) \cap K = \emptyset$.

\begin{thm}\cite[Corollary 8.4]{AD16}\label{cor:approx} Let $B$ be a $C^*$-algebra and let $\Phi :  C^{*}_{r}(\cG) \to B$ be a nuclear completely positive contraction.
Then for every $\varepsilon > 0$ and every $a_1,\dots,a_k \in  C^{*}_{r}(\cG)$
there exists a factorable   completely positive contraction $\Psi :  C^{*}_{r}(\cG) \to B$, with  compact support,
such that  $$\| \Psi(a_i) - \Phi(a_i) \| \leq \varepsilon \,\,\, \hbox{for}\,\,\, i = 1,\dots,k.$$
\end{thm}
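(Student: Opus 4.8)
The plan is to peel off a matrix-algebra factor using the nuclearity of $\Phi$, and then to \emph{re-implement} the resulting matrix-valued factor inside the regular representation of $\cG$, where ``compact support'' becomes automatic; this is the \'etale-groupoid analogue of the argument of \cite[Theorem 7.3]{AD02}. Since $\Phi$ is nuclear it is a point-norm limit of factorable completely positive contractions, so first I would fix $n\in\N$ and completely positive contractions $\psi_0\colon C^*_r(\cG)\to M_n(\C)$ and $\varphi_0\colon M_n(\C)\to B$ with $\|\varphi_0\circ\psi_0(a_i)-\Phi(a_i)\|\le\varepsilon/2$ for all $i$. As $\varphi_0$ is contractive, it suffices to produce a completely positive contraction $\bar\psi\colon C^*_r(\cG)\to M_n(\C)$ \emph{with compact support} such that $\|\bar\psi(a_i)-\psi_0(a_i)\|\le\varepsilon/2$, and then put $\Psi:=\varphi_0\circ\bar\psi$.

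Next I would isolate where compact support can come from. Because $\|\Lambda(f)\|=\sup_{x\in X}\|\Lambda_x(f)\|$, the algebra $C^*_r(\cG)$ is faithfully and non-degenerately represented by $\Lambda=\bigoplus_{x\in X}\Lambda_x$ on the Hilbert space $H=\bigoplus_{x\in X}L^2(\cG^x,\lambda^x)$; since $\cG$ is \'etale, $H$ has the orthonormal basis $\{\delta_\gamma:\gamma\in\cG\}$ (with $\delta_\gamma\in L^2(\cG^{r(\gamma)})$), and for $f\in\cC_c(\cG)$ one computes $\langle\delta_\gamma,\Lambda(f)\delta_{\gamma_1}\rangle=f(\gamma^{-1}\gamma_1)$ when $r(\gamma)=r(\gamma_1)$ and $0$ otherwise. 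Consequently, for any $m\in\N$ and any vectors $\eta^1,\dots,\eta^n$ in $H^{\oplus m}$ whose supports, viewed inside $\cG$ through $H^{\oplus m}\cong\bigoplus_{x}L^2(\cG^x,\lambda^x)\otimes\C^m$, are finite subsets $S_1,\dots,S_n$ of $\cG$, the completely positive map $a\mapsto[\langle\eta^i,\Lambda^{(m)}(a)\eta^j\rangle]_{i,j}$, where $\Lambda^{(m)}$ is the $m$-fold amplification, vanishes on every $f\in\cC_c(\cG)$ whose support is disjoint from the finite, hence compact, set $K_0:=\bigcup_{i,j}S_i^{-1}S_j$; thus it has compact support.

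The heart of the proof is then to realize $\psi_0$, to within $\varepsilon/4$ on $a_1,\dots,a_k$, by a map of exactly this type. I would extend $\psi_0$ by the Arveson extension theorem to a completely positive contraction $\widetilde\psi_0\colon\cB(H)\to M_n(\C)$; approximate $\widetilde\psi_0$ in the point-weak$^*$ topology by \emph{normal} completely positive contractions $\cB(H)\to M_n(\C)$; truncate the Kraus decomposition of such a normal contraction to a finite sum, which converges in point-norm on positive operators and hence on all of $\cB(H)$, obtaining a map of the form $T\mapsto[\langle\eta^i,(T\oplus\cdots\oplus T)\eta^j\rangle]_{i,j}$ with $\eta^i\in H^{\oplus m}$; and then restrict back to $C^*_r(\cG)$. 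Since $M_n(\C)$ is finite-dimensional, point-weak$^*$ approximation is point-norm approximation there, so this yields a completely positive contraction of the shape of the previous paragraph that is $\varepsilon/4$-close to $\psi_0$ on the $a_i$. Finally, each $\cG^x$ being discrete, vectors supported on a finite subset of $\cG$ are dense in $H$, hence in $H^{\oplus m}$, so I would replace each $\eta^i$ by a finitely supported $\bar\eta^i$; by the second paragraph the resulting $\bar\psi$ has compact support, it stays $\varepsilon/2$-close to $\psi_0$ on the $a_i$ once the $\bar\eta^i$ are chosen close enough, and a harmless rescaling of the $\bar\eta^i$ restores exact contractivity. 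Combined with the first paragraph, this proves the statement.

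The step I expect to be the main obstacle is this re-implementation of $\psi_0$. An abstract completely positive map $C^*_r(\cG)\to M_n(\C)$ dilates through some representation of $C^*_r(\cG)$ that may bear no spatial relation to $\Lambda$, so one cannot simply ``localize'' $\psi_0$ itself. What rescues the argument is that $\Lambda$ is \emph{faithful} --- equivalently, every representation of $C^*_r(\cG)$ is weakly contained in the regular one --- so $\psi_0$ can genuinely be \emph{replaced}, not merely approximated in the abstract, by a map implemented by vectors of $\Lambda$, after which the finite propagation of $\Lambda(\cC_c(\cG))$ in the basis $\{\delta_\gamma\}$ finishes the job. A minor technical wrinkle is that $H$ is non-separable when $X$ is uncountable; one either checks that the tools invoked (the Arveson extension theorem, the weak$^*$-density of normal completely positive maps into $M_n(\C)$) are insensitive to separability, or one works from the start inside a separable reducing subspace adapted to $a_1,\dots,a_k$.
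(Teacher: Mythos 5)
Your argument is correct and follows essentially the same route as the proof the paper points to (\cite[Corollary 8.4]{AD16}, adapting \cite[Theorem 7.3]{AD02}): peel off the $M_n(\C)$ factor by nuclearity, then re-implement the $M_n(\C)$-valued piece by vector functionals of the faithful regular representation with finitely supported vectors, for which compact support is automatic from the matrix coefficients $\langle\delta_\gamma,\Lambda(f)\delta_{\gamma_1}\rangle=f(\gamma^{-1}\gamma_1)$. Your detour through the Arveson extension theorem and normal functionals on $\cB(H)$ is one standard way to obtain the weak$^*$-density step (equivalent to the weak$^*$-density of vector states of a faithful representation used in the source), and the non-separability of $H$ is indeed harmless there.
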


$\blacktriangleright$ Finally we need the following result due to Jean Renault (private communication). Given $f:\cG\times \cG \to \C$, we set $f_\gamma(\gamma') = f(\gamma, \gamma')$.

\begin{lem}\cite[Lemma 8.5]{AD16} Let $\cG$ be a locally compact groupoid.
\begin{itemize}
\item[(a)] Let $f\in \cC_c(\cG)$ be a continuous positive definite function. Then, $f$ viewed as an element of $C^*_{r}(\cG)$ is a positive element.
\item[(b)] Let $f:\cG\times \cG \to \C$ be a properly supported positive definite function. Then $\gamma\mapsto f_\gamma$ is a continuous  positive definite function  from $\cG$ into  $C^*_{r}(\cG)$.
\end{itemize}
\end{lem}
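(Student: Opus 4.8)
The plan is to pass to the fibrewise picture of the regular representation and reduce everything to the elementary fact that a continuous positive definite kernel integrates to a non‑negative form. Recall from Section 2 that $\Lambda$ identifies $C^*_{r}(\cG)$ with a $C^*$-subalgebra of $\cB_{\cC_0(X)}(\cE)$, $\cE = L^2_{\cC_0(X)}(\cG,\lambda)$, that $\cC_c(\cG)$ is dense in $\cE$, and that $\Lambda$ decomposes as a field $(\Lambda_x)_{x\in X}$ acting on the Hilbert spaces $L^2(\cG^x,\lambda^x)$. Since an adjointable operator $T$ on a Hilbert module is $\geq 0$ if and only if $\scal{T\xi,\xi}\geq 0$ for every $\xi$, and $\scal{\Lambda(g)\xi,\xi}(x)$ is the scalar pairing on $L^2(\cG^x,\lambda^x)$ of $\xi|_{\cG^x}$ against $\Lambda_x(g)(\xi|_{\cG^x})$, positivity of an element of $C^*_{r}(\cG)$, or of a matrix over it, can be tested on each fibre against the restrictions $\xi|_{\cG^x}\in \cC_c(\cG^x)$ of functions $\xi\in\cC_c(\cG)$. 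The single analytic input I will use is the standard fact that if $k$ is a continuous positive definite kernel on $\cG^x\times\cG^x$, then $\int\!\int\overline{\xi(\gamma)}\,k(\gamma,\gamma')\,\xi(\gamma')\rd\lambda^x(\gamma)\rd\lambda^x(\gamma')\geq 0$ for every $\xi\in\cC_c(\cG^x)$, together with its matrix‑valued version; this follows either from the dilation $k(\gamma,\gamma')=\scal{V(\gamma),V(\gamma')}$ of a positive definite kernel (the double integral becoming $\norm{\int\xi(\gamma)V(\gamma)\rd\lambda^x(\gamma)}^2$) or by approximating the integral by finite sums over a Borel partition of $\supp\xi$, using uniform continuity of $k$ on compacta and the finiteness of $\sup_{u}\lambda^u(L)$ for $L$ compact.

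\emph{Proof of (a).} The hypothesis applied to the $2\times 2$ minor with indices the unit $r(\gamma)$ and $\gamma$ gives $f(\gamma^{-1})=\overline{f(\gamma)}$ for all $\gamma\in\cG$, so $f=f^*$ and $\Lambda(f)$ is self‑adjoint. On the fibre over $x$, $\Lambda_x(f)$ is the integral operator with kernel $(\gamma,\gamma')\mapsto f(\gamma^{-1}\gamma')$, which is positive definite on $\cG^x\times\cG^x$ precisely because $f$ is positive definite. The sublemma gives $\scal{\Lambda_x(f)\xi,\xi}\geq 0$ for $\xi\in\cC_c(\cG^x)$, hence $\Lambda_x(f)\geq 0$ for every $x$, hence $\scal{\Lambda(f)\xi,\xi}\geq 0$ in $\cC_0(X)$ for $\xi$ in the dense subspace $\cC_c(\cG)$, and therefore $\Lambda(f)\geq 0$.

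\emph{Proof of (b).} First, $f_\gamma\in\cC_c(\cG)$ for each $\gamma$: it is continuous, and its support is contained in the second projection of $(\{\gamma\}\times\cG)\cap\supp f$, which is compact by properness of $\supp f$; hence $\gamma\mapsto\Lambda(f_\gamma)$ is a well‑defined map $\cG\to C^*_{r}(\cG)$. For continuity, fix $\gamma_0$ and a compact neighbourhood $V$ of $\gamma_0$; then the supports of all $f_\gamma$ with $\gamma\in V$ lie in the common compact set $L$, the second projection of the compact set $(V\times\cG)\cap\supp f$, so $\norm{f_\gamma-f_{\gamma_0}}_I\leq C_L\,\sup_{\delta\in\cG}\abs{f(\gamma,\delta)-f(\gamma_0,\delta)}$ with $C_L<\infty$ depending only on $L$ (because $\sup_u\lambda^u(K)<\infty$ for compact $K$); the right‑hand side tends to $0$ as $\gamma\to\gamma_0$ by uniform continuity of $f$ on the relatively compact set $V\times L$, so $\Lambda(f_\gamma)\to\Lambda(f_{\gamma_0})$ in $C^*_{r}(\cG)$. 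For positive definiteness, fix $x\in X$ and $\eta_1,\dots,\eta_m\in\cG^x$; we must show that $M:=\bigl[\Lambda(f_{\eta_i^{-1}\eta_j})\bigr]_{i,j=1}^m$ is $\geq 0$ in $M_m\bigl(C^*_{r}(\cG)\bigr)$. It is self‑adjoint: the same $2\times 2$ argument in $\cG\times\cG$ gives $\overline{f(\zeta,\omega)}=f(\zeta^{-1},\omega^{-1})$ for all $\zeta,\omega\in\cG$, whence $(f_{\eta_j^{-1}\eta_i})^*=f_{\eta_i^{-1}\eta_j}$ and $(M^*)_{ij}=M_{ij}$. Testing positivity on the fibre over $y\in X$ against $\xi_1,\dots,\xi_m\in\cC_c(\cG^y)$, the relevant quadratic form equals
$$\sum_{i,j=1}^m\int_{\cG^y}\!\int_{\cG^y}\overline{\xi_i(\delta)}\,f\bigl(\eta_i^{-1}\eta_j,\ \delta^{-1}\delta'\bigr)\,\xi_j(\delta')\rd\lambda^y(\delta)\rd\lambda^y(\delta').$$
The key observation is that $\bigl(\eta_i^{-1}\eta_j,\ \delta^{-1}\delta'\bigr)=(\eta_i,\delta)^{-1}(\eta_j,\delta')$ in the product groupoid $\cG\times\cG$, and that all the elements $(\eta_i,\delta)$, for $i\in\{1,\dots,m\}$ and $\delta\in\cG^y$, lie in the single fibre $\cG^x\times\cG^y=(\cG\times\cG)^{(x,y)}$; hence the kernel on the index set $\{1,\dots,m\}\times\cG^y$ sending $\bigl((i,\delta),(j,\delta')\bigr)$ to $f(\eta_i^{-1}\eta_j,\delta^{-1}\delta')$ is positive definite by the hypothesis on $f$, and the displayed expression is $\geq 0$ by the matrix version of the sublemma (discretising the $\lambda^y$-integrals). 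Thus $M\geq 0$, which is exactly the assertion that $\gamma\mapsto f_\gamma$ is a $C^*_{r}(\cG)$-valued positive definite function.

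The only real difficulty is bookkeeping: in (b) one must keep carefully apart the matrix index, the fibre variable and the discretisation variable, and must recognise the product of pairs $(\eta_i,\delta)^{-1}(\eta_j,\delta')$ as a genuine product in $\cG\times\cG$ so that the positive‑definiteness hypothesis on $f$ becomes applicable. The passage from the finite positive‑definiteness inequality to its integrated form, though routine, is where the continuity of $f$ and of the Haar system are actually used.
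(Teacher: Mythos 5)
Your argument is correct. Note that the paper itself gives no proof of this lemma (it is quoted from \cite[Lemma 8.5]{AD16}, attributed to Renault), and your route --- identify $\Lambda_x(f)$ (resp.\ the matrix $[\Lambda(f_{\eta_i^{-1}\eta_j})]$) with an integral operator whose kernel is positive definite on the fibre $\cG^x$ (resp.\ on $\{1,\dots,m\}\times\cG^y$ inside the product groupoid), and then discretise --- is exactly the standard argument behind that reference, so there is nothing genuinely different to compare. The only point worth flagging is that in (b) you implicitly use continuity of $f$ (to get $f_\gamma\in\cC_c(\cG)$ and the $\norm{\cdot}_I$-continuity of $\gamma\mapsto f_\gamma$); this hypothesis is omitted in the statement as printed but is present in the only place the lemma is applied, so your reading is the intended one.
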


$\blacktriangleright$ We can now now proceed to the proof of (6) $\Ra$ (1)  in Theorem \ref{thm:equiv}.

\begin{proof}[Proof of (6) $\Ra$ (1)]

We fix a compact subset $K$ of $\cG$ and $\varepsilon >0$. We want to find a continuous bounded positive definite kernel $k\in \cC_t(\cG*_r\cG)$ such that $k(\gamma,\gamma) \leq 1$ for all $\gamma\in \cG$ and $\abs{k(\gamma,\gamma_1) - 1} \leq \varepsilon$ whenever $\gamma^{-1}\gamma_1 \in K$ (see Theorem \ref{prop:amen_inf}).

We set $\cE =  \ell^2_{\cC_0(X)}(\cG)$ with $X = \cG^{(0)}$. Recall that $\lambda^x$ is the counting measure on $\cG^x$. We first choose a bounded, continuous positive definite function $f$ on $\cG\times \cG$, properly supported,
such that $|f(\gamma,\gamma) - 1| \leq \varepsilon/2$ for $\gamma \in K$ and $f(x,y)\leq 1$ for $(x,y)\in X\times X$. By Lemma \ref{lem:Kirch} the regular representation $\Lambda$ is nuclear. Then, using Theorem \ref{cor:approx}, we find  a compactly supported completely positive contraction  $\Phi: C^*_{r}(\cG) \to \cB_{\cC_0(X)}(\cE)$   such that\footnote{We write $f_\gamma$ instead of $\Lambda(f_\gamma)$
for simplicity of notation.}
$$\norm{\Phi(f_\gamma) - f_\gamma} \leq \varepsilon/2$$
for $\gamma \in K$. 
We also choose a continuous function $\xi : X \to [0,1]$ with compact support such that $\xi(x) = 1$ if $x\in s(K)\cup r(K)$. 

Let $(\gamma,\gamma_1)\in \cG*_r\cG$. We choose an open bisection $S$  such that $\gamma\in S$ and a continuous function $\varphi: X\to [0,1]$, with compact support in $r(S)$ such that $\varphi (x) = 1$ on a neighborhood of $r(\gamma)$. We denote by $\xi_\varphi$ the continuous function on $\cG$ with compact support  (and thus $\xi_\varphi \in \cE$) such that
$$\xi_\varphi(\gamma') = 0 \,\,\hbox{if}\,\, \gamma'\notin S,\quad \xi_\varphi(\gamma') = \varphi\circ r(\gamma')\xi\circ s(\gamma') \,\,\hbox{if}\,\,\gamma'\in S.$$
Note that $\norm{\xi_\varphi}_\cE \leq 1$.
We define $\xi_{\varphi_1}$ similarly with respect to $\gamma_1$. 

Then we set
\begin{align*}
k(\gamma,\gamma_1) &=  \langle \xi_\varphi,
\Phi(f_{\gamma^{-1}\gamma_{1}})\xi_{\varphi_1}\rangle (r(\gamma))\\
&= \xi\circ s(\gamma)\big(\Phi(f_{\gamma^{-1}\gamma_{1}})\xi_{\varphi_1}\big)(\gamma).
\end{align*}
We observe that $k(\gamma,\gamma_1)$ does not depend on the choices of $S,\varphi, S_1,\varphi_1$.

Since $\gamma \mapsto f_\gamma$ is a continuous positive definite function from $\cG$ into $C^*_{r}(\cG)$ and since $\Phi$ is completely positive,  we see that $k$ is  a continuous and positive definite kernel. Moreover, there is a compact subset $K_1$ of $\cG$ such that $\Phi(f_\gamma) = 0$ when $\gamma\notin K_1$, because  $\Phi$ is compactly supported, and  $f$  is properly supported.  It follows that $k$ is  supported in a tube.
 
We fix  $(\gamma,\gamma_1)\in \cG*_r\cG$  such that $\gamma^{-1}\gamma_1 \in K$. Then we have
$$\abs{k(\gamma,\gamma_1) -1}\leq \varepsilon/2 + \abs{ \langle \xi_\varphi,f_{\gamma^{-1}\gamma_{1}}\xi_{\varphi_1}
\rangle(r(\gamma)) -1},$$
and
$$
 \langle \xi_\varphi, f_{\gamma^{-1}\gamma_{1}}\xi_{\varphi_1}\big\rangle(r(\gamma))\\
=  \xi\circ s(\gamma)\xi\circ s(\gamma_1)f(\gamma^{-1}\gamma_{1},\gamma^{-1}\gamma_{1}).
$$

Observe that $s(\gamma)\in r(K)$ and $s(\gamma_1)\in s(K)$ and therefore $ \xi\circ s(\gamma) = 1=  \xi\circ s(\gamma_1)$.
It follows that 
$$\abs{k(\gamma,\gamma_1) -1}\leq \varepsilon/2 + \abs{f(\gamma^{-1}\gamma_{1},\gamma^{-1}\gamma_{1})-1}\leq \varepsilon.$$
To end the proof it remains to check that $k$ is a bounded kernel. Since this kernel is positive definite, it suffices to show that $\gamma\mapsto k(\gamma,\gamma)$ is bounded on $\cG$. We have
 $$k(\gamma,\gamma) =  \langle \xi_\varphi,\Phi(f_{s(\gamma)})\xi_{\varphi}\rangle (r(\gamma))  \leq \norm{\Phi(f_{s(\gamma)})}.$$
  Our claim follows, since  $\Phi(f_{s(\gamma)})= 0$ when $s(\gamma)\notin K_1\cap X$ and $x\mapsto f_x$ is continuous from the compact set  $K_1\cap X$ into $C_r^{*}(\cG)$. 
\end{proof}

\begin{rem}\label{rem:partial} Let $\alpha : \Gamma\actson X$ be an action of a discrete group  on a locally compact space $X$.  Since the groupoid $X\rtimes G$ is inner amenable, Theorem \ref{thm:equiv} applies. Therefore $C_0(X)\rtimes_r \Gamma$ is exact if and only if the groupoid $X\rtimes \Gamma$ is KW-exact.
More generally, this holds for any partial action such that the domains of the partial homeomorphisms $\alpha_t$ are closed (in addition to being open). Indeed it not difficult to show that  the groupoid $X\rtimes \Gamma$ is inner amenable (directly, or using the fact that such partial actions admit a Hausdorff globalisation   \cite[Proposition 5.7]{Exel_Book}). 

For general partial actions of $\Gamma$ the situation is not clear. We do not know whether $X\rtimes \Gamma$ is inner amenable in this case. If 
$\Gamma$ is exact the semidirect product groupoid $X\rtimes \Gamma$ is strongly amenable at infinity \cite[Proposition 4.23]{AD16} and therefore $C^*_{r}(X\rtimes \Gamma)$ is exact. This had been previously shown in \cite[Corollary 2.2]{AEK} by using  Fell bundles. 
\end{rem}

\subsection{Inner exactness} We introduce now a very weak notion of exactness.  First let us make some reminders. Let $\cG$ be a locally  compact groupoid. Recall that a subset $E$ of $X= \cG^{(0)}$ is said to be {\it invariant} if $s(\gamma)\in E$ if and only if $r(\gamma)\in E$. 
Let $F$ be a closed invariant subset of $X$ and set $U = X\setminus F$. It is well-known that the inclusion
$\iota :  \cC_c(\cG(U) )\to \cC_c(\cG)$ extends to an injective homomorphism from $C^*(\cG(U))$ into $C^*(\cG)$ and from
$C^*_{r}(\cG(U))$ into $C^*_{r}(\cG)$. Similarly, the restriction map $\pi: \cC_c(\cG) \to \cC_c(\cG(F))$ extends to a surjective homomorphism from $C^*(\cG)$ onto $C^*(\cG(F))$ and from
$C^*_{r}(\cG)$ onto $C^*_{r}(\cG(F))$. Moreover the sequence 
$$0 \rightarrow C^*(\cG(U)) \rightarrow C^*(\cG) \rightarrow C^*(\cG(F)) \rightarrow 0$$
is exact. For these facts, we refer to \cite[page 102]{Ren_book},  or to \cite[Proposition 2.4.2]{Ram} for a detailed proof. On the other hand, the corresponding sequence 
\begin{equation}\label{eq:ie}
0 \rightarrow C^*_{r}(\cG(U)) \rightarrow C^*_{r}(\cG) \rightarrow C^*_{r}(\cG(F)) \rightarrow 0
\end{equation}
with respect to the reduced groupoid $C^*$-algebras is not always exact, as shown in \cite[Remark 4.10]{Ren91} (see also Proposition \ref{prop:HLS} below).

\begin{defn}\label{def:inamen} A locally compact groupoid  such that the sequence  \eqref{eq:ie} is exact for every closed invariant subset $F$ of $X$ called KW-{\it inner exact} or simply {\it inner exact}.\index{inner exact groupoid}
\end{defn}
 
We will see that the class of inner exact groupoids plays a role in the study of the (WCP). It is  also interesting in itself and now plays a role in other contexts (see for instance \cite{BL}, \cite{BCS}, \cite{BEM}).  This class is quite large. It includes all locally compact groups and more generally the groupoids that act with dense orbits on their space of units. This class is stable under equivalence of groupoids \cite[Theorem 6.1]{Lal17}. Of course, KW-exact groupoids are inner exact.

\subsection{The case of group bundle groupoids} We first need to recall  some definitions.

\begin{defn}\label{def:bundle_C*}\cite[Definition 1.1]{KW95}  A {\it field} (or bundle)
{\it of $C^*$-algebras over a locally compact space} $X$ is a triple $\cA = (A, \set{\pi_x: A \to A_x}_{x\in X},X)$ where $A$, $A_x$ are $C^*$-algebras, and where $\pi_x$ is a surjective $*$-homomorphism such that 
\begin{itemize}
\item[(i)] $\set{\pi_x: x\in X}$ is faithful, that is, $\norm{a} = \sup_{x\in X}\norm{\pi_x(a)}$ for every $a\in A$;
\item[(ii)] for $f\in \cC_0(X)$ and $a\in A$, there is an element $fa\in A$ such that $\pi_x(fa) = f(x)\pi_x(a)$ for $x\in X$;
\item[(iii)] the inclusion of $\cC_0(X)$ into the center of the multiplier algebra of $A$ is non-degenerate.

\end{itemize}
We say that the field is (usc) {\it upper semi-continuous} (resp. (lsc) {\it lower semi-continuous}) if the function $x\mapsto \norm{\pi_x(a)}$ is upper semi-continuous (resp. lower semi-continuous) for every $a\in A$.

If for each $a\in A$, the function $x\mapsto \norm{\pi_x(a)}$ is in $\cC_0(X)$, we will say that $\cA$ is a {\it continuous field of $C^*$-algebras}\footnote{In  \cite{KW95}, this is called a continuous bundle of $C^*$-algebras.}.
\end{defn}

Recall that a $\cC_0(X)$-algebra  $A$ is a $C^*$-algebra equipped with a non-degenerate homomorphism  from $\cC_0(X)$ into the multiplier algebra of $A$ (see \cite[Appendix C.1]{Will}).
 For $x\in X$ we denote by $\cC_x(X)$ the subalgebra of $\cC_0(X)$ of functions that vanish at $x$. Note that a $\cC_0(X)$-algebra  $A$ gives rise to an usc field of $C^*$-algebras with fibres $A_x = A/\cC_x(X)A$ (see \cite[Proposition 1.2]{Rie} or \cite[Appendix C.2]{Will}). We will use the following characterization of usc fields of $C^*$-algebras.
 
\begin{lem}\cite[Lemma 2.3]{KW95}, \cite[Lemma 9.4]{AD16}\label{lem:usc} Let $\cA$ be a field of $C^*$-algebras on a locally compact space $X$.  The function $x\mapsto \norm{\pi_x(a)}$ is upper semi-continuous at $x_0$ for every $a\in A$ if and only if $\ker \pi_{x_0}= \cC_{x_0}(X)A$
\end{lem}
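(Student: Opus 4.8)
The plan is to prove the characterization of upper semi-continuity at a point $x_0$ in terms of the kernel $\ker \pi_{x_0} = \cC_{x_0}(X)A$. I will prove the two implications separately, working with a fixed $a \in A$ and exploiting the $\cC_0(X)$-module structure encoded in condition (ii) of Definition \ref{def:bundle_C*}.

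First I would observe that in any field of $C^*$-algebras the inclusion $\cC_{x_0}(X)A \subset \ker \pi_{x_0}$ is automatic: if $f \in \cC_{x_0}(X)$ then $\pi_{x_0}(fa) = f(x_0)\pi_{x_0}(a) = 0$. So the content of the lemma is really about the reverse inclusion. For the direction ``$\ker\pi_{x_0} = \cC_{x_0}(X)A$ implies usc at $x_0$'', I would argue contrapositively or directly: given $a \in A$ and $\varepsilon > 0$, I want a neighborhood $U$ of $x_0$ with $\norm{\pi_x(a)} \le \norm{\pi_{x_0}(a)} + \varepsilon$ for $x \in U$. The key trick is to use an approximate unit argument for the ideal $\cC_{x_0}(X)$: pick $f \in \cC_{x_0}(X)$ with $0 \le f \le 1$ that is close to $1$ off a small neighborhood of $x_0$, and consider $a - fa \in A$. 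One checks that $\pi_{x_0}(a - fa) = \pi_{x_0}(a)$, and moreover that $\norm{\pi_x(a - fa)}$ can be controlled: where $f(x)$ is near $1$, $\pi_x(a-fa)$ is small, and near $x_0$ one uses that $\pi_{x_0}(a-fa) = \pi_{x_0}(a)$ together with the hypothesis to see that $a - fa$ itself (not just its image at $x_0$) has norm close to $\norm{\pi_{x_0}(a)}$. Actually the cleanest route: since $\ker\pi_{x_0} = \cC_{x_0}(X)A$, for the given $a$ and $\varepsilon$ there is $g \in \cC_{x_0}(X)$ and $b \in A$ with $\norm{a - (\pi_{x_0}(a)\text{-lift}) } $... — more precisely, lift $\pi_{x_0}(a)$ via some $a_0 \in A$ and note $a - a_0 \in \ker\pi_{x_0} = \cC_{x_0}(X)A$, so $a - a_0 = \sum f_i b_i$ approximately with $f_i \in \cC_{x_0}(X)$; then $\norm{\pi_x(a)} \le \norm{\pi_x(a_0)} + \norm{\pi_x(a-a_0)}$ and the second term tends to $0$ as $x \to x_0$ since each $f_i(x) \to 0$, while $\norm{\pi_x(a_0)}$ can be arranged close to $\norm{\pi_{x_0}(a_0)} = \norm{\pi_{x_0}(a)}$ by choosing $a_0$ of the form $h a$ for suitable $h \in \cC_0(X)$ with $h(x_0)=1$ and small support. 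I would assemble these estimates to get the neighborhood $U$.

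For the converse, ``usc at $x_0$ for every $a$ implies $\ker\pi_{x_0} = \cC_{x_0}(X)A$'': take $a \in \ker\pi_{x_0}$; I must show $a \in \cC_{x_0}(X)A$ (the closure is implicit since $\cC_{x_0}(X)A$ is a closed ideal — being an ideal in a $C^*$-algebra it is automatically closed once we take the closed linear span, and by Cohen factorization $\cC_{x_0}(X)\overline{A} = \overline{\cC_{x_0}(X)A}$). Given $\varepsilon > 0$, usc of $x \mapsto \norm{\pi_x(a)}$ at $x_0$ with value $\norm{\pi_{x_0}(a)} = 0$ gives a neighborhood $U$ of $x_0$ with $\norm{\pi_x(a)} < \varepsilon$ for $x \in U$. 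Choose $f \in \cC_0(X)$ with $0 \le f \le 1$, $f = 1$ off $U$, and $f(x_0) = 0$; then $f \in \cC_{x_0}(X)$ so $fa \in \cC_{x_0}(X)A$, and I claim $\norm{a - fa} \le \varepsilon$. Indeed $\pi_x(a - fa) = (1 - f(x))\pi_x(a)$; for $x \notin U$ this is $0$, and for $x \in U$ its norm is at most $\norm{\pi_x(a)} < \varepsilon$. By faithfulness of $\{\pi_x\}$ (condition (i)), $\norm{a - fa} = \sup_x \norm{\pi_x(a-fa)} \le \varepsilon$. Since $\varepsilon$ was arbitrary, $a$ lies in the closure of $\cC_{x_0}(X)A$, completing the inclusion.

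The main obstacle I anticipate is the ``usc implies kernel formula'' direction being subtle only in that one must be careful that $\cC_{x_0}(X)A$ is genuinely closed (so that the approximation argument actually places $a$ inside it, not merely in its closure) — but this follows from the standard fact that a product of a closed ideal of $\cC_0(X)$ with $A$ inside a $\cC_0(X)$-algebra is closed, equivalently via the Cohen–Hewitt factorization theorem applied to the non-degenerate action. The other direction requires only elementary estimates with the module structure and faithfulness, so the real care is bookkeeping the neighborhoods and the norm inequalities; there is no deep obstruction, and the whole argument is a routine manipulation of $\cC_0(X)$-algebra structure once the factorization fact is invoked.
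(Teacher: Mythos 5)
The paper itself gives no proof of this lemma (it is imported from \cite{KW95} and \cite{AD16}), so I am comparing your argument with the standard one. The genuine gap is in the direction ``$\ker\pi_{x_0}=\cC_{x_0}(X)A$ implies usc at $x_0$''. The whole content of that implication is to produce, for given $\varepsilon>0$, an element $a_0\in A$ with $\pi_{x_0}(a_0)=\pi_{x_0}(a)$ and \emph{global} norm $\norm{a_0}\le\norm{\pi_{x_0}(a)}+\varepsilon$; once you have it, $\norm{\pi_x(a)}\le\norm{a_0}+\norm{\pi_x(a-a_0)}$ and the second term tends to $0$ as $x\to x_0$ because $a-a_0\in\cC_{x_0}(X)A$ is approximated by finite sums $\sum f_ib_i$ with $f_i(x_0)=0$, exactly as you say. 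But your concrete proposal $a_0=ha$ with $h(x_0)=1$ and small support does not produce such an $a_0$: one has $\norm{\pi_x(ha)}=\abs{h(x)}\,\norm{\pi_x(a)}$, which for $x$ near $x_0$ is essentially $\norm{\pi_x(a)}$ itself --- the very quantity you are trying to bound --- and nothing forces $\norm{ha}$ to be close to $\norm{\pi_{x_0}(a)}$. The correct source of $a_0$ is the elementary $C^*$-fact that the surjection $\pi_{x_0}$ induces an isometric isomorphism $A/\ker\pi_{x_0}\cong A_{x_0}$, so that $\norm{\pi_{x_0}(a)}=\inf\set{\norm{a-b}:b\in\ker\pi_{x_0}}$; pick $b\in\ker\pi_{x_0}=\cC_{x_0}(X)A$ realizing this infimum up to $\varepsilon$ and set $a_0=a-b$. (Your first sketch, with $\norm{a-fa}$ approaching $\norm{\pi_{x_0}(a)}$ along an approximate unit $(f)$ of $\cC_{x_0}(X)$, is a correct alternative, but the convergence $\lim_f\norm{a-fa}=\norm{a+\ker\pi_{x_0}}$ is precisely the point that needs proof --- it combines the quotient-norm identity for an approximate unit of a closed ideal with the hypothesis that $(f)$ acts as an approximate unit for $\cC_{x_0}(X)A=\ker\pi_{x_0}$ --- and you assert it rather than prove it. Note this is the only place where the hypothesis on the kernel enters, so it cannot be elided.)

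The converse direction of your argument is essentially correct, up to one fixable slip: a function $f\in\cC_0(X)$ cannot equal $1$ off a neighborhood $U$ of $x_0$ unless $X\setminus U$ is compact. You must either use condition (iii) of Definition \ref{def:bundle_C*} to extend the module action to $\cC_b(X)$ (and then check, e.g.\ via the Cohen factorization you invoke, that $fa\in\cC_{x_0}(X)A$ for $f\in\cC_b(X)$ with $f(x_0)=0$), or first replace $a$ by $ea$ with $e\in\cC_c(X)$, $0\le e\le 1$ and $\norm{a-ea}$ small, and require $f=1$ only on the compact set $\mathrm{supp}(e)\setminus U$. With that repair, and the Cohen factorization remark guaranteeing that $\cC_{x_0}(X)A$ is closed, that half is complete.
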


 We apply this fact to the reduced $C^*$-algebra of a  groupoid group bundle $\cG$  as defined in Example \ref{exs:groupoids}  (b). The structure of $\cC_0(X)$-algebra of  the $C^*$-algebra $C^*_{r}(\cG)$ is defined by $(f h)(\gamma) = f\circ r(\gamma) h(\gamma)$ for $f\in \cC_0(X)$ and $h\in \cC_c(\cG)$ (see \cite[Lemma 2.2.4]{Ram}, \cite[\S 5]{LR}).   We set $U_x = X \setminus \set{x}$. Then we have $C^*_{r}(\cG(U_x)) = \cC_x(X) C^*_{r}(\cG)$.   We get that $C^*_{r}(\cG)$   is an usc  field of $C^*$-algebras over $X$ with fibre $C^*_{r}(\cG)/\cC_x(X) C^*_{r}(\cG)= C^*_{r}(\cG)/C^*_{r}(\cG(U_x)) $ at $x$.

On the other hand, $(C^*_{r}(\cG), \set{\pi_x : C^*_{r}(\cG)\to C^*_{r}(\cG(x))})$ is lower semi-continuous (see 
 \cite[Th\'eor\`eme 2.4.6]{Ram} or \cite[Theorem 5.5]{LR}). Then it follows from Lemma \ref{lem:usc} that  the function $x\mapsto \norm{\pi_x(a)}$ is continuous at $x_0$ for every $a\in C^*_{r}(\cG)$ if and only if  the following sequence  is exact:
  $$0\rightarrow C^*_{r}(\cG(U_{x_0})) \rightarrow C^*_{r}(\cG) \stackrel{\pi_{x_0}}{\rightarrow} C^*_{r}(\cG(x_0)) \rightarrow 0.$$

\begin{prop}\label{prop:innexact} Let $\cG$ be a  group bundle groupoid  on $X$. The following conditions are equi\-valent:
\begin{itemize}
\item[(i)] $\cG$ is inner exact;
\item[(ii)] for every $x\in X$ the following sequence is exact:
$$
0 \rightarrow C^*_{r}(\cG(X\setminus \set{x})) \rightarrow C^*_{r}(\cG) \rightarrow C^*_{r}(\cG(x)) \rightarrow 0.
$$

\item[(iii)] $C^*_{r}(\cG)$ is a continuous field of $C^*$-algebras over $X$ with fibres $C^*_{r}(\cG(x)) $.
\end{itemize}
\end{prop}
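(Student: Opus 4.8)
The plan is to prove Proposition~\ref{prop:innexact} by showing (i) $\Leftrightarrow$ (ii) and (ii) $\Leftrightarrow$ (iii), using the general facts about group bundle groupoids that were assembled just before the statement. The key structural input, already established, is that for a group bundle groupoid $\cG$ the $C^*$-algebra $C^*_{r}(\cG)$ carries a canonical $\cC_0(X)$-algebra structure, that the ideal associated to the open invariant set $U_x = X\setminus\{x\}$ is exactly $C^*_{r}(\cG(U_x)) = \cC_x(X)C^*_{r}(\cG)$, and that the quotient by this ideal surjects onto $C^*_{r}(\cG(x))$; moreover $(C^*_{r}(\cG),\{\pi_x\})$ is always a lower semi-continuous field.

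For (i) $\Rightarrow$ (ii): this is immediate, since the singletons $\{x\}$ are closed and (because $r=s$) invariant, so the sequence in (ii) is precisely the sequence \eqref{eq:ie} for $F = \{x\}$. For (ii) $\Rightarrow$ (i): here one must handle an \emph{arbitrary} closed invariant $F$, not just singletons. In a group bundle groupoid every subset of $X$ is invariant, so $F$ is just an arbitrary closed set and $\cG(F)$ is the restriction of the bundle to $F$. I would argue that exactness of \eqref{eq:ie} for all such $F$ follows from exactness at all singletons together with the faithfulness of the family $\{\pi_x\}$: the kernel of the restriction homomorphism $C^*_{r}(\cG)\to C^*_{r}(\cG(F))$ contains $C^*_{r}(\cG(U))$ with $U = X\setminus F$, and to see equality one shows that an element killed in every fibre $C^*_{r}(\cG(x))$ with $x\in F$ already lies in $C^*_{r}(\cG(U))$; this uses that $C^*_{r}(\cG(F))$ is itself faithfully fibred over $F$ and that the ideal $C^*_{r}(\cG(U))$ is $\cC_0(X)$-absorbing in the appropriate sense. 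The slightly delicate point is that one needs exactness of the singleton sequences to upgrade from ``vanishes in each fibre over $F$'' to ``belongs to the ideal'', since a priori only the lower semi-continuous field structure is available. I expect this reduction from general closed invariant sets to singletons to be the main obstacle; the cleanest route is probably to invoke that a $\cC_0(X)$-algebra whose singleton sequences are exact is a continuous field, and that for continuous fields the restriction sequence over any closed set is exact (this is a standard fact about continuous $C^*$-bundles), which simultaneously gives the implication into (iii).

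For (ii) $\Leftrightarrow$ (iii): this was essentially done in the paragraph preceding the proposition. Since $C^*_{r}(\cG)$ is always an upper semi-continuous field with fibres $C^*_{r}(\cG)/\cC_x(X)C^*_{r}(\cG) = C^*_{r}(\cG)/C^*_{r}(\cG(U_x))$ by Lemma~\ref{lem:usc}, and always a lower semi-continuous field with the maps $\pi_x : C^*_{r}(\cG)\to C^*_{r}(\cG(x))$, the only thing preventing $x\mapsto \norm{\pi_x(a)}$ from being continuous is the possible discrepancy between the usc fibre $C^*_{r}(\cG)/C^*_{r}(\cG(U_x))$ and the lsc fibre $C^*_{r}(\cG(x))$. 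By Lemma~\ref{lem:usc} again, continuity of $x\mapsto\norm{\pi_x(a)}$ at $x_0$ for all $a$ is equivalent to $\ker\pi_{x_0} = \cC_{x_0}(X)C^*_{r}(\cG) = C^*_{r}(\cG(U_{x_0}))$, which is exactly exactness of the singleton sequence at $x_0$. Running this equivalence over all $x_0\in X$ gives (ii) $\Leftrightarrow$ (iii), and combined with the cycle above closes the proof. I would write it up as: (i) $\Rightarrow$ (ii) trivial; (ii) $\Leftrightarrow$ (iii) by Lemma~\ref{lem:usc} and the lsc/usc remarks; (iii) $\Rightarrow$ (i) by the continuous-bundle restriction fact applied to the closed set $F$.
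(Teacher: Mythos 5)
Your proposal is correct and follows essentially the same route as the paper: the cycle (i) $\Rightarrow$ (ii) $\Rightarrow$ (iii) $\Rightarrow$ (i), with (ii) $\Leftrightarrow$ (iii) read off from Lemma~\ref{lem:usc} together with the lsc/usc observations preceding the statement. The only difference is that where you invoke the ``standard fact'' that restriction of a continuous field to a closed set yields an exact sequence, the paper writes out the two-line argument: for $a$ with $\pi_x(a)=0$ on $F$ and $\varepsilon>0$, the set $K=\set{x:\norm{\pi_x(a)}\geq\varepsilon}$ is compact and disjoint from $F$, and a cutoff $f$ equal to $1$ on $K$ and $0$ on $F$ gives $fa\in C^*_{r}(\cG(X\setminus F))$ with $\norm{a-fa}\leq\varepsilon$.
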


\begin{proof} (i) $\Rightarrow$ (ii) is obvious and   (ii) $\Rightarrow$ (iii) is a particular case of the previous observation. Assume that (iii) holds true and, given an invariant closed subset $F$ of $X$,  let us show that the following sequence  is exact:
$$
0 \rightarrow C^*_{r}(\cG(X\setminus F)) \rightarrow C^*_{r}(\cG) \rightarrow C^*_{r}(\cG(F)) \rightarrow 0.
$$

Let $a\in C^*_{r}(\cG)$ be such that $\pi_x(a) = 0$ for every $x\in F$. Let $\varepsilon >0$ be given. Then $K = \set{x\in X, \norm{\pi_x(a)} \geq \varepsilon}$ is a compact subset of $X$ with $K\cap F = \emptyset$. Take $f\in \cC_0(X), f: X\to [0,1]$ with $f(x) = 1$ for $x\in K$ and $f(x) = 0$ for $x\in F$. We have $\norm{a-fa}\leq \varepsilon$  and $fa\in C^*_{r}(\cG(X\setminus F))$. Therefore $a\in C^*_{r}(\cG(X\setminus F))$.
\end{proof}

\subsection{The case of HLS groupoids} \label{sec:HLS}
The following class of \'etale group bundle groupoids  (that we call HLS-groupoids)\index{HLS-groupoid} was introduced by  Higson, Lafforgue and Skandalis \cite{HLS},
 in order to provide examples of groupoids for which the Baum-Connes conjecture fails. We consider an infinite discrete group $\Gamma$ and a decreasing sequence $(N_k)_{k\in \N}$ of normal subgroups of $\Gamma$ of finite index. We set $\Gamma_\infty = \Gamma$, and $\Gamma_k = \Gamma/N_k$ and we denote by $q_k : \Gamma \to \Gamma_k$ the quotient homomorphism for $k$ in the  Alexandroff compactification $\N^+$ of $\N$. Let  $\cG$ be the quotient of $  \N^+ \times\Gamma$ with respect to the equivalence relation
$$(k,t)\sim (l,u) \,\,\,\hbox{if} \,\,\, k=l \,\,\,\hbox{and}\,\,\, q_k(t) = q_k(u).$$ 
Then $\cG$ is the bundle of groups $k\mapsto \Gamma_k$ over $\N^+$. The range and source maps are given by $r([k,t]) = s([k,t]) = k$, where $[k,t] = (k,q_k(t))$ is the equivalence class of $(k,t)$.  We endow $\cG$ with the quotient topology.  Then $\cG$ is Hausdorff (and obviously an \'etale groupoid) if and only if  for every $s\not= 1$ there exists $k_0$ such that $s\notin N_k$ for $k\geq k_0$ (hence, $\Gamma$ is residually finite).  We keep this assumption. Such examples are provided by taking $\Gamma = \hbox{SL}_n(\Z)$ and $\Gamma_k = \hbox{SL}_n(\Z/k\Z)$, for $k\geq 2$.

For these HLS groupoids, the exactness of $C^*_{r}(\cG)$ is a very strong condition which suffices to imply the amenability of $\Gamma$ as shown by Willett in \cite{Wil15}.

\begin{prop}\label{prop:HLS} Let us keep the above notation. We assume that $\Gamma$ is finitely generated. Then the following conditions are equivalent:
\begin{itemize}
\item[(1)] $\Gamma$ is amenable; $(2)$ $\cG$ is amenable;
\item[(3)]  $\cG$ is KW-exact; $(4)$ $\cG$ is  inner exact;
 \item[(5)] the sequence 
 $0 \longrightarrow C^*_{r}(\cG(\N))\longrightarrow C^*_{r}(\cG) \longrightarrow C^*_{r}(\cG(\infty)) \longrightarrow 0$
 is exact $($$($5'$)$ $C^*_{r}(\cG)$ is a continuous field of  $C^*$-algebras with fibres $C^*_{r}(\cG(x))$$)$;
\item[(6)] $C^*_{r}(\cG)$ is nuclear; $(7)$ $C^*_{r}(\cG)$ is exact.
\end{itemize}
\end{prop}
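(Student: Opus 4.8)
The plan is to establish the cycle of implications using, as the nontrivial inputs, the results already available earlier in the paper: Proposition \ref{prop:innexact} (inner exactness for group bundles is equivalent to continuity of the field $x \mapsto C^*_{r}(\cG(x))$), the general implications KW-exact $\Ra$ inner exact and amenable $\Ra$ KW-exact, and the fact that a nuclear $C^*$-algebra is exact. Since $\cG$ is a group bundle groupoid, the only nontrivial closed invariant subsets of $X = \N^+$ are the ones containing $\infty$, so the single sequence in (5) (together with its finite-level cousins, which are automatically split since $\cG(\N)$ is an algebraic direct sum over the isolated points) already captures inner exactness; this gives (4) $\Leftrightarrow$ (5) $\Leftrightarrow$ (5') directly from Proposition \ref{prop:innexact}. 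The easy part of the diagram is then (1) $\Ra$ (2) $\Ra$ (6), (2) $\Ra$ (3) $\Ra$ (4), (6) $\Ra$ (7) $\Ra$ (4): amenability of $\Gamma$ forces each fibre group to be amenable and a routine argument gives amenability of the bundle $\cG$ (the $m_i^x$ can be taken fibrewise and continuity on $\N^+$ is automatic), amenability of an \'etale groupoid gives nuclearity of $C^*_{r}(\cG)$, nuclear implies exact, and exact (of the algebra or the groupoid) implies inner exact.

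The substance is the reverse implication: (4) $\Ra$ (1), i.e. inner exactness forces $\Gamma$ amenable. I would follow Willett \cite{Wil15}. First, identify the fibre $C^*$-algebras explicitly: $C^*_{r}(\cG(k)) = C^*(\Gamma_k)$ for $k$ finite (finite groups, so full equals reduced), and $C^*_{r}(\cG(\infty)) = C^*_{r}(\Gamma)$. Since $\Gamma$ is residually finite with the $N_k$ witnessing this, the map $C^*(\Gamma) \to \prod_k C^*(\Gamma_k)$ is injective, and one checks $C^*_{r}(\cG)$ can be described concretely as the $C^*$-subalgebra of $\prod_{k \in \N^+} C^*_{r}(\cG(k))$ consisting of sequences $(a_k)$ with $a_k$ "converging" to $a_\infty$ in the appropriate sense coming from the quotient topology on $\cG$; equivalently $C^*_r(\cG)$ is generated by $\cC_c(\cG)$, whose elements are sequences of group-algebra elements eventually pushed down from a fixed $\Gamma$-element. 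The key point is that the trivial character $\Gamma_k \to \C$ for each finite $k$ assembles to a $*$-homomorphism $C^*_{r}(\cG) \to c_0(\N) + \C\un \subset \cC(\N^+)$, while the trivial representation of $\Gamma_\infty = \Gamma$ factors through $C^*(\Gamma)$ but through $C^*_{r}(\Gamma)$ only if $\Gamma$ is amenable. Concretely: if $\Gamma$ is \emph{not} amenable, then in $C^*_{r}(\cG(\infty)) = C^*_r(\Gamma)$ the element $\pi_\infty$ of the sequence $(\frac{1}{|S|}\sum_{s \in S}[k,s])_{k}$ (for $S$ a finite symmetric generating set) has norm strictly less than $1$, whereas its image in each finite fibre has norm exactly $1$; hence the function $k \mapsto \norm{\pi_k(a)}$ jumps at $\infty$, so the field is not continuous and, by Proposition \ref{prop:innexact}, $\cG$ is not inner exact. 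This is the contrapositive of (4) $\Ra$ (1).

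The main obstacle is making precise the claim that for the averaging element $a = (\tfrac{1}{|S|}\sum_{s\in S}[k,s])_k$ one has $\norm{\pi_k(a)} = 1$ for all finite $k$ but $\norm{\pi_\infty(a)} = \norm{\tfrac{1}{|S|}\sum_{s \in S}\lambda(s)}_{C^*_r(\Gamma)} < 1$ when $\Gamma$ is non-amenable. The finite-level equality is clear since the trivial representation of the finite group $\Gamma_k$ is weakly contained in (indeed a subrepresentation of) the regular representation, forcing norm $1$. The strict inequality at $\infty$ is exactly the Kesten-type characterization of amenability: $\norm{\tfrac{1}{|S|}\sum_{s\in S}\lambda(s)} = 1$ iff $\Gamma$ is amenable (Kesten's theorem, valid because $S$ generates $\Gamma$); I would cite this rather than reprove it. Granting that, $x \mapsto \norm{\pi_x(a)}$ fails to be upper semi-continuous at $\infty$ (it equals $1$ on the isolated points accumulating at $\infty$ but is $<1$ at $\infty$), so by Lemma \ref{lem:usc} $\ker \pi_\infty \neq \cC_\infty(\N^+) C^*_r(\cG)$, which by Proposition \ref{prop:innexact} means $\cG$ is not inner exact. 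The finite generation hypothesis on $\Gamma$ enters precisely here, to have a finite generating set $S$ available for Kesten's criterion; it also guarantees $\cG$ is second countable so that the characterizations invoked apply cleanly. This closes the loop and proves all seven (eight) conditions equivalent.
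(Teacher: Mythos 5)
Most of your argument is sound and, where it is sound, it coincides with the paper's proof: the reduction of inner exactness to the single sequence over $F=\{\infty\}$ via Proposition \ref{prop:innexact}, and the contrapositive of (5$'$)$\,\Ra\,$(1) using a finitely supported symmetric generating measure whose image has norm $1$ in each finite quotient (the trivial representation sits inside the regular representation of a finite group) but norm $<1$ in $C^*_{r}(\Gamma)$ by Kesten, are exactly the arguments given in the paper, including the choice of a level $n_0$ beyond which the quotient maps are injective on the support. The chains (1)$\,\Ra\,$(2)$\,\Ra\,$(3)$\,\Ra\,$(4)$\,\Ra\,$(5)$\,\Leftrightarrow\,$(5$'$) and (2)$\,\Ra\,$(6)$\,\Ra\,$(7) are also fine.

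The gap is the step (7)$\,\Ra\,$(4), which you dispose of with the phrase ``exact (of the algebra or the groupoid) implies inner exact.'' For the groupoid, i.e.\ KW-exactness, the implication is true, but that only re-proves (3)$\,\Ra\,$(4). For the algebra it is not a valid implication: exactness of $C^*_{r}(\cG)$ is a statement about minimal tensor products, whereas failure of inner exactness is the statement that the ideal $C^*_{r}(\cG(\N))$ is \emph{strictly smaller} than $\ker\pi_\infty$, and nothing formal about tensor products forces these two ideals to coincide. (If ``$C^*_{r}(\cG)$ exact $\Ra$ $\cG$ inner exact'' were a general or easy fact, several of the open questions recorded in Section 7 would collapse.) Consequently your cycle never returns from (6) and (7) to the other conditions: you have not shown that exactness, let alone nuclearity, of $C^*_{r}(\cG)$ forces $\Gamma$ to be amenable. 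The paper closes the loop by quoting Willett's Lemma 3.2 of \cite{Wil15} for (7)$\,\Ra\,$(1); this is a genuinely nontrivial result about HLS groupoids with a substantive proof of its own, and it cannot be replaced by a formal one-liner. You need either to cite that lemma or to supply an argument of comparable weight for (7)$\,\Ra\,$(1) (or (7)$\,\Ra\,$(4)).
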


\begin{proof} The equivalence between (1) and  (2)  follows for instance from \cite[Lemma 2.4]{Wil15}. That (2) $\Ra$ (3) $\Ra$ (4) $\Ra$ (5) is obvious and by Proposition \ref{prop:innexact} we have (5) $\Ra$ (5'). Let us prove that (5') $\Ra$ (1). Assume by contradiction that $\Gamma$ is not amenable. We fix a symmetric probability measure $\mu$ on $\Gamma$ with a finite support that generates $\Gamma$ and we choose $n_0$ such that the restriction of $q_n$ to the support of $\mu$ is injective for $n\geq n_0$. We take $a\in\cC_c(\cG)\subset C^*_{r}(\cG)$ such that $a(\gamma) = 0$ except for $\gamma = (n,q_n(s))$ with $n\geq n_0$ and $s\in \supp(\mu)$ where $a(\gamma) = \mu(s)$. Then $\pi_n(a) = 0$ if $n<n_0$ and $\pi_n(a) = \lambda_{\Gamma_n}(\mu) \in C_{r}^*(\Gamma_n)= C^*_{r}(\cG(n))$ if $n\geq n_0$, where $\lambda_{\Gamma_n}$ is the quasi-regular representation of $\Gamma$ in $\ell^2(\Gamma_n)$. By Kesten's result \cite{Kes1, Kes} on  spectral radii relative to symmetric random walks, we have $\norm{\lambda_{\Gamma_n}(\mu)}_{C_{r}^*(\Gamma_n)} =1$ for $\N \ni n\geq n_0$ and $\norm{\lambda_{\Gamma_\infty}(\mu)}_{C_{r}^*(\Gamma_\infty)} < 1$ since $\Gamma$ is not amenable. It follows that $C^*_{r}(\cG)$ is not a continuous field of $C^*$-algebras with fibres $C^*_{r}(\cG(n))$ on $\N^+$, a contradiction.

We know that (2) $\Ra$  (6) $\Ra$ (7). For the fact that (7) $\Ra$ (1) see \cite[Lemma 3.2]{Wil15}.
\end{proof}

Given a group bundle groupoid it may happen that $\cG(x)$ is a $C^*$-exact group for every $x\in \cG^{(0)}$ whereas $\cG$ is not $C^*$-exact. Indeed if $\cG$ is an HLS groupoid associated with a group $\Gamma$ that has Kazdhan's property (T), then the sequence
$$0 \longrightarrow C^*_{r}(\cG(\N))\longrightarrow C^*_{r}(\cG) \longrightarrow C^*_{r}(\cG(\infty)) \longrightarrow 0$$
is not exact (it is not even exact in $K$-theory!)  \cite{HLS}.  As an example we can take the exact group $\Gamma = SL(3,\Z)$. The previous proposition shows that $C^*_{r}(\cG)$ is not exact. Willett has given an even more surprising example with $\Gamma = \F_2$, the free group with two generators (see below).

\section{Weak containment, exactness and amenability}
\begin{defn}\label{def:weakamen} Let $\cG$ be a locally compact groupoid. We say that $\cG$ has the {\it weak containment property}, \index{weak containment property}(WCP) \index{(WCP)} in short, if the canonical surjection from its full $C^*$-algebra $C^*(\cG)$ onto its reduced $C^*$-algebra $C^*_{r}(\cG)$ is injective, {\it i.e.}, the two completions $C^*(\cG)$ and $C^*_{r}(\cG)$  of $\cC_c(\cG)$ are the same.
\end{defn}

A very useful theorem of Hulanicki \cite{Hul, Hul66} asserts that  a locally compact group $G$ has the (WCP) if and only if it is amenable. While it has long been known that every amenable locally compact groupoid has the (WCP) \cite{Ren91, AD-R}, whether the converse holds was a long-standing open problem (see \cite[Remark 6.1.9]{AD-R}).   Remarkably, in 2015 Willett \cite{Wil15} published a very nice example showing that a group bundle groupoid may have the (WCP) without being  amenable. His example is a HLS groupoid where $(\Gamma_n)$ is a well chosen sequence of subgroups of the free group with two generators $\F_2$. Therefore the groupoid version of Hulanicki's theorem is not true in general. However there are many positive results,  all of which involve an additional exactness assumption.

 A first result in this direction is due to Buneci \cite{Bun}. She proved that a second countable locally compact transitive groupoid  $\cG$ having the (WCP)  is measurewise amenable.  The (topological) amenability of $\cG$ can also be proved by observing that it is preserved under equivalence of groupoids \cite[Theorem 2.2.17]{AD-R}, as well as the (WCP) \cite[Theorem 17]{SW12}, and using the fact that $\cG$   is equivalent to any of its isotropy group by transitivity \cite{MRW}.
 
 It is only in 2014 that a second result appeared, linking amenability and the (WCP)
 \begin{thm}\label{Mat}\cite{Mat} Let $\Gamma$ be a discrete group acting by homeomorphisms on a compact space. Then the semidirect product groupoid is amenable if and only if it has the (WCP) and $\Gamma$ is exact.
 \end{thm}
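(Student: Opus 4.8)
\emph{Easy direction ($\cG$ amenable $\Ra$ (WCP) and $\Gamma$ exact).} That an amenable locally compact groupoid has the (WCP) was recalled just above, and applies to $\cG=X\rtimes\Gamma$. For exactness of $\Gamma$: by the remark following Theorem~\ref{thm:caractAmen}, amenability of the groupoid $X\rtimes\Gamma$ means exactly that the action $\Gamma\actson X$ is topologically amenable; since $X$ is a nonempty compact space this is an amenable $\Gamma$-action on a compact space, so Theorem~\ref{equivexact:group}, $(1)\Ra(3)$, gives that $\Gamma$ is $C^*$-exact, i.e.\ exact.

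\emph{Hard direction: strategy.} Assume $\Gamma$ exact and $\cG=X\rtimes\Gamma$ has the (WCP); the goal is to produce, by Theorem~\ref{thm:caractAmen}(iii), a sequence $(h_n)$ of continuous positive-definite functions of \emph{compact support} on $\cG$, bounded by $1$ on $X$, with $h_n\to1$ uniformly on compact sets (one may assume $\Gamma$ countable and $X$ metrizable, so that $\cG$ is $\sigma$-compact, reducing the general case by the usual separability arguments). Since $\Gamma$ is $C^*$-exact, Example~\ref{rem:invGE}(b) already gives that $\cG$ is \emph{strongly amenable at infinity}; equivalently (Proposition~\ref{prop:SC}, Theorem~\ref{prop:amen_inf}) the $\cG$-space $\beta_r\cG$ is amenable and $\cG$ carries a sequence of continuous positive-definite \emph{kernels} $k_n$ on $\cG*_r\cG$, supported in tubes, with $k_n\to1$ uniformly on tubes. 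The entire difficulty is to upgrade these ``tube-supported'' kernels to genuine compactly supported positive-definite \emph{functions} on $\cG$ — that is, to pass from strong amenability at infinity to amenability — and the (WCP) is precisely the extra input that makes this possible.

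\emph{Hard direction: using (WCP).} The mechanism is a Hulanicki-type argument. Amenability of the $\cG$-space $\beta_r\cG$ means that the $\sigma$-compact groupoid $\cH:=\beta_r\cG\rtimes\cG$ is amenable, so by Theorem~\ref{thm:caractAmen}(iii) (valid in the $\sigma$-compact case, Remark~\ref{rem:Godement}(a)) it carries continuous positive-definite functions $H_n$ of compact support converging to $1$. One then pushes the $H_n$ forward along the surjective homomorphism $\cH\to\cG$, $(\omega,\gamma)\mapsto\gamma$, by integrating out the $\beta_r\cG$-variable against a measure living over the compact base $X=\cG^{(0)}$; the resulting functions on $\cG$ have compact support and converge to $1$ uniformly on compacta, but a priori they are only positive-definite \emph{relative to the regular representation} $\Lambda$. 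Here the equality $C^*(\cG)=C^*_{r}(\cG)$ enters: it is exactly what is needed to guarantee that a normalized continuous function on $\cG$ which is positive-definite with respect to $\Lambda$ is in fact positive-definite on $\cG$, so the averaged $H_n$ are honest positive-definite functions and Theorem~\ref{thm:caractAmen}(iii) yields the amenability of $\cG$. (Equivalently, phrased via $C^*$-algebras: $\Gamma$ exact makes $C(X\times\beta\Gamma)\rtimes_r\Gamma=C(X\times\beta\Gamma)\rtimes\Gamma$ nuclear for the diagonal action, the (WCP) identifies $C(X)\rtimes\Gamma$ with $C(X)\rtimes_r\Gamma$, and one descends the approximation property from $\Gamma\actson C(X)\otimes\ell^\infty(\Gamma)$ to $\Gamma\actson C(X)$.)

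\emph{Main obstacle.} All ingredients but one are standard or already in the excerpt: exactness $\Ra$ strong amenability at infinity of $X\rtimes\Gamma$ (Example~\ref{rem:invGE}(b)), the Godement/positive-type characterizations (Theorems~\ref{thm:caractAmen}, \ref{prop:amen_inf}), the dictionary between amenability of $X\rtimes\Gamma$ and amenability of the action, and the reduction to $\Gamma$ countable, $X$ metrizable. The genuinely hard point is the middle step: making quantitatively precise how the coincidence $C^*(\cG)=C^*_{r}(\cG)$ lets one ``collapse'' the auxiliary fibre $\beta_r\cG$ (equivalently $\beta\Gamma$) and thereby convert \emph{strong amenability at infinity together with (WCP)} into \emph{amenability} — i.e.\ the descent of the approximation property under the (WCP). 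This is what distinguishes the situation from Willett's example, where the (WCP) holds but amenability at infinity fails, and it is where the real work lies.
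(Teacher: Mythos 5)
Your easy direction is fine, and your reduction of the hard direction is the right one: exactness of $\Gamma$ gives strong amenability at infinity of $\cG=X\rtimes\Gamma$, i.e.\ amenability of $\beta_r\cG\rtimes\cG$, and the whole content of the theorem is the descent from this auxiliary groupoid back to $\cG$ using the (WCP). But that descent is exactly the step you do not carry out, and the mechanism you sketch for it does not work as stated. To ``integrate out the $\beta_r\cG$-variable'' you need a family of measures on the fibres of $\beta_r\cG\to X$ (for $\cG=X\rtimes\Gamma$, essentially a field of means on $\beta\Gamma$), and for the pushforward of a positive definite function on $\beta_r\cG\rtimes\cG$ to be positive definite in any sense --- even ``relative to $\Lambda$'' --- this family must be equivariant; an invariant mean on $\beta\Gamma$ is precisely amenability of $\Gamma$, so the argument is circular, and with a non-invariant family the averaged functions have no positivity at all. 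The subsequent claim that the (WCP) upgrades $\Lambda$-positive-definiteness to genuine positive-definiteness is asserted, not proved, and it cannot carry the weight alone: Willett's example \cite{Wil15} has the (WCP) and admits no compactly supported positive definite functions tending to $1$, so exactness must enter the averaging step quantitatively, in a way your sketch never specifies. Your closing paragraph concedes that this ``is where the real work lies,'' which is an accurate self-assessment: the proposal reduces the theorem to its hard step and stops there.

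For comparison, the proof of \cite{Mat} (and of its generalization, Theorem \ref{Kranz}) does not work at the level of positive definite functions at all. It is operator-algebraic: one shows that the canonical inclusion $C^*_{r}(\cG)\hookrightarrow C^*_{r}(\cG)^{**}$ is nuclear by factoring it through the nuclear $C^*$-algebra $C^*_{r}(\beta_r\cG\rtimes\cG)$. The delicate point is the construction of a completely positive map $\phi\colon C^*_{r}(\beta_r\cG\rtimes\cG)\to C^*_{r}(\cG)^{**}$ whose restriction to $C^*_{r}(\cG)$ is the canonical inclusion; the (WCP) enters here through the identification $C^*(\cG)=C^*_{r}(\cG)$, which makes the universal property of the full crossed product available on the reduced algebra. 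One then concludes that $C^*_{r}(\cG)$ is nuclear by \cite[Proposition 2.3.8]{BO}, and finally that the action is amenable by \cite[Theorem 4.5]{AD87} (or \cite[Theorem 5.6.18]{BO}) --- a nontrivial theorem that your sketch also implicitly needs but never invokes. If you want to salvage a function-theoretic argument, you would have to reprove the content of that factorization by hand, which is not a routine Hulanicki-type computation.
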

 
 Note that the exactness of $\Gamma$ is equivalent to the strong amenability at infinity  of the groupoid $X\rtimes \Gamma$ since $X$ is compact  and $\Gamma$ is discrete (see \cite[Proposition 4.3 (i)]{AD16}). Recently, the above theorem has been extended by Kranz as follows.
 
\begin{thm}\label{Kranz} \cite{Kra} Let $\cG$ be an \'etale  groupoid. Then $\cG$ is amenable if and only if it has the (WCP) and is strongly amenable at infinity.
\end{thm}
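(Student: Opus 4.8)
The ``only if'' direction is immediate from facts recalled above: an amenable locally compact groupoid has the (WCP) \cite{Ren91, AD-R}, and it is strongly amenable at infinity because $\cG$ acts amenably on its own unit space, with the identity as a continuous section (Example \ref{rem:invGE}(a)). So the content is the converse, for which I would proceed as follows (I assume $\cG$ second countable).

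Assume $\cG$ has the (WCP) and is strongly amenable at infinity, and set $X=\cG^{(0)}$. By Proposition \ref{prop:SC} the action $\cG\actson\beta_r\cG$ is amenable, i.e.\ the ($\sigma$-compact, \'etale) groupoid $\cH=\beta_r\cG\rtimes\cG$ is amenable; so by the $\sigma$-compact version of Theorem \ref{thm:caractAmen}(iii) (Remark \ref{rem:Godement}(a)) there are continuous, compactly supported, positive-definite functions $H_n$ on $\cH$ with $H_n\leq1$ on the unit space $\beta_r\cG$ and $H_n\to1$ uniformly on compacta. The key reduction is that it suffices to produce a \emph{$\cG$-invariant} continuous field $(\nu_x)_{x\in X}$ of probability measures, $\nu_x$ carried by the compact fibre $(\beta_r\cG)^x=r_\beta^{-1}(x)$ (equivalently, a $\cG$-equivariant $\cC_0(X)$-linear conditional expectation $\cC_0(\beta_r\cG)\to\cC_0(X)$ onto the image of $\cC_0(X)$ under $f\mapsto f\circ r_\beta$). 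Indeed, put $h_n(\gamma)=\int_{(\beta_r\cG)^{r(\gamma)}}H_n(\zeta,\gamma)\,\rd\nu_{r(\gamma)}(\zeta)$ for $\gamma\in\cG$; then $h_n\in\cC_c(\cG)$, $h_n\leq1$ on $X$ (there $(\zeta,x)$ is a unit of $\cH$), and $h_n\to1$ uniformly on each compact $K\subset\cG$ since $\{(\zeta,\gamma)\in\cH:\gamma\in K\}$ is relatively compact in $\cH$ (its $\beta_r\cG$-coordinate lies in the compact set $r_\beta^{-1}(r(K))$). Most importantly, the invariance $\gamma_*\nu_{s(\gamma)}=\nu_{r(\gamma)}$ makes $h_n$ positive definite: for $x\in X$ and $\gamma_1,\dots,\gamma_k\in\cG^x$, the change of variables $\zeta=\gamma_i^{-1}\zeta'$ rewrites $\big[h_n(\gamma_i^{-1}\gamma_j)\big]_{i,j}$ as $\int_{(\beta_r\cG)^x}\big[H_n(\gamma_i^{-1}\zeta',\gamma_i^{-1}\gamma_j)\big]_{i,j}\,\rd\nu_x(\zeta')$, which is positive by positive-definiteness of $H_n$ on $\cH$. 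Theorem \ref{thm:caractAmen}(iii) then yields the amenability of $\cG$.

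It remains to construct the invariant field, and this is the step that uses the (WCP). Since $\cG$ is strongly amenable at infinity it is $C^*$-exact ((1) $\Ra$ (6) of Theorem \ref{thm:equiv}), so by Kirchberg's lemma (Lemma \ref{lem:Kirch}) its regular representation $\Lambda$ on $\cE=\ell^2_{\cC_0(X)}(\cG)$ is nuclear, and Theorem \ref{cor:approx} provides compactly supported factorable completely positive approximations of $\Lambda$. The role of the (WCP) is to identify $C^*(\cG)=C^*_r(\cG)$: this lets one bring the ``trivial-type'' representation of $\cC_c(\cG)$ into play and feed the above approximations into the resulting positive-type data, exactly as inner amenability is used in the proof of (6) $\Ra$ (1) of Theorem \ref{thm:equiv}, but now aiming at positive-definite \emph{functions} on $\cG$ tending to $1$ uniformly on compacta (equivalently, at the invariant field) rather than at positive-definite kernels on $\cG*_r\cG$. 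Working inside $\cC_0(\beta_r\cG)$, which carries the amenable $\cG$-action and whose reduced crossed product $\cC_0(\beta_r\cG)\rtimes_r\cG$ is nuclear ((1) $\Ra$ (3) of Theorem \ref{thm:equiv}), provides the natural place for this averaging.

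The main obstacle is precisely this last step — the precise interplay of the (WCP) with amenability at infinity. It cannot be by-passed: a non-amenable $C^*$-exact discrete group, viewed as a groupoid, is strongly amenable at infinity but not amenable, whereas Willett's HLS groupoid over $\F_2$ has the (WCP) but is not amenable. So the construction must genuinely combine the approximate invariance furnished by amenability at infinity with the rigidity forced by the (WCP), in the spirit of Matsnev's proof of Theorem \ref{Mat} and of the group-level results it generalizes.
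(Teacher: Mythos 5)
Your treatment of the ``only if'' direction is fine, and the formal reduction in your second paragraph is correct as far as it goes: \emph{if} a continuous $\cG$-invariant field $(\nu_x)_{x\in X}$ of probability measures on the fibres of $\beta_r\cG$ existed, then averaging the positive-type functions $H_n$ on $\beta_r\cG\rtimes\cG$ against it would indeed produce the positive-type functions on $\cG$ required by Theorem \ref{thm:caractAmen}(iii) (your positivity computation via the substitution $\zeta=\gamma_i^{-1}\zeta'$ is correct). But this displaces the entire content of the theorem into the construction of that field, and that is exactly the step you do not carry out --- you acknowledge as much in your final paragraph. The one place where the (WCP) must do actual work is left as an admitted obstacle, with only a heuristic (``bring the trivial-type representation into play'') in its place. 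As it stands this is not a proof.

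Beyond being incomplete, the intermediate object you aim for is stronger than what the argument needs, and it is doubtful it can be produced: the invariant mean that the (WCP) ultimately furnishes is intrinsically a weak* object, and there is no reason for it to be realized by a \emph{continuous} field $x\mapsto\nu_x$ on the (non-metrizable) fibres of $\beta_r\cG$ --- even for an amenable $\cG$ a pointwise weak* cluster point of the approximately invariant measures $m_i^x$ loses continuity in $x$. Kranz's proof, which the paper outlines and which follows Matsumura (not ``Matsnev''), avoids this by never demanding continuity: one shows that the canonical inclusion $C^*_r(\cG)\hookrightarrow C^*_r(\cG)^{**}$ is nuclear by constructing a completely positive map $\phi:C^*_r(\beta_r\cG\rtimes\cG)\to C^*_r(\cG)^{**}$ whose restriction to $C^*_r(\cG)$ is that inclusion; since $C^*_r(\beta_r\cG\rtimes\cG)$ is nuclear (\cite[Proposition 7.2]{AD16}), the inclusion is nuclear, so $C^*_r(\cG)$ is nuclear by \cite[Proposition 2.3.8]{BO} and $\cG$ is amenable by \cite[Theorem 5.6.18]{BO}. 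The (WCP), in the form $C^*(\cG)=C^*_r(\cG)$, is what allows one to build $\phi$ by first working with the full $C^*$-algebra (where universal properties are available) and then descending to the reduced level; the delicate part, which uses the \'etale hypothesis crucially, is precisely this descent. In short: the bidual $C^*_r(\cG)^{**}$ is the correct home for the invariant ``measure'' you are looking for, and aiming instead at a continuous invariant field on $\beta_r\cG$ is the wrong target.
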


Assuming that $\cG$ has the (WCP) and is strongly amenable at infinity, 
Kranz's strategy to prove that $\cG$ is amenable is the same as that of  \cite{Mat}, but with additional technical difficulties. It consists in showing that the canonical inclusion of $C^*_{r}(\cG)$ into its bidual $C^*_{r}(\cG)^{**}$ is nuclear. Then by \cite[Proposition 2.3.8]{BO} one sees that $C^*_{r}(\cG)$ is  nuclear  and by \cite[Theorem 5.6.18]{BO} it follows that the groupoid  $\cG$ is amenable. The delicate step, which requires in a crucial way that $\cG$ is \'etale,  is to show the existence of a completely positive map $\phi: C^*_{r}(\beta_r\cG \rtimes \cG) \to C^*_{r}(\cG)^{**}$ whose restriction to $C^*_{r}(\cG)$ is the inclusion from $C^*_{r}(\cG)$ in its bidual. Since $C^*_{r}(\beta_r\cG \rtimes \cG)$ is nuclear (see \cite[Proposition 7.2]{AD16}), this inclusion is nuclear.

 By a different method  the following extension of Theorem \ref{Mat} was obtained in \cite{BEW20}. Note that unlike the case where $X$ is compact it is not true in general that  $G$ is KW-exact when $X\rtimes G$ is amenable.
 \vspace{1mm}

 \begin{thm} \cite[Theorem 5.15]{BEW20} Let $G\actson X$ be a continuous action of a locally compact group $G$ on a locally compact space $X$. We assume that $G$ is KW-exact and that $X\rtimes G$ has the (WCP). Then the groupoid $X\rtimes G$ is amenable.
 \end{thm}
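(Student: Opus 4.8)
The plan is to follow the strategy already used for étale groupoids (Theorem~\ref{Kranz}) and in \cite{Mat}, adapting it to the locally compact group case where the key point is that KW-exactness of $G$ allows one to replace $\beta_r\cG$ by a suitable $G$-equivariant compactification even though $\cG = X\rtimes G$ need not be étale. First I would recall that, since $G$ is KW-exact, it is amenable at infinity: there is a continuous amenable action of $G$ on a compact space $Z$. Forming the $G$-space $X\times Z$ (with diagonal action) and the fibred product over $X$, one obtains a fibrewise compact amenable $\cG$-space; call the resulting semidirect product groupoid $\cH = (X\times Z)\rtimes G$. By the groupoid version of the results cited after Theorem~\ref{thm:equiv} (amenability at infinity $\Rightarrow$ nuclearity of the relevant crossed product, \cite[\S7]{AD16} together with \cite[Corollary 6.2.14]{AD-R}), the reduced crossed product $C(X\times Z)\rtimes_r G = C^*_r(\cH)$ is nuclear, because $\cH$ is an amenable groupoid. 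The projection $X\times Z\to X$ is proper and $G$-equivariant, giving a canonical nondegenerate inclusion $C_0(X)\hookrightarrow C(X\times Z)$ and hence a $G$-equivariant conditional-expectation-type map at the level of crossed products.

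The heart of the argument is then to produce a completely positive map $\phi: C^*_r(\cH)\to C^*_r(\cG)^{**}$ whose restriction to $C^*_r(\cG)$ is the canonical inclusion $\iota: C^*_r(\cG)\to C^*_r(\cG)^{**}$ into the bidual. Granting this, nuclearity of $C^*_r(\cH)$ forces $\iota$ to be a nuclear map (a nuclear map composed appropriately stays nuclear), and then by \cite[Proposition 2.3.8]{BO} the $C^*$-algebra $C^*_r(\cG)=C_0(X)\rtimes_r G$ is nuclear; by the groupoid analogue of \cite[Theorem 5.6.18]{BO} (nuclearity of the reduced crossed product $\Rightarrow$ amenability of the groupoid, valid here because $X\rtimes G$ is a semidirect product groupoid, where the $\sigma$-compactness subtleties of \cite[Proposition 2.5]{AD02} do not arise) we conclude that $X\rtimes G$ is amenable. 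To construct $\phi$, I would use the (WCP): the hypothesis that $C^*(\cG)=C^*_r(\cG)$ means that the full crossed product $C_0(X)\rtimes G$ coincides with the reduced one. One has a $G$-equivariant conditional expectation $E: C(X\times Z)\to C_0(X)$ coming from a $G$-invariant state on $C(Z)$ — wait, $G$ need not be amenable, so no invariant state exists; instead one uses the amenability of the $G$-action on $Z$ to get an \emph{equivariant unital completely positive map} $C(X\times Z)\to L^\infty$ of the relevant von Neumann algebra, or more precisely a u.c.p. $G$-map $C(X\times Z)\to C_0(X)^{**}$. Functoriality of full crossed products applied to this u.c.p. map yields a u.c.p. map $C(X\times Z)\rtimes G \to C_0(X)^{**}\rtimes G$; composing with the canonical map of the latter into $(C_0(X)\rtimes G)^{**}=C^*_r(\cG)^{**}$ (using the WCP so that full $=$ reduced on $C_0(X)\rtimes G$, and noting $C^*_r(\cH)$ is a quotient of the full crossed product $C(X\times Z)\rtimes G$) gives the desired $\phi$, with the compatibility over $C^*_r(\cG)$ following from the fact that $E$ restricts to the identity on the copy of $C_0(X)$.

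The main obstacle I expect is precisely the construction and well-definedness of $\phi$ on the \emph{reduced} crossed product $C^*_r(\cH)$: the naturally available u.c.p. map lives on the full crossed product of $C(X\times Z)$, and one must check it descends to $C^*_r(\cH) = C(X\times Z)\rtimes_r G$. This is where amenability of the action $G\actson X\times Z$ is used again — since that action is amenable, full and reduced crossed products of $C(X\times Z)$ coincide, so there is no ambiguity and the map automatically factors through the reduced completion. A secondary technical point is justifying the equivariant u.c.p. lift $C(X\times Z)\to C_0(X)^{**}$ using boundary amenability of the $G$-action on $Z$ (this replaces the invariant conditional expectation available when $G$ is amenable); one builds it from an approximately invariant net of positive-type functions on $G$ valued in probability measures on $Z$, exactly as in the $C(X)$-case of the approximation property discussed in the Remark following Theorem~\ref{thm:caractAmen}. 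Finally one must verify that the resulting $\phi$ is genuinely completely positive and contractive — routine once the construction is in place — and that $\iota = \phi\circ(\text{canonical map }C^*_r(\cG)\to C^*_r(\cH))$, so that nuclearity of $C^*_r(\cH)$ transfers to $\iota$, completing the proof as sketched for Theorem~\ref{Kranz}.
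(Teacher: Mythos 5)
First, a point of orientation: the paper does not prove this theorem. It quotes it from \cite{BEW20} and explicitly notes that it was obtained there ``by a different method'' than the Matsumura--Kranz strategy you are adapting. Your first step is fine and is where any proof must start: KW-exactness gives a compact amenable $G$-space $Z$, the diagonal action on $X\times Z$ is amenable with $\mathrm{pr}_X$ proper, and $C_0(X\times Z)\rtimes_r G=C_0(X\times Z)\rtimes G$ is nuclear. The problems are in the two steps that follow.

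The first gap is the claimed $G$-equivariant u.c.p.\ map $C_0(X\times Z)\to C_0(X)^{**}$ restricting to the canonical inclusion on $C_0(X)$. This cannot be extracted from amenability of $G\actson Z$, which is the only ingredient you invoke for it: specialising to $X$ a point, such a map is a $G$-invariant state on $C(Z)$, i.e.\ an invariant probability measure on a compact amenable $G$-space, whose existence forces $G$ to be amenable. Amenability of the action gives \emph{approximately} invariant data; the weak-$*$ limit trick that produces an exactly invariant mean for amenable groups destroys equivariance here. In Matsumura's and Kranz's arguments the completely positive map $\phi$ is \emph{not} of the form ``(equivariant coefficient map)$\rtimes G$'': it is built directly on the crossed product from a covariant representation on $\ell^2(\Gamma)\otimes H$ together with Fell absorption, and this is exactly the ``delicate step'' the paper flags in its discussion of Theorem~\ref{Kranz}. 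A further obstacle for non-discrete $G$, which you do not address, is that the $G$-action on $C_0(X)^{**}$ is not point-norm continuous, so $C_0(X)^{**}\rtimes G$ is not defined as a $C^*$-crossed product; one is forced into the $G$-$W^*$-algebra framework of \cite{BEW20}.

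The second gap is the closing step. The implication ``$C^*_r(\cG)$ nuclear $\Rightarrow$ $\cG$ amenable'' (\cite[Theorem 5.6.18]{BO}) is a theorem about \emph{\'etale} groupoids; for transformation groupoids of non-discrete groups it is false. For instance $C^*_r(SL(2,\R))$ is nuclear while the groupoid $\{\mathrm{pt}\}\rtimes SL(2,\R)$ is not amenable; the paper itself recalls (after \cite{LP}) that a locally compact group with nuclear reduced $C^*$-algebra is amenable only if it is in addition inner amenable. The obstruction is not the $\sigma$-compactness subtlety you mention. So even if you had established nuclearity of $C_0(X)\rtimes_r G$, the conclusion would not follow by your route. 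This failure is precisely why \cite{BEW20} proceed differently: roughly, they show that exactness of $G$ together with the (WCP) forces a von Neumann-algebraic (``commutant'') amenability of the enveloping $G$-$W^*$-system, and then, for commutative coefficients, pass from measurewise amenability of $X\rtimes G$ to topological amenability via their Corollary 3.29, which the paper quotes elsewhere. In short, your sketch reproduces the correct opening move but both the construction of $\phi$ and the final deduction of amenability break down outside the \'etale setting.
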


 It is interesting to note that the behaviour is different for group actions on non-commutative $C^*$-algebras. For instance in \cite[Proposition 5.25]{BEW20} a surprising  example of a non-amenable action  having the (WCP) of the exact group $G= PSL(2,\C)$ on the $C^*$-algebra of compact operators  has been constructed. It would be interesting to construct an example with an exact discrete group.

 For group bundle groupoids  we have the following easy result.
 
 \begin{prop}\label{prop:WCPbundle} Let $\cG$ be a second countable locally compact  group bundle groupoid over a locally compact space $X$. Then $\cG$   is amenable  if and only if it has the (WCP) and is inner exact.
\end{prop}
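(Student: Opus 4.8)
The plan is to prove the two implications separately, using the fact that for group bundle groupoids both "inner exact" and "WCP" localize nicely over the unit space $X$. The forward direction is the easy half: if $\cG$ is amenable then it has the (WCP) by the groupoid version of Hulanicki's theorem (amenability $\Rightarrow$ (WCP), \cite{Ren91, AD-R}), and it is KW-exact (hence inner exact) because amenability at infinity implies KW-exactness and every amenable groupoid is (strongly) amenable at infinity via its action on its own unit space, as recorded in Examples \ref{rem:invGE}(a); alternatively one can cite directly that amenable groupoids are KW-exact. So the content is entirely in the converse.

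For the converse, suppose $\cG$ has the (WCP) and is inner exact; I want to show $\cG$ is amenable. The idea is to reduce to the fibres $\cG(x)$, which are locally compact groups, and then invoke the group case of Hulanicki's theorem. First, inner exactness plus Proposition \ref{prop:innexact} gives that $C^*_r(\cG)$ is a \emph{continuous} field of $C^*$-algebras over $X$ with fibres $C^*_r(\cG(x))$. Next, I would establish the analogous statement for the full $C^*$-algebra: $C^*(\cG)$ is always (for a group bundle) a continuous field over $X$ with fibres $C^*(\cG(x))$ — for the full picture upper semicontinuity comes from the universal property and lower semicontinuity holds as for the reduced case, or more simply one observes that $x \mapsto \norm{\pi_x(a)}$ is lower semicontinuous always and the sequence $0 \to C^*(\cG(U)) \to C^*(\cG) \to C^*(\cG(F)) \to 0$ is exact for \emph{every} closed invariant $F$ (stated in the excerpt), which forces continuity of the field by Lemma \ref{lem:usc}. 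Now the (WCP) says the canonical surjection $C^*(\cG) \to C^*_r(\cG)$ is an isomorphism; since this map is $\cC_0(X)$-linear it induces, for each $x$, a surjection $C^*(\cG(x)) \to C^*_r(\cG(x))$ on the fibres, and because both fields are continuous with the same underlying algebra, the fibrewise norms agree: $\norm{\pi_x(a)}_{C^*} = \norm{\pi_x(a)}_{C^*_r}$ for all $a$ and all $x$. Hence each fibre group $\cG(x)$ has the (WCP), so by Hulanicki's theorem each $\cG(x)$ is an amenable locally compact group.

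Finally I need to upgrade "every fibre is an amenable group" to "$\cG$ is an amenable groupoid." For a group bundle groupoid this is a known fact: a group bundle $\cG$ over $X$ is (topologically) amenable if and only if every fibre $\cG(x)$ is an amenable group — the point is that amenability of a group bundle can be checked fibrewise because there is no interaction between fibres, and one can use Theorem \ref{thm:caractAmen}, constructing the approximating positive-type functions fibre by fibre using amenability of each $\cG(x)$ together with a partition-of-unity / local-triviality argument to arrange continuity in $x$ and compact support. (Equivalently, one cites the characterization that for group bundles the fibrewise amenability is equivalent to amenability, analogously to \cite[Lemma 2.4]{Wil15} in the HLS case.) This completes the converse and hence the proposition.

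The main obstacle I anticipate is the last step: passing from fibrewise amenability of each $\cG(x)$ to amenability of the bundle $\cG$ requires care about \emph{continuity and compact support} of the approximating functions $h_n \in \cC_c(\cG)$ in Theorem \ref{thm:caractAmen}(iii) — one has amenability data on each fibre but must glue it continuously over $X$, which uses that $\cG$ is a locally compact group bundle (so locally it looks like a product, via \cite[Lemma 1.3]{Ren91}) plus a second-countability/partition-of-unity argument. A secondary subtlety is verifying that the (WCP) of $\cG$ descends to the fibres; this is where continuity of \emph{both} the full and reduced fields (not just the reduced one, which is the hard direction handled by inner exactness) is essential, since it is what lets us read off fibrewise norm equality from global norm equality.
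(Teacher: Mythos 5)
Your overall route coincides with the paper's: show that each isotropy group $\cG(x)$ has the (WCP), deduce its amenability from Hulanicki's theorem, and then invoke the fact that a group bundle groupoid is amenable if and only if all its fibres are. For the first step the paper simply writes the commutative diagram linking the full and reduced sequences over $U_x = X\setminus\set{x}$ and chases it: the top row is always exact, the bottom row is exact by inner exactness, the middle vertical arrow is injective by the (WCP), hence the right-hand arrow $\lambda_{\cG(x)}$ is injective. Your continuous-field formulation carries the same content (via Proposition \ref{prop:innexact}), but continuity of the two fields over all of $X$ is not what is really used; what matters is only that the kernels of the two evaluation maps at the single point $x$ are both the closure of $\cC_c(\cG(U_x))$, which is exactly the exactness of the two rows at that point. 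The diagram chase is the shorter way to say this.

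The one place your argument would fail as written is the final step. Passing from \emph{every fibre $\cG(x)$ is an amenable group} to \emph{$\cG$ is an amenable groupoid} is not a routine partition-of-unity gluing: a locally compact group bundle is in general not locally trivial (\cite[Lemma 1.3]{Ren91} only provides a continuous choice of Haar measures, not a local product structure), and the Reiter or positive-type data furnished on each fibre by amenability of $\cG(x)$ come with no uniformity or continuity in $x$, so there is nothing to glue into the functions $h_n\in\cC_c(\cG)$ required by Theorem \ref{thm:caractAmen}. This implication is precisely \cite[Theorem 3.5]{Ren13}, a genuinely non-trivial theorem (proved by passing through Borel and measurewise amenability), and it is exactly what the paper cites at this point; the analogy with \cite[Lemma 2.4]{Wil15} is misleading because there the fibres are all quotients of a single group. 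Since you also offer to cite such a characterization, your proof is salvageable, but the direct construction you sketch should be replaced by that reference.
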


\begin{proof} Assume that $\cG$ has the (WCP) and is inner exact and let $x\in X$. We set $U_x = X\setminus \set{x}$. In the commutative diagram
$$\xymatrix{
0\ar[r] &C^*(\cG(U_x)) \ar[d] \ar[r] & C^*(\cG)\ar[d]^{\lambda} \ar[r] &C^* (\cG(x))
\ar[r]\ar[d]^{\lambda_{\cG(x)}} &0\\
0\ar[r] & C^*_{r}(\cG(U_x)) \ar[r] & C^*_{r}(\cG)\ar[r]^{\pi_x}&  C^*_r (\cG(x))
\ar[r] &0}$$
both sequences are exact and $\lambda$ is injective. Chasing through the diagram we see  that $\lambda_{\cG(x)}$ is injective ({\it i.e.}, the group $\cG(x)$ is amenable).  This ends the proof since, by \cite[Theorem 3.5]{Ren13}, the  group bundle groupoid $\cG$ is amenable if and only if $\cG(x)$ is amenable for every $x\in X$.
\end{proof}

The cases of transitive groupoids and of group bundle groupoids are included in the following result of B\"onicke. His   nice elementary  proof is reproduced in \cite[Theorem 10.5]{AD16}.

\begin{thm}\cite{Bon}\label{thm:bon} Let $\cG$ be a  second countable locally compact groupoid  such that the orbit space $\cG\setminus \cG^{(0)}$ equipped with the quotient topology is $T_0$. Then the following conditions are equivalent:
\begin{itemize}
\item[(i)] $\cG$ is  amenable;
\item[(ii)] $\cG$ has the (WCP) and is inner exact.
\end{itemize}
\end{thm}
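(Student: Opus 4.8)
The plan is to prove the equivalence of (i) and (ii) in B\"onicke's theorem by reducing to the two cases already understood --- transitive (orbit) groupoids and group bundle groupoids --- via a careful decomposition of the unit space along the orbit structure, exploiting the $T_0$ hypothesis on $\cG\setminus\cG^{(0)}$. The implication (i) $\Ra$ (ii) is the easy direction: an amenable groupoid has the (WCP) by \cite{Ren91, AD-R}, and is inner exact because amenability is inherited by the restrictions $\cG(U)$ and $\cG(F)$ for $U$ open invariant and $F$ closed invariant, so the full-equals-reduced identification makes the always-exact full sequence $0\to C^*(\cG(U))\to C^*(\cG)\to C^*(\cG(F))\to 0$ into the reduced one. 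So the content is (ii) $\Ra$ (i).

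For (ii) $\Ra$ (i), I would first record the two structural consequences of the $T_0$ assumption. Since $\cG\setminus\cG^{(0)}$ is $T_0$ and (in the second countable setting) the orbit equivalence relation is then smooth enough, every orbit closure contains a \emph{locally closed} orbit; equivalently, one can find a decreasing transfinite (here countable) chain of closed invariant subsets $X = F_0 \supset F_1 \supset \cdots$ whose successive differences $F_\alpha\setminus F_{\alpha+1}$ are locally closed invariant sets on which $\cG$ acts with a \emph{single} dense orbit --- i.e. $\cG$ restricted there is (equivalent to) a transitive groupoid. The key point is that amenability of $\cG$ is equivalent to amenability of $\cG(F_\alpha\setminus F_{\alpha+1})$ for all $\alpha$ (amenability can be checked on a composition series of invariant subsets, by \cite[Proposition 2.2.9]{AD-R} and the fact that it is both inherited by restrictions and detected on dense-orbit pieces and on the transverse direction). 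Dually, inner exactness passes to the subquotients $\cG(F_\alpha\setminus F_{\alpha+1})$ --- this is where one uses that a reduced exact sequence associated to a closed invariant subset, composed appropriately, yields the analogous sequence for the locally closed piece --- and the (WCP) likewise passes to restrictions $\cG(U)$ and quotients $\cG(F)$ by functoriality of the full/reduced completions. So one is reduced to the case where $\cG$ acts on $X$ with a dense orbit, hence (by Buneci's argument, or directly since amenability and the (WCP) are equivalence invariants and such a $\cG$ is equivalent to an isotropy \emph{group bundle} over a point once we pass to a single orbit) to the transitive case and the group-bundle case handled in Proposition \ref{prop:WCPbundle}.

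Concretely: if $\cG$ has a dense orbit $\cO = r(s^{-1}(x))$, pick $x\in\cO$; the full corner $\cG(x)$ is a group, and the diagram-chase of Proposition \ref{prop:WCPbundle} --- now using that the \emph{full} sequence $0\to C^*(\cG(X\setminus\overline{\{x\}}))\to C^*(\cG)\to C^*(\cG(\overline{\{x\}}))\to 0$ is exact, that $\lambda\colon C^*(\cG)\to C^*_r(\cG)$ is injective by (WCP), and that the corresponding reduced sequence is exact by inner exactness --- forces $\lambda_{\cG(x)}$ injective, i.e. $\cG(x)$ amenable. Since every orbit is dense in its own closure and the above reduction gives this for a dense orbit in each closed invariant piece, we conclude that every isotropy group of $\cG$ is amenable; combined with amenability of the (transitive, hence trivially-at-infinity) orbit direction and the composition-series criterion, $\cG$ is amenable. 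I expect the main obstacle to be making the transfinite/composition-series argument rigorous in the topological (not merely measured) category: one must verify that \emph{topological} amenability --- not just measurewise amenability --- is detected on a countable composition series of closed invariant subsets whose successive differences carry a dense orbit, and that inner exactness genuinely descends to the locally closed subquotients (the awkward point being that $\cG(F\setminus F')$ sits inside $\cG(X\setminus F')$ as a quotient and inside $\cG(X\setminus F)$-complements as an ideal, so one needs a two-step exactness juggling rather than a single sequence). Once that bookkeeping is in place, the group-bundle/transitive base cases supply the amenability of the isotropy, and B\"onicke's elementary diagram chase finishes the argument.
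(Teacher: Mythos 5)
Your skeleton for (ii) $\Ra$ (i) --- use the $T_0$ hypothesis to see each orbit as a locally closed invariant set, run the diagram chase of Proposition \ref{prop:WCPbundle} on the resulting subquotient to transfer the (WCP) to the transitive groupoid $\cG([x])$, and then to the isotropy group $\cG(x)$ by equivalence invariance and Hulanicki --- is indeed the first half of the argument reproduced in \cite[Theorem 10.5]{AD16} (the paper itself only cites the proof). But two of your supporting claims are genuinely wrong or missing. First, the $T_0$ hypothesis does \emph{not} give a (transfinite) chain of closed invariant sets whose successive differences carry a single dense orbit: a group bundle groupoid over a nontrivial $X$ has Hausdorff (hence $T_0$) orbit space, its Glimm stratification is trivial, and no stratum is transitive. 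What the Mackey--Glimm--Ramsay dichotomy actually gives is that each \emph{individual} orbit is locally closed (and that the orbit equivalence relation is smooth); the strata of the almost-Hausdorff filtration are unions of relatively closed orbits, not transitive pieces. Second, your phrase that the (WCP) passes to quotients $\cG(F)$ ``by functoriality'' is false without inner exactness --- Willett's HLS example (Proposition \ref{prop:HLS}) has the (WCP) while $\cG(\infty)=\F_2$ does not --- though you do use inner exactness correctly in the concrete diagram chase, so this is a misstatement rather than a broken step.

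The real gap is the final assembly: passing from ``every isotropy group $\cG(x)$ is amenable'' to ``$\cG$ is topologically amenable.'' Your proposed route (composition series of transitive subquotients plus an extension argument) collapses with the incorrect stratification, and even with the correct one you would still face non-transitive strata and the limit-ordinal/gluing issues you yourself flag. The argument that actually closes this is not a composition-series bookkeeping exercise but Renault's theorem that topological amenability is a Borel property \cite{Ren13}: since the orbit equivalence relation is smooth (hence Borel amenable) and the isotropy bundle is a Borel group bundle with amenable fibres (hence Borel amenable by \cite[Theorem 3.5]{Ren13}, exactly the ingredient already used in Proposition \ref{prop:WCPbundle}), the groupoid is Borel amenable and therefore topologically amenable. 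Without citing this input, or supplying a substitute for it, your proof establishes only that all isotropy groups are amenable, which is strictly weaker than (i).
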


\section{Open questions}
\subsection{About  amenability at infinity and inner amenability}
\begin{itemize}
\item[(1)] The notion of strong amenability at infinity has proven to be more useful than amenability at infinity. But are the two notions equivalent?  Note that by Theorem \ref{thm:equiv}
this is true for every second countable  inner  amenable \'etale groupoid.
\vspace{2mm}

\item[(2)] It would be interesting to understand better the notion of inner amenability for locally compact groupoids.  Is it invariant under equivalence of groupoids? \footnote{It is at least true for \'etale groupoids.} Are there \'etale groupoids that are not  inner amenable? In particular, if $G$ is a discrete group acting partially on a locally compact space X, is it true that the corresponding partial transformation groupoid is  inner amenable? This is true when the domains of the partial homeomorphisms are both open and closed but what happens in general? It would also be interesting to study the case of HLS groupoids.
\end{itemize}

\subsection{About exactness for groups} 
\begin{itemize}

\item[(3)] Let us denote by $[InnAmen]$ the class of locally compact inner amenable groups and by $[Tr]$ the class of groups whose reduced $C^*$-algebra has a tracial state. Groups $G$ in each of these two classes and such that $C^*_r(G)$ is nuclear are amenable \cite{AD02}, \cite{Ng}. Almost connected groups are amenable at infinity \cite[Theorem 6.8]{KW99bis}, \cite[Proposition 3.3]{AD02}. Their full $C^*$-algebras are nuclear. So they are  in $[InnAmen]$  or in $[Tr]$  if and only if they are amenable. This latter observation applies also to groups of type I.

We denote by $[\cC]$ the class of locally compact groups for which $C^*$-exactness is equivalent to KW-exactness. It contains  $[InnAmen]$ and $[Tr]$. Almost connected groups are in $[\cC]$ since they are KW-exact. The case of  groups of type I is not clear. Of course they are $C^*$-exact  but are they KW-exact?  In support of this question we point out that it is conjectured that every second countable locally compact group of type I has a cocompact amenable subgroup \cite{CKM}, a property which implies amenability at infinity \cite[Proposition 5.2.5]{AD-R}.  

It would be interesting to find more examples in the class $[\cC]$. It seems difficult to find examples not in $[\cC]$. Note that this class is  preserved  by extensions 
$$0\to N\to G\to G/N\to 0$$
 where $N$ is amenable.
Indeed since $N$ is amenable, $C^*_{r}(G/N)$ is a quotient of $C^*_{r}(G)$ (see the proof of Lemma 3.5 in \cite{CZ}). Assume that  $G$ is $C^*$-exact and that $G/N\in [\cC]$. Then the group $G/N$ is  $C^*$-exact and therefore KW-exact. It follows that $G$ is KW-exact since the class of KW-exact groups is preserved under extension \cite[Proposition 5.1]{KW99bis}.
\vspace{2mm}

\item[(4)] There are examples of non-inner amenable groups in $[Tr]$ (see \cite[Remark 2.6 (ii)]{FSW}, \cite[Example 4.15]{Man}). But are there inner amenable groups which are not in $[Tr]$? Note that the subclass $[IN]$ of $[InnAmen]$ is contained into $[Tr]$ \cite[Theorem 2.1]{FSW}. Let us recall that a locally compact group $G$ is in $[IN]$ if its identity $e$ has a compact neighborhood invariant under conjugacy. By \cite[Proposition 4.2]{Tay}, this is equivalent to the existence of a normal tracial state on the von Neumann algebra $L(G)$ of $G$. Since $C^*_{r}(G)$ is weakly dense into $L(G)$ the conclusion follows.

Let us observe that the existence of a locally compact group in $[InnAmen]\setminus [Tr]$ is equivalent to the existence  of a totally disconnected locally compact group in $[InnAmen]\setminus [Tr]$. Indeed let $G$ be a locally compact group in this set. Let $G_0$ be the connected component of the identity. Then $G_0$ is inner amenable as well as $G/G_0$ (see \cite[Corollary 3.3]{CT} and \cite[Proposition 6.2]{LP}). Since a connected inner amenable group is amenable (by \cite[Theorem 5.8]{AD02}), we see that $G_0$ is amenable. It follows that $C^*_{r}(G/G_0)$ is a quotient of $C^*_{r}(G)$ and therefore $G/G_0$ is not in $[Tr]$. 

As a consequence of this observation we are left with the following problem: does there exist a totally disconnected locally compact inner amenable group without open normal amenable subgroups?
\end{itemize}

\subsection{About exactness for groupoids}

\begin{itemize}

\item[(5)] In \cite{KW95}, Kirchberg and Wassermann have constructed examples of continuous fields of exact $C^*$-algebras on a locally compact space, whose $C^*$-algebra of continuous sections vanishing at infinity is not exact.  Are there examples  of \'etale group bundle groupoids $\cG$, whose reduced $C^*$-algebra is not exact whereas 
$$(C^*_{r}(\cG), \set{\pi_x : C^*_{r}(\cG) \to C^*_{r}(\cG(x))}_{x\in \cG^{(0)}},\cG^{(0)})$$
 is a continuous field of exact $C^*$-algebras (compare with Proposition \ref{prop:HLS})?
 
 A similar  question is asked in \cite[Question 3]{Lal17}: if $\cG$ is an inner exact  locally compact group bundle groupoid, whose fibres are  KW-exact groups, is it true that $\cG$ is  KW-exact?
\vspace{1mm}

\item[(6)]Let $\cG$ be a locally compact groupoid. We have
\begin{center}
Amenability at infinity $\xLongrightarrow{\text{(1)}}$ KW-exactness $\xLongrightarrow{\text{(2)}}$ $C^*$-exactness.
\end{center}
Let us recap what is known about the reversed arrows and what is still open.

 When $\cG$ is an \'etale inner amenable groupoids, the above three notions of exactness are equivalent (Theorem \ref{thm:equiv}). Does this fact extend to any inner amenable locally compact groupoid? Without the assumption of inner amenability, nothing is known.

As already said, it seems difficult to find an example of a locally compact group which is $C^*$-exact but not KW-exact. Could it be easier to find an example in the context of locally compact groupoids?

If $G$ is a KW-exact locally compact group, every semidirect product groupoid (relative to  a global or to a partial action) is strongly amenable at infinity \cite[Proposition 4.3, Proposition 4.23]{AD16}, and therefore KW-exact. Does KW-exactness of $X\rtimes G$ imply that $X\rtimes G$ is amenable at infinity in general?  Note that the notion of exactness for a semidirect product groupoid $X\rtimes \Gamma$, where $\Gamma$ is a discrete group is not ambiguous by Theorem \ref{thm:equiv}.

\end{itemize}

\subsection{(WCP) vs amenability}
\begin{itemize}
\item[(7)] Are there examples of inner exact groupoids $\cG$ which have the (WCP) without being amenable? By \cite{Bon} one should look for examples for which the orbit space $\cG\setminus \cG^{(0)}$  is not $T_0$. 

\vspace{2mm}

\item[(8)]   We have seen that if an \'etale  locally compact groupoid $\cG$ is  assumed to be strongly amenable at infinity, the (WCP)  implies its amenability (Theorem \ref{Kranz}).    Is it true in general, or at least  for  a semidirect product groupoid  $X\rtimes G$ with $X$ and $G$ locally compact?   Recall that this holds when  $G$ is KW-exact \cite[Theorem 5.15]{BEW20}, a property stronger than strong amenability at infinity.
\vspace{2mm}

\item[(9)]   Let $G$ be a discrete group and let $X = \partial G = \beta G\setminus G$ be its boundary equipped with  the natural action of $G$.  The weak containment property for $\partial G \rtimes G$ implies that this groupoid is amenable. Indeed the (WCP) implies that the sequence
 $$0\longrightarrow C^*_{r}(G\rtimes G) \longrightarrow  C^*_{r}( \beta G\rtimes  G)\longrightarrow  C^*_{r}(\partial G\rtimes G)\longrightarrow 0$$
is exact. Roe and Willett have proven in \cite{RW} that this exactness property implies that $G$  is exact. It follows that $G\actson \beta G$ is amenable and therefore  $G\actson \partial G$ is amenable too. Can we replace $\partial G$ by $\beta G$, that is, if $G\actson \beta G$ has the (WCP) can we deduce that $G\actson \beta G$ is amenable? This is asked in \cite[Remark 4.10]{BEW20bis}.
\end{itemize}
\vspace{0.5cm}

\noindent{\em Acknowledgements}. I thank Julian Kranz, Jean Renault and the referee for useful remarks and suggestions.

\vspace{2mm}

\noindent {\bf Addendum.}  A construction due to Suzuki \cite{Suz} gives an example of a totally disconnected locally compact inner amenable group without open normal amenable subgroups (or equivalently without tracial states), thus answering the question posed in point 7.2.(4) above.

Suzuki considers a sequence $(\Gamma_n, F_n)$ of pairs of discrete groups, where $F_n$ is a finite group acting on $\Gamma_n$ is such a way that the reduced $C^*$-algebra of the semidirect product $\Gamma_n \rtimes F_n$ is simple. Let us set $\Gamma = \bigoplus_{i=1}^{\infty}\Gamma_i$ and $K= \prod_{i=1}^\infty F_i$. Let  $K$ act  on $\Gamma$ component-wise. Then Suzuki shows that $C^*_{r}(G)$ is simple and that the Plancherel weight is the unique lsc semifinite  tracial weight on $C^*_{r}(G)$. Since  $G$ is not discrete this weight is not finite and therefore $G$ in not in $[Tr]$ (see also \cite[Remark 2.5]{FSW}).

Let us show now  that $G\in [InnAmen]$. We set $G_n = (\bigoplus_{i=1}^n \Gamma_i)\rtimes K$. Then $(G_n)$ is an increasing sequence of open subgroups of $G$ with $\bigcup_{n=1}^\infty G_n = G$. Since $G_n$ contains an open compact normal subgroup, namely $\prod_{i= n+1}^\infty F_i$ we see that there exists an inner invariant mean on $L^\infty(G_n)$ and therefore a mean $m_n$ on $L^\infty(G)$ which is invariant by conjugation under the elements of $G_n$.  Any cluster point of the sequence $(m_n)$ in $L^\infty(G)^*$ gives an inner invariant mean on $L^\infty(G)$.

\bibliographystyle{plain}

\end{document}